\newtheorem{thm}{Theorem}[section]
\newtheorem{prop}[thm]{Proposition}
\newtheorem{lem}[thm]{Lemma}
\newtheorem{cor}[thm]{Corollary}
\numberwithin{equation}{section}
\theoremstyle{remark}
\newtheorem{rem}{Remark}
\newcommand{\supp}{\text{supp }}
\newcommand{\R}{\mathbb{R}}
\newcommand{\Z}{\mathbb{Z}}
\newcommand{\ep}{\varepsilon}
\newcommand{\ls}{\lesssim}
\newcommand{\la}{\langle}
\newcommand{\ra}{\rangle}
\newcommand{\tph}{\mathbb T_h^d}
\begin{document}
\title[]
{Uniform Strichartz estimates on the lattice}

\author{Younghun Hong}
\address{Department of Mathematics, Chung-Ang University, Seoul 06974, Republic of Korea}
\email{yhhong@cau.ac.kr}

\author{Changhun Yang}
\address{Department of Mathematical Sciences, Seoul National University, Seoul 151-747, Republic of Korea}
\email{maticionych@snu.ac.kr}

\subjclass[2010]{}
\keywords{} 

\maketitle
\begin{abstract}
In this paper, we investigate Strichartz estimates for discrete linear Schr\"odinger and discrete linear Klein-Gordon equations on a lattice $h\mathbb{Z}^d$ with $h>0$, where $h$ is the distance between two adjacent lattice points. As for fixed $h>0$, Strichartz estimates for discrete Schr\"odinger and one-dimensional discrete Klein-Gordon equations are established by Stefanov-Kevrekidis \cite{SK2005}. Our main result shows that such inequalities hold uniformly in $h\in(0,1]$ with additional fractional derivatives on the right hand side. As an application, we obtain local well-posedness of a discrete nonlinear Schr\"odinger equation with a priori bounds independent of $h$. The theorems and the harmonic analysis tools developed in this paper would be useful in the study of the continuum limit $h\to 0$ for discrete models, including our forthcoming work \cite{HY} where strong convergence for a discrete nonlinear Schr\"odinger equation is addressed.
\end{abstract}

\section{Introduction}

We consider a discrete linear Schr\"odinger equation 
\begin{equation}\label{eq: discrete schrodinger}
\left\{\begin{aligned}
i\partial_tu+\Delta_h u&=0,\\
u(0)&=u_0, 
\end{aligned}\right.
\end{equation}
and a discrete linear Klein-Gordon equation
\begin{equation}\label{eq: discrete KG}
\left\{\begin{aligned}
\partial_t^2u-\Delta_h u+ m^2u&=0,\\
(u(0),\partial_tu(0))&=(u_0,u_1)
\end{aligned}\right.
\end{equation}
on a lattice domain 
$$\mathbb{Z}_h^d:=h\mathbb{Z}^d=\left\{x=hn: n\in\mathbb{Z}^d\right\}\textup{ with }h>0,$$
where
$$u=u(t,x):\mathbb{R}\times\mathbb{Z}_h^d\to\mathbb{C}.$$
We here define the discrete Laplacian by 
$$(\Delta_h u)(x)=\sum_{j=1}^d\frac{u(x+he_j)+u(x-h e_j)-2u(x)}{h^2},$$
where $\{e_j\}_{j=1}^d$ is the standard basis. In other words, we consider the harmonic oscillators interacting only with their nearest neighbors.

Discrete Schr\"odinger and discrete Klein-Gordon equations have been extensively studied in various aspects in the physics literature. 
Discrete Schr\"odinger equations describe periodic optical structures created by coupled identical single-mode linear waveguides \cite{eisenberg2002optical,peschel2002optical,sukhorukov2003spatial}. They are also closely related to nonlinear dynamics of the Bose-Einstein condensates in optical lattices \cite{cataliotti2001josephson,cataliotti2003superfluid}.
Meanwhile, discrete Klein-Gordon equations describe Fluxon dynamics in one dimension parallel array of Josephson Junctions \cite{ustinov1993fluxon}, and also arises as a model for local denaturation of DNA \cite{peyrard1989statistical}.
In \cite{Mingaleev1999}, the equations of motion of the model of DNA dynamics are reduced to the nonlocal discrete nonlinear Schr\"odinger equations, which shown rigorously to converges to fractional Schr\"odinger equations in continuum limit by \cite{KLetal-13}.
For more informations on survey and general theory of discrete equations, see \cite{kevrekidis2009discrete,kevrekidis2001discrete}.
Our focus is particularly on developing analytic tools to explore the continuum limits $h\to 0$ of the above equations. Precisely, we aim to establish inequalities quantitatively measuring decay properties of solutions, namely Strichartz estimates, but in the meantime, we also want them to hold uniformly in $h\in(0,1]$.

For $1\leq p<\infty$ (respectively, $p=\infty$), the function space $L_h^p$ consists of all complex-valued functions on $\mathbb{Z}_h^d$ satisfying 
$$\sum_{x\in \mathbb{Z}_h^d} |f(x)|^p<\infty\quad\left(\textup{respectively, }\sup_{x\in \mathbb{Z}_h^d} |f(x)|<\infty\right).$$
If $f\in L_h^p$, then its $L_h^p$-norm is defined by 
\begin{equation}\label{L^p norm}
\|f\|_{L_h^p}:=h^{\frac{d}{p}}\|f\|_{\ell_x^p(\mathbb{Z}_h^d)}=\left\{\begin{aligned}
&\left\{h^d\sum_{x\in \mathbb{Z}_h^d} |f(x)|^p\right\}^{1/p}&&\textup{if}&&1\leq p<\infty;\\
&\sup_{x\in \mathbb{Z}_h^d} |f(x)|&&\textup{if}&&p=\infty.
\end{aligned}\right.
\end{equation}
Putting the constant $h^{\frac{d}{p}}$ in the norm $\|\cdot\|_{L_h^p}$ is natural in consideration of the continuum limit $h\to 0$, since for $f\in L^p(\mathbb{R}^d)$,
$$\int_{\mathbb{R}^d} |f(x)|^p dx\approx h^d\sum_{x\in \mathbb{Z}_h^d} |f_h(x)|^p\quad\textup{as}\quad h\to 0,$$
where $f_h(x)$ denotes the average of $f$ on the $h$-cube centered at $x\in\mathbb{Z}_h^d$, i.e., 
$$f_h(x):=\frac{1}{h^d}\int_{x+h[-\frac{1}{2},\frac{1}{2}]^d}f(y) dy.$$
By the above definitions, the previously-known dispersion and Strichartz estimates for the discrete Schr\"odinger equation \eqref{eq: discrete schrodinger} are written as follows.
\begin{thm}[Stefanov-Kevrekidis \cite{SK2005}]
	$(i)$ (dispersion estimate)
	\begin{equation}\label{SK dispersion estimate}
	\|e^{it\Delta_h}u_0\|_{L_h^\infty}\leq\frac{C}{|th|^{d/3}}\|u_0\|_{L_h^1}.
	\end{equation}
	$(ii)$ (Strichartz estimates) We say that $(q,r)$ is discrete Schr\"odinger-admissible if 
	\begin{equation}\label{S-admissible}
	\frac3q+\frac {d}{r}=\frac{d}{2}, \quad 2\leq q,r\leq\infty,\quad (q,r,d)\neq (2,\infty,3).
	\end{equation}
	For any admissible pairs $(q,r)$ and $(\tilde{q},\tilde{r})$, we have
	\begin{equation}\label{SK Strichartz1}
	\| e^{it\Delta_h} u_0\|_{L_t^q(\mathbb{R};L_h^r)}\leq \frac{C}{h^{1/q}} \|u_0\|_{L_h^2}
	\end{equation}
	and
	\begin{equation}\label{SK Strichartz2}
	\left\|\int_0^t e^{i(t-s)\Delta_h}F(s)ds\right\|_{L_t^q(\mathbb{R};L_h^r)}\leq \frac{C}{h^{\frac{1}{q}+\frac{1}{\tilde{q}}}} \|F\|_{L_t^{\tilde{q}'}(\mathbb{R};L_h^{\tilde{r}'})}.
	\end{equation}	
\end{thm}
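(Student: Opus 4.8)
The plan is to reduce everything to a single oscillatory-integral estimate for the propagator kernel, and then feed the resulting dispersion bound, together with $L_h^2$-conservation, into the abstract Strichartz machinery while carefully tracking the powers of $h$. First I would write the solution of \eqref{eq: discrete schrodinger} via the Fourier transform on the lattice, which maps $L_h^2$ isometrically onto $L^2([-\pi/h,\pi/h]^d)$. The symbol of $\Delta_h$ is $p_h(\xi)=-\frac{4}{h^2}\sum_{j=1}^d\sin^2(h\xi_j/2)$, and $e^{it\Delta_h}$ becomes multiplication by $e^{itp_h(\xi)}$; in particular $\|e^{it\Delta_h}u_0\|_{L_h^2}=\|u_0\|_{L_h^2}$ by Plancherel. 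Writing the propagator as convolution against the kernel $K_t(x)=(2\pi)^{-d}\int e^{itp_h(\xi)+ix\cdot\xi}d\xi$ over the dual box, the dispersion estimate \eqref{SK dispersion estimate} reduces to $\|K_t\|_{\ell^\infty}\lesssim|th|^{-d/3}$. Rescaling $\eta=h\xi$ and writing $x=hn$, $s=t/h^2$, the kernel factorizes as a product of $d$ one-dimensional integrals $h^{-1}\int_{-\pi}^\pi e^{i(n_j\eta_j-s\phi(\eta_j))}d\eta_j$ with $\phi(\theta)=2(1-\cos\theta)$, since the symbol is a sum over coordinates.

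The heart of the matter is the one-dimensional bound $\big|\int_{-\pi}^\pi e^{i(n\theta-s\phi(\theta))}d\theta\big|\lesssim|s|^{-1/3}$, uniformly in $n\in\mathbb{Z}$. I would obtain this from van der Corput's lemma. The decisive observation is that the linear term $n\theta$ affects only the first derivative of the phase, so the higher derivatives $\Psi''(\theta)=-2s\cos\theta$ and $\Psi'''(\theta)=2s\sin\theta$ are independent of $n$ and never vanish simultaneously on $[-\pi,\pi]$. Partitioning the circle into the region $|\cos\theta|\gtrsim1$, where $|\Psi''|\gtrsim|s|$ yields $|s|^{-1/2}$ via the second-derivative test, and the region near $\theta=\pm\pi/2$, where $|\sin\theta|\gtrsim1$ forces $|\Psi'''|\gtrsim|s|$ and the third-derivative test yields $|s|^{-1/3}$, the worse exponent $|s|^{-1/3}$ dominates. (Equivalently, the integral equals $2\pi i^nJ_n(2s)$ up to a phase, and one may invoke the classical uniform Bessel bound $|J_n(z)|\lesssim|z|^{-1/3}$.) Taking the product over coordinates and unwinding the rescaling, $h^{-d}|s|^{-d/3}=|th|^{-d/3}$, which is exactly \eqref{SK dispersion estimate}.

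I expect this degenerate-critical-point analysis to be the main obstacle: the vanishing of $\phi''$ at $\theta=\pm\pi/2$ is precisely what degrades the decay from the continuum rate $|t|^{-1/2}$ to $|t|^{-1/3}$, and keeping the estimate uniform in $n$ (hence valid for the $\ell^\infty$ norm of the kernel) is what makes van der Corput, rather than a fixed stationary-phase expansion, the right tool. With the dispersion estimate $\|e^{i(t-s)\Delta_h}\|_{L_h^1\to L_h^\infty}\lesssim(h|t-s|)^{-d/3}$ and the energy bound $\|e^{it\Delta_h}\|_{L_h^2\to L_h^2}=1$ in hand, the Strichartz estimates follow from the Keel-Tao formalism, the only extra task being to propagate the factor $h^{-d/3}$. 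Interpolating the two endpoint bounds gives $\|e^{i(t-s)\Delta_h}\|_{L_h^{r'}\to L_h^r}\lesssim(h|t-s|)^{-2/q}$ for admissible $(q,r)$, since \eqref{S-admissible} is equivalent to $\frac1q=\frac d3(\frac12-\frac1r)$.

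Estimating the $TT^*$ bilinear form $\int\int\langle e^{i(t-s)\Delta_h}F(s),G(t)\rangle\,ds\,dt$ by this operator norm and applying the Hardy-Littlewood-Sobolev inequality in $t$ (the exponents match because $\frac2{q'}+\frac2q=2$) produces the constant $h^{-2/q}$, hence $\|e^{it\Delta_h}u_0\|_{L_t^qL_h^r}\lesssim h^{-1/q}\|u_0\|_{L_h^2}$, which is \eqref{SK Strichartz1}. The sharp endpoint $q=2$ (available only for $d\geq4$, where $\sigma=d/3>1$, consistent with the exclusion of $(2,\infty,3)$) is handled by the real-interpolation endpoint argument of Keel-Tao, and the inhomogeneous estimate \eqref{SK Strichartz2} follows from the same bilinear bound, now between the pairs $(q,r)$ and $(\tilde q,\tilde r)$ so that the combined factor $h^{-1/q-1/\tilde q}$ emerges, together with the Christ-Kiselev lemma to pass from the full time integral to the retarded one $\int_0^t$.
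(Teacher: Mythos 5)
Your proof is correct, and it follows essentially the argument of the cited source: this theorem is quoted from Stefanov--Kevrekidis \cite{SK2005} and is not proved in the paper itself, the original proof being exactly your route of a one-dimensional van der Corput/Bessel bound $|J_n(2s)|\lesssim |s|^{-1/3}$ at the degenerate points $\theta=\pm\pi/2$, tensored over coordinates, followed by the Keel--Tao $TT^*$ machinery with the factor $h^{-d/3}$ tracked through interpolation and Hardy--Littlewood--Sobolev. Your bookkeeping of the $h$-powers ($h^{-d}|t/h^2|^{-d/3}=|th|^{-d/3}$, then $h^{-2/q}$ in the bilinear form, hence $h^{-1/q}$ and $h^{-1/q-1/\tilde q}$) matches the stated constants, and is also consistent with how the paper later re-derives this bound by rescaling $\mathbb{Z}_h^d$ to $\mathbb{Z}^d$ in its Proposition 5.1.
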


\begin{rem}
	$(i)$ The $|t|^{-d/3}$-decay in the dispersion estimate \eqref{SK dispersion estimate} is weaker than that for the continuum equation 
	$$\| e^{it\Delta_{\mathbb{R}^d}} u_0\|_{L^\infty(\mathbb{R}^d)}\leq \frac{C}{|t|^{d/2}} \|u_0\|_{L^1(\mathbb{R}^d)}$$
	due to the lattice resonance. Indeed, a solution to a discrete Schr\"odinger equation can be written as a certain oscillatory integral (see \eqref{oscillatory integral}), but its phase function may have degenerate Hessian. Thus, it only allows a weaker dispersion estimate and Strichartz estimates with different admissibility conditions.\\
	$(ii)$ The inequalities \eqref{SK dispersion estimate}, \eqref{SK Strichartz1} and \eqref{SK Strichartz2} cannot be directly applied to the continuum limit  problems, because the constants blow up as $h\to 0$ except the trivial case $q=\tilde{q}=\infty$.
\end{rem}

The main observation of this article is that the $h$-dependence in \eqref{SK Strichartz1} and \eqref{SK Strichartz2} can be removed paying fractional derivatives on the right hand side, which compensates the lattice resonance. We also prove that putting such additional derivatives is necessary for uniform boundedness.

As for a fractional derivative, we here adopt the definition as the Fourier multiplier of symbol $|\xi|^s$, and we use the homogeneous and the inhomogeneous Sobolev norms defined by 
\begin{equation}\label{Sobolev norm}
\begin{aligned}
\|f\|_{\dot{W}_h^{s,p}}:&=\||\nabla_h|^sf\|_{L_h^p}=\left\|(|\xi|^s\hat{f})^\vee\right\|_{L_h^p},\\
\|f\|_{W_h^{s,p}}:&=\|\langle\nabla_h\rangle^sf\|_{L_h^p}=\left\|(\langle\xi\rangle^s\hat{f})^\vee\right\|_{L_h^p},
\end{aligned}
\end{equation}
where $\hat{\cdot}$ (respectively, $\check{\cdot}$) is the lattice Fourier (respectively, inverse Fourier) transform on $\mathbb{Z}_h^d$ (see Section 2). In particular, we denote $\dot{H}_h^s:=\dot{W}_h^{s,2}$ and $H_h^s:=W_h^{s,2}$. Indeed, there are several alternative ways to define Sobolev norms in a discrete setting, but they are all equivalent. 

\begin{prop}[Norm equivalence]\label{norm equivalence}
	For any $1<p<\infty$, we have
	$$\|f\|_{\dot{W}_h^{s,p}}\sim \|(-\Delta_h)^{\frac{s}{2}}f\|_{L_h^p}\quad\forall s\in\mathbb{R}$$
	and
	$$\|f\|_{\dot{W}_h^{1,p}}\sim \sum_{j=1}^d\|D_{j;h}^+f\|_{L_h^p},$$
	where
	$$D_{j;h}^+f(x):=\frac{f(x+he_j)-f(x)}{h}.$$
\end{prop}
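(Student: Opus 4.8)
The plan is to pass to the Fourier side and reduce both equivalences to the $L^p$-boundedness of a short list of explicit Fourier multipliers. First I would strip off the $h$-dependence by rescaling: the relabeling $\tilde f(n)=f(hn)$ identifies $L_h^p(\mathbb{Z}_h^d)$ with $\ell^p(\Zd)$ up to a factor $h^{d/p}$, and under it $-\Delta_h$ and $|\nabla_h|$ become $h^{-2}(-\Delta_1)$ and $h^{-1}|\nabla_1|$, so the powers of $h$ and the factors $h^{d/p}$ cancel in every ratio. Thus it suffices to prove both equivalences at $h=1$ on $\Zd$ with the standard discrete Laplacian, and the resulting constants are automatically independent of $h$. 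On $\Zd$ the lattice Fourier transform turns the operators into multiplication on the torus $\mathbb{T}^d=[-\pi,\pi]^d$: the symbol of $-\Delta_1$ is $N(\xi):=\sum_{j=1}^d(2-2\cos\xi_j)=\sum_{j=1}^d 4\sin^2(\xi_j/2)$, that of $|\nabla_1|^s$ is $|\xi|^s$, and that of $D_{j;1}^+$ is $e^{i\xi_j}-1$.

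The first equivalence then amounts to showing that the Fourier multiplier operator $T_{m_1}$ with symbol $m_1(\xi):=(N(\xi)/|\xi|^2)^{s/2}$ and its reciprocal are bounded on $\ell^p(\Zd)$, and I would establish this with the Marcinkiewicz multiplier theorem on the torus. The decisive elementary input is the Taylor expansion $N(\xi)=|\xi|^2-\tfrac1{12}\sum_j\xi_j^4+O(|\xi|^6)=|\xi|^2+O(|\xi|^4)$, which shows that $N(\xi)/|\xi|^2$ extends continuously across the origin with value $1$; combined with $N>0$ on $\mathbb{T}^d\setminus\{0\}$ this gives $0<c\le N(\xi)/|\xi|^2\le C$ on the whole torus, so $m_1$ and $m_1^{-1}$ are bounded. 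Writing $N/|\xi|^2=1+r$ with $r(\xi)=(N(\xi)-|\xi|^2)/|\xi|^2$, the order-$4$ vanishing of the numerator yields $|\partial^\alpha r(\xi)|\ls|\xi|^{2-|\alpha|}$ near the origin, and through the chain rule for $x\mapsto x^{s/2}$ on a compact subinterval of $(0,\infty)$ this produces the bounds $|\partial^\beta m_1(\xi)|\ls\prod_{j:\beta_j=1}|\xi_j|^{-1}$ for every $\beta\in\{0,1\}^d$ (and similarly for $m_1^{-1}$), which are exactly the Marcinkiewicz hypotheses.

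For the second equivalence I would argue in the same spirit. The estimate $\sum_j\|D_{j;1}^+f\|_{\ell^p}\ls\||\nabla_1|f\|_{\ell^p}$ follows from the boundedness of the multipliers $(e^{i\xi_j}-1)/|\xi|$, which are bounded since $|e^{i\xi_j}-1|=2|\sin(\xi_j/2)|\le|\xi|$ and satisfy the same Marcinkiewicz bounds. Conversely, using $\sum_k|e^{i\xi_k}-1|^2=N(\xi)$, I would write $|\nabla_1|f=\sum_{j=1}^d T_{\mu_j}D_{j;1}^+f$ with $\mu_j(\xi):=|\xi|\,\overline{(e^{i\xi_j}-1)}/N(\xi)$, so that $\sum_j\mu_j(\xi)(e^{i\xi_j}-1)=|\xi|$ identically; each $\mu_j$ is bounded because $N(\xi)\gtrsim|\xi|^2$ and $|e^{i\xi_j}-1|\le|\xi|$, and once more the Marcinkiewicz conditions hold, giving $\||\nabla_1|f\|_{\ell^p}\ls\sum_j\|D_{j;1}^+f\|_{\ell^p}$.

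The hard part will be the verification of the multiplier hypotheses, and it has two faces. The genuine analytic difficulty sits at the origin $\xi=0$, where $|\xi|^s$, $N^{s/2}$, and $e^{i\xi_j}-1$ are individually singular or non-smooth for general $s$; here one must convert the mild regularity of $N/|\xi|^2$ into sharp decay of the derivatives of $m_1$ and $\mu_j$, using the expansion of $N$ and Fa\`a di Bruno together with the two-sided bound on $N/|\xi|^2$. The second, more cosmetic, point is that $|\xi|^s$ is not smoothly periodic, so $m_1$ has a gradient jump across the faces $\xi_j=\pm\pi$ of the fundamental domain; this is harmless precisely because the Marcinkiewicz theorem asks only for integrability (bounded variation over dyadic rectangles) of the first-order-in-each-variable derivatives and not for their continuity, so the corner contributes nothing. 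By the initial rescaling all of these checks are carried out once, at $h=1$, which is what delivers the $h$-uniformity implicit in the statement.
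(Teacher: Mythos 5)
Your reduction to $h=1$ is correct and is a genuine structural difference from the paper: since the relabeling $\tilde f(n)=f(hn)$ turns the symbols of $|\nabla_h|^s$, $(-\Delta_h)^{s/2}$ and $D_{j;h}^+$ into $h^{-s}|\theta|^s$, $h^{-s}N(\theta)^{s/2}$ and $h^{-1}(e^{i\theta_j}-1)$ with $\theta=h\xi\in\mathbb{T}^d$, the equivalence constants at scale $h$ literally coincide with those at scale $1$, so $h$-uniformity comes for free. The paper does not do this; it keeps $h$ throughout and applies a uniform-in-$h$ multiplier theorem directly to the $h$-dependent symbols. Your pointwise symbol estimates (the two-sided bound on $N(\xi)/|\xi|^2$, the derivative bounds via the fourth-order vanishing of $N(\xi)-|\xi|^2$) match the paper's "direct calculations," and your converse step, based on the exact identity $\sum_j \mu_j(\xi)(e^{i\xi_j}-1)=|\xi|$ with $\mu_j=|\xi|\,\overline{(e^{i\xi_j}-1)}/N(\xi)$, is a tidy alternative to the paper's angular partition of unity $\{\chi_j\}$ and projections $\Gamma_j$.

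The gap is the engine you invoke: a "Marcinkiewicz multiplier theorem on the torus" giving $\ell^p(\mathbb{Z}^d)$-boundedness for symbols on $\mathbb{T}^d$ that are singular at the single point $0$. No such theorem is available off the shelf. The classical Marcinkiewicz theorem on $\mathbb{T}^d$ concerns the dual situation — multiplier \emph{sequences} indexed by $\mathbb{Z}^d$ acting on $L^p(\mathbb{T}^d)$ Fourier series — not symbols on the torus acting on lattice functions, and the paper states explicitly that multiplier theorems in this lattice-symbol setting are not written in the literature. Nor can you import the Euclidean theorem by de Leeuw-type transference to the subgroup $\mathbb{Z}^d\subset\mathbb{R}^d$: that route requires the $2\pi$-periodic extension $\tilde m_1$ to be an $L^p(\mathbb{R}^d)$ multiplier via the Euclidean Marcinkiewicz (or Mikhlin) hypotheses, and \emph{no non-constant periodic symbol can satisfy those hypotheses}, because the condition $|\partial^\beta m(\xi)|\lesssim\prod_{j\in\mathrm{supp}\,\beta}|\xi_j|^{-1}$ forces the derivatives to decay at infinity, whereas the derivatives of a periodic function recur with fixed size near every lattice point $2\pi k$. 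So the multiplier theorem you need must itself be proved on the lattice, and that is precisely where the paper's real work lies: the Calderón–Zygmund decomposition and weak $(1,1)$ bound on $\mathbb{Z}_h^d$ (Section 3) feeding into the lattice Hörmander–Mikhlin theorem (Theorem 4.1), which is then applied to essentially the same three quotient symbols you wrote down. Your argument becomes a complete proof once you reproduce that Calderón–Zygmund/Hörmander–Mikhlin development (your scaling step at least lets you do it at $h=1$ without tracking constants), or prove a torus-adapted Marcinkiewicz theorem from scratch; as written, the step "apply the Marcinkiewicz theorem" assumes exactly the lemma that carries the weight of the proposition.
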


Using the Sobolev norm \eqref{Sobolev norm}, our main theorem is stated as follows. 

\begin{thm}[Uniform Strichartz estimates for a discrete Schr\"odinger equation]\label{thm: Strichartz for Schrodinger}
	Let $h\in (0,1]$. For any discrete Schr\"odinger-admissible pairs $(q,r)$ and $(\tilde{q}, \tilde{r})$ (satisfying \eqref{S-admissible}), there exists $C>0$, independent of $h$, such that
	\begin{equation}\label{Strichartz estimate 1}
	\| e^{it\Delta_h} u_0\|_{L_t^q(\mathbb{R};L_h^r)}\leq C \||\nabla_h|^{\frac{1}{q}}u_0\|_{L_h^2}
	\end{equation}
	and
	\begin{equation}\label{Strichartz estimate 2}
	\left\|\int_0^t e^{i(t-s)\Delta_h}F(s)ds\right\|_{L_t^q(\mathbb{R};L_h^r)}\leq C \||\nabla_h|^{\frac1q+\frac{1}{\tilde{q}}}F\|_{L_t^{\tilde{q}'}(\mathbb{R};L_h^{\tilde{r}'})}.
	\end{equation}
	Moreover, these inequalities are optimal in the sense that the range of $(q,r)$ cannot be extended and for fixed $(q,r)$ the required derivative loss is essential, as long as $h$ uniform estimates are concerned. For a precise statements, see Proposition~\ref{Prop:optimal}.
\end{thm}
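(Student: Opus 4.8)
The plan is to reduce the uniform estimates to a single frequency-localized dispersion estimate and then run the Keel--Tao machinery scale by scale, recovering the fractional derivative $|\nabla_h|^{1/q}$ through a Littlewood--Paley summation. Write $P_\lambda$ for the Littlewood--Paley projection onto frequencies $|\xi|\sim\lambda$, where $\lambda$ ranges over dyadic numbers with $\lambda\ls h^{-1}$ (the Fourier variable on $\mathbb{Z}_h^d$ lives on the rescaled torus of sidelength $\sim h^{-1}$). The heart of the matter is the claim
\begin{equation}\label{eq:loc-disp}
\|e^{it\Delta_h}P_\lambda f\|_{L_h^\infty}\ls \lambda^{d/3}|t|^{-d/3}\|f\|_{L_h^1},
\end{equation}
uniformly in $h\in(0,1]$, in $t\neq0$, and in dyadic $\lambda\ls h^{-1}$. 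The scaling has been arranged so that at the top scale $\lambda\sim h^{-1}$ the prefactor $\lambda^{d/3}\sim h^{-d/3}$ reproduces exactly the Stefanov--Kevrekidis bound \eqref{SK dispersion estimate}, while at lower scales the constant becomes a genuine power of $\lambda$ rather than of $h$, which is precisely the source of the $h$-uniformity.

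To prove \eqref{eq:loc-disp} I would pass to the kernel. The solution is the oscillatory integral $e^{it\Delta_h}f(x)=\int e^{ix\cdot\xi-itp_h(\xi)}\hat f(\xi)\,d\xi$ with symbol $p_h(\xi)=\tfrac{4}{h^2}\sum_{j}\sin^2(h\xi_j/2)$. Changing variables $\eta=h\xi$, $y=x/h$, $\tau=t/h^2$ turns the localized kernel into $h^{-d}\int e^{iy\cdot\eta-i\tau\Phi(\eta)}\psi(\eta/\mu)\,d\eta$ with the fixed ($h$-independent) phase $\Phi(\eta)=2\sum_j(1-\cos\eta_j)$ on $[-\pi,\pi]^d$ and localization parameter $\mu:=\lambda h\ls1$. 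Thus I reduce to one single-scale oscillatory integral on a fixed torus. The Hessian of $\Phi$ is diagonal with entries $2\cos\eta_j$, degenerating exactly where some $\eta_j=\pm\pi/2$; full ($d$-fold) degeneracy occurs only at the corners $\eta=(\pm\pi/2,\dots,\pm\pi/2)$, which sit at $|\eta|\sim1$, i.e. in the top scale $\mu\sim1$. A van der Corput / stationary phase analysis, bookkeeping how many coordinates lie near $\pm\pi/2$, gives the uniform bound $\min(\mu^d,|\tau|^{-d/3})$ at the top scale and $\min(\mu^d,|\tau|^{-d/2})$ in the non-degenerate bulk. Undoing the rescaling (multiplying by $h^{-d}$) and checking the elementary pointwise inequality $\min(\lambda^d,|t|^{-d/2})\ls\lambda^{d/3}|t|^{-d/3}$ valid for all $t$ collapse both regimes to the single right-hand side of \eqref{eq:loc-disp}.

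Granting \eqref{eq:loc-disp} together with the mass conservation $\|e^{it\Delta_h}f\|_{L_h^2}=\|f\|_{L_h^2}$, I apply the abstract Keel--Tao theorem with decay exponent $\sigma=d/3$ to each piece $P_\lambda$. Since the admissibility $\tfrac3q+\tfrac dr=\tfrac d2$ is exactly Keel--Tao's condition $\tfrac1q+\tfrac{\sigma}{r}=\tfrac\sigma2$ for $\sigma=d/3$, and the homogeneous Strichartz constant scales as the $(\tfrac12-\tfrac1r)$-th power of the dispersion constant $\lambda^{d/3}$, namely $(\lambda^{d/3})^{1/2-1/r}=\lambda^{1/q}$ under admissibility, the theorem yields the per-scale estimate $\|e^{it\Delta_h}P_\lambda u_0\|_{L_t^qL_h^r}\ls\lambda^{1/q}\|P_\lambda u_0\|_{L_h^2}$. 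Summing over $\lambda$ with the $h$-uniform lattice Littlewood--Paley square-function estimate (valid for $1<r<\infty$, and the admissible range forces $r<\infty$ apart from the excluded endpoint $(2,\infty,3)$), Minkowski's inequality in $\lambda$ (legitimate since $q,r\ge2$), and the characterization $\sum_\lambda\lambda^{2/q}\|P_\lambda u_0\|_{L_h^2}^2\sim\||\nabla_h|^{1/q}u_0\|_{L_h^2}^2$, produces \eqref{Strichartz estimate 1}. The inhomogeneous bound \eqref{Strichartz estimate 2} follows the same route from the per-scale retarded estimate with constant $\lambda^{1/q+1/\tilde q}$, summing by the square function on the $L^r$ side ($r\ge2$) and by its dual on the $L^{\tilde r'}$ side ($\tilde r'\le2$), both within $(1,\infty)$.

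The main obstacle is the uniform oscillatory integral estimate \eqref{eq:loc-disp}, specifically the intermediate scales $\mu\sim1$ where $\Phi$ degenerates only in some coordinate directions: there one must interpolate carefully between the fully degenerate corner behavior $|\tau|^{-d/3}$ and the non-degenerate bulk behavior $|\tau|^{-d/2}$, keeping the van der Corput constants uniform and confirming that the localization volume together with the rescaling always collapses to the single power $\lambda^{d/3}$. The remaining ingredients, namely $h$-uniform Littlewood--Paley theory and the Keel--Tao endpoint, are standard once the lattice multiplier theorem underlying Proposition~\ref{norm equivalence} is in hand. The optimality and necessity claims are separate and deferred to Proposition~\ref{Prop:optimal}.
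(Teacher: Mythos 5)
Your proposal is correct and follows essentially the same route as the paper: the paper's proof consists precisely of a frequency-localized dispersive estimate (its Proposition 5.1, $\|e^{it\Delta_h}P_N u_0\|_{L_h^\infty}\lesssim (N/(h|t|))^{d/3}\|u_0\|_{L_h^1}$, which is your localized bound with $\lambda=N/h$), the Keel--Tao theorem applied scale by scale to produce the constant $(N/h)^{1/q}$, and the $h$-uniform Littlewood--Paley plus Minkowski summation that converts the square sum $\sum_N (N/h)^{2/q}\|P_Nu_0\|_{L_h^2}^2$ into $\||\nabla_h|^{1/q}u_0\|_{L_h^2}^2$. The only differences lie in how the localized dispersive estimate is proved. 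At the top scales, where the Hessian degenerates, the paper does no van der Corput bookkeeping at all: it rescales $h\xi\mapsto\xi$ to reduce to the unit lattice $\mathbb{Z}^d$ and invokes the Stefanov--Kevrekidis bound \eqref{SK dispersion estimate} together with uniform $L_h^1$-boundedness of the projection, so what you identify as the ``main obstacle'' is bypassed by citation; your plan of re-deriving it is workable but is essentially the content of \cite{SK2005}, and with a non-product cutoff it requires extra care. At the low scales $N\le \tfrac18$, where stationary phase gives $|t|^{-d/2}$, the paper upgrades this to $(N/h)^{d/3}|t|^{-d/3}$ by interpolating to an $L_h^{6/5}\to L_h^6$ bound and applying Bernstein on both sides, whereas you use the pointwise kernel inequality $\min(\lambda^d,|t|^{-d/2})\lesssim\lambda^{d/3}|t|^{-d/3}$; both are valid, and yours is arguably more elementary. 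One caveat: your parenthetical claim that admissibility forces $r<\infty$ apart from $(2,\infty,3)$ is false in low dimensions (e.g.\ $(q,r)=(6,\infty)$ is admissible for $d=1$ and $(3,\infty)$ for $d=2$), so the square-function summation does not literally apply to those pairs --- though this is a gap shared by the paper's own argument, whose Littlewood--Paley theorem (Theorem \ref{thm:LP}) is likewise restricted to $1<p<\infty$.
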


\begin{rem}
	$(i)$ On $\mathbb{R}^d$, combining the Sobolev inequality and the Strichartz estimates in Keel-Tao \cite{KT98}, the following Sobolev-Strichartz estimates are available,
	\begin{equation}\label{Sobolev-Strichartz estimates}
	\| e^{it\Delta} u_0\|_{L_t^{q_*}(\mathbb{R};L_x^{r_*}(\mathbb{R}^d))}\lesssim \||\nabla|^s e^{it\Delta} u_0\|_{L_t^{q_*}(\mathbb{R};L_x^{\frac{dr_*}{d-sr_*}}(\mathbb{R}^d))}\lesssim \||\nabla|^su_0\|_{L_x^2(\mathbb{R}^d)},
	\end{equation}
	where $2\leq q_*\leq\infty$, $2\leq r_*<\infty$, $0\leq s<\frac{d}{2}$ and
	$$\frac{2}{q_*}+\frac{d}{r_*}=\frac{d}{2}-s.$$
	Here, $(q_*,r_*)$ lies on the painted trapezoid in Figure~\ref{S-pair}. The Strichartz estimate \eqref{Strichartz estimate 1} corresponds to the red line in Figure~\ref{S-pair} in the ``formal" limit $h\to 0$.\\
	$(ii)$ In the Strichartz estimates \eqref{Strichartz estimate 1}, the admissible conditions \eqref{S-admissible} must be satisfied due to the weaker dispersion \eqref{SK dispersion estimate} for each $h>0$. Thus, the presence of the derivative $|\nabla|^{1/q}$ cannot be avoided in the connection to its formal continuum limit \eqref{Sobolev-Strichartz estimates}. 
\end{rem}

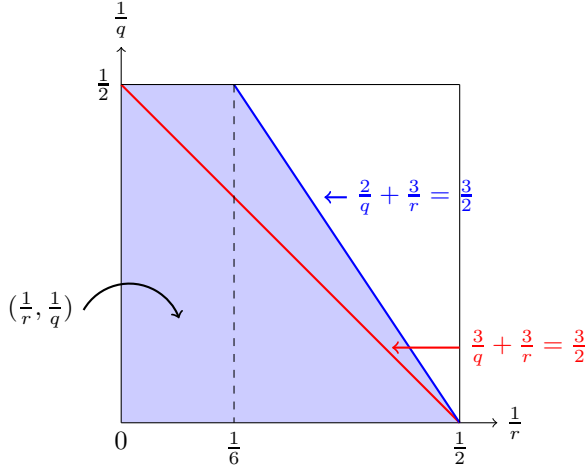
\begin{figure}\label{S-pair}
  \begin{tikzpicture}[domain=0:5] 
\fill[color=blue!20] (0,0) -- (4.5,0) -- (1.5,4.5) -- (0,4.5) ;
\draw[->] (0,0) -- (5,0) node[right] {$\frac1r$}; 
\draw[->] (0,0) -- (0,5) node[above] {$\frac1q$};
\draw[thick, color=red, domain=0:4.5]    plot (\x,-\x+4.5) ; 
\draw[thick, color=blue, domain=1.5:4.5]   plot (\x,{-1.5*\x+6.75}) ;
\draw[color=black, domain=4.5:0] plot (\x,4.5) node[left] {$\frac12$};
\draw[color=black, domain=4.5:0] plot (4.5,\x) node[below] {$\frac12$};
\draw[thick,red,<-] (3.6,1) -- (4.5,1) node[right] {$\frac3q+\frac3r=\frac32$} ;
\draw[thick,blue,<-] (2.7,3) -- (3,3) node[right] {$\frac2q+\frac3r=\frac32$} ;
\draw[dashed] (1.5,4.5) -- (1.5,0) node[below] {$\frac16$};
\node[below] at (0,0) {0};
\draw [thick,->] (-0.5,1.5) arc (150:20:20pt) node[at start,left] {$(\frac1r,\frac1q)$};
\end{tikzpicture}
\caption{Strichartz estimates $(q,r)$ pair for $d=3$. }
\end{figure}

We prove the Strichartz estimates \eqref{Strichartz estimate 1} (as well as \eqref{Strichartz estimate 2}) separating the \textit{bad} high frequency part from the \textit{good} low frequency part. Indeed, by the lattice Fourier transform, the solution $e^{it\Delta_h} u_0$ can be written as the following oscillatory integral
\begin{equation}\label{oscillatory integral}
e^{it\Delta_h}u_0(x)=\frac{1}{(2\pi)^d}\int_{\frac{2\pi}{h}[-\frac{1}{2},\frac{1}{2}]^d} e^{-i(\frac{4t}{h^2} \sum_{j=1}^d \sin^2(\frac{h\xi_i}{2})-x\cdot\xi)}\hat{u}_0(\xi)d\xi.
\end{equation}
Here, the phase function
$$\phi(\xi):=-\frac{4t}{h^2} \sum_{j=1}^d \sin^2\left(\frac{h\xi_j}{2}\right)+x\cdot\xi=-\frac{2t}{h^2} \sum_{j=1}^d (1-\cos(h\xi_j))+x\cdot\xi$$
has degenerate Hessian if and only if $\xi_j=\pm\frac{\pi}{2h}$ for some $j$ (in Figure~2, they correspond to the dashed line). For the high frequency part where the low frequencies, i.e., $\frac{2\pi}{h}[-\frac{1}{8},\frac{1}{8}]^d$, are smoothly truncated out, we reduce to the problem on $\mathbb{Z}^d$ from that on $\mathbb{Z}_h^d$ by a simple scaling argument, and make use of the previously-known result \eqref{SK Strichartz1} to get the desired bound. For the remaining low frequency part where the Hessian of the phase function is non-degenerate, we decompose the frequency domain dyadically by the Littlewood-Paley projections, and estimate each piece. Finally, summing up, we complete the proof.

\begin{figure}
	\begin{tikzpicture}
	\draw[dashed] (-1,-1) rectangle (1,1);
	\filldraw (0,0) circle (1pt)  node[below] {0};
	\draw (-2,-2) rectangle (2,2);
	\draw[<->] (0,-2.3) -- (2,-2.3) node[below, pos=0.5] {$\frac{\pi}{h} $};
	\draw[<->] (0,0) -- (1,0) node[below, pos=0.5] {$\frac{\pi}{2h} $};
	\draw [thick,->] (-2.3,1.1) arc (150:20:20pt) node[at start,left] {$\text{degenerate}$};
	\end{tikzpicture}
	\caption{Degenerate point in Fourier side for $d=2$} 
\end{figure}
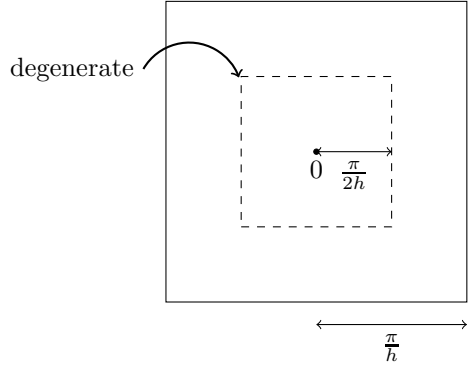

In order to sum the frequency pieces, we should employ the Littlewood-Paley inequality on a lattice $\mathbb{Z}_h^d$ that again holds uniformly in $h\in(0,1]$. However, to the best of authors' knowledge, not only the Littlewood-Paley inequality but the Calderon-Zygmund theory and the H\"ormander-Mikhlin theorem on a lattice are also not written in the literature although their proofs are pretty standard. Thus, a large portion of this paper is devoted to writing them down, which by itself would be of interest in an analysis point of view. The Calderon-Zygmund theory is also employed to prove norm equivalence (Proposition \ref{norm equivalence}) as boundedness of the Riesz transforms is proved on $\mathbb{R}^d$.

We now consider a discrete nonlinear Schr\"odinger equation (NLS)
\begin{equation}\label{eq: discrete NLS}
\left\{\begin{aligned}
i\partial_tu_h+\Delta_h u_h+\lambda |u_h|^{p-1}u_h&=0,\\
u_h(0)&=u_{h,0}, 
\end{aligned}\right.
\end{equation}
where $u_h=u_h(t,x):\mathbb{R}\times\mathbb{Z}_h^d\to\mathbb{C}$. It is not difficult to show that for fixed $h>0$ the equation \eqref{eq: discrete NLS} is globally well-posed in $L_h^2$, and that its solutions conserve the mass
	\begin{equation}\label{Conservation:Mass}
	M(u):= \|u\|_{L_h^2}^2=h^d\sum_{x\in\mathbb{Z}_h^d}|u(x)|^2  
	\end{equation}
	and the energy
	\begin{equation}\label{Conservation:Energy}
	E(u):=\frac12\|\sqrt{-\Delta_h} u\|_{L_h^2}^2+ \frac{\lambda}{p+1}\| u \|_{L_h^{p+1}}^{p+1}=h^d\sum_{x\in\mathbb{Z}_h^d}\frac{1}{2}(-\Delta_h u(x))\overline{u(x)}+\frac{\lambda}{p+1}|u(x)|^{p+1}
	\end{equation}
(see Proposition \ref{GWP}). It follows from the mass conservation law and the inequality $\|u\|_{L_h^\infty}\leq C_h\|u\|_{L_h^2}$ that solutions to \eqref{eq: discrete NLS} are bounded in $L_h^r$ for all $r\geq 2$. Nevertheless, their upper bounds may depend on $h>0$. 

As an application of Theorem \ref{thm: Strichartz for Schrodinger}, we prove that the higher $L_h^r$-norms of solutions are uniformly bounded in a time average sense. Precisely, we prove that if initial data are bounded uniformly in $h\in(0,1]$, then their solutions are bounded in the Strichartz norm
\begin{equation}\label{Strichartz norm}
\|u\|_{S_h^1(I)}:=\left\{\begin{aligned}
&\sup\left\{\|u\|_{L_t^q(I; W_h^{1-\frac{1}{q},r})}: (q,r)\textup{ satisfies }\eqref{S-admissible}\right\}&&\textup{if }d=1,2,\\
&\sup\left\{\|u\|_{L_t^q(I; W_h^{1-\frac{1}{q},r})}: (q,r)\textup{ satisfies }\eqref{S-admissible}\textup{ and }2\leq r\leq\infty^-\right\}&&\textup{if }d=3,
\end{aligned}\right.
\end{equation}
where $\infty^-$ denotes a preselected arbitrarily large number\footnote{In three dimensions, we additionally assume that $r\leq\infty^-$ just for a technical reason. Indeed, when $d=3$, the range of $\frac{1}{r}$ for admissible pairs, $(0,\frac{1}{2}]$, is not closed so the Strichartz norm may not be defined properly without restricting it to a compact interval $[\frac{1}{\infty^-},\frac{1}{2}]$.}.

\begin{thm}[Improved uniform bound]\label{thm: uniform bound}
Suppose that $0<h\leq1$ and $1\leq d\leq 3$. Let $u_h(t)\in C_t(\mathbb{R};L_h^2)$ be the global solution to NLS \eqref{eq: discrete NLS} with initial data $u_{h,0}\in L_h^2$.
\begin{enumerate}[(i)]
	\item (Improved uniform bound) If
$$\sup_{h\in(0,1]}\|u_{h,0}\|_{H_h^1}\leq R,$$
then there exists an interval $I\subset\mathbb{R}$ such that 
	\begin{equation}\label{improved uniform bound}
	\sup_{h\in(0,1]}\|u_h\|_{S^1(I)}<\infty.
	\end{equation}
	\item (Global-in-time uniform bound) Let $I_{max}$ be the maximal interval of uniform boundedness, that is, the largest interval such that \eqref{improved uniform bound} holds on any compact interval $I\subset I_{max}$. If $\lambda=1$ and $\max\{\frac{d-2}{d+2},0\}<\frac1p<1$ or if $\lambda=-1$ and $1<p<1+\frac{4}{d}$, then $I_{max}=\mathbb{R}$.
\end{enumerate}
\end{thm}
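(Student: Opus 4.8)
The plan is to prove (i) by a fixed-point argument in the Strichartz space $S^1(I)$ with all constants kept uniform in $h$ via Theorem~\ref{thm: Strichartz for Schrodinger}, and then to obtain (ii) by feeding a conservation-law bound for $\|u_h(t)\|_{H_h^1}$ back into the local theory and iterating.

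For (i), consider the Duhamel map
$$\Phi(u)(t):=e^{it\Delta_h}u_{h,0}+i\lambda\int_0^t e^{i(t-s)\Delta_h}|u(s)|^{p-1}u(s)\,ds$$
on a ball of $S^1(I)$ with $I=[-T,T]$. Applying $\langle\nabla_h\rangle^{1-\frac1q}$ and invoking \eqref{Strichartz estimate 1}, the symbol bound $|\xi|^{1/q}\langle\xi\rangle^{1-1/q}\lesssim\langle\xi\rangle$ (a H\"ormander--Mikhlin multiplier, uniform in $h$) gives $\|e^{it\Delta_h}u_{h,0}\|_{S^1(I)}\le C\|u_{h,0}\|_{H_h^1}\le CR$ with $C$ independent of $h$. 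For the Duhamel term I would use the inhomogeneous estimate \eqref{Strichartz estimate 2} with a suitable dual admissible pair to reduce matters to a Sobolev norm of the nonlinearity in a dual Strichartz space. The core is a discrete fractional chain rule, uniform in $h$,
$$\big\||\nabla_h|^{\sigma}\big(|u|^{p-1}u\big)\big\|_{L_h^{\rho}}\lesssim\|u\|_{L_h^{\rho_1}}^{p-1}\,\big\||\nabla_h|^{\sigma}u\big\|_{L_h^{\rho_2}},$$
which follows from the Littlewood--Paley/H\"ormander--Mikhlin machinery of the paper together with Proposition~\ref{norm equivalence}. Placing the differentiated factor on an admissible pair and the $p-1$ undifferentiated factors in a time--space norm via Sobolev embedding, the subcritical choice of exponents leaves a H\"older gap in time that extracts a gain $T^\theta$ with $\theta>0$. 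This yields $\|\Phi(u)\|_{S^1(I)}\le CR+CT^\theta\|u\|_{S^1(I)}^p$, together with the analogous difference estimate for $|u|^{p-1}u-|v|^{p-1}v$; hence for $T=T(R)$ small and crucially independent of $h$, $\Phi$ is a contraction. A standard uniqueness/persistence argument identifies its fixed point with the global solution $u_h$ from Proposition~\ref{GWP}, giving \eqref{improved uniform bound}. The contraction closes with $\theta>0$ precisely in the energy-subcritical range, which is exactly the range of $p$ entering (ii).

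I expect the nonlinear estimate to be the main obstacle. First, the discrete fractional chain rule must be established with $h$-uniform constants. Second, the exponents must be chosen so that both factors genuinely land on admissible pairs within the allowed range --- in particular respecting the restriction $r\le\infty^-$ when $d=3$, where the admissible set is most delicate --- and so that the derivative bookkeeping closes: the loss $|\nabla_h|^{1/q+1/\tilde q}$ in \eqref{Strichartz estimate 2} combined with the weight $\langle\nabla_h\rangle^{1-1/q}$ from the $S^1$ norm must be supplied by exactly one differentiated factor. This forces the subcriticality constraints $\frac1p<1$, and $\frac{d-2}{d+2}<\frac1p$ when $d=3$, to appear so that $\theta>0$.

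For (ii), the local time $T$ in (i) depends only on the $H_h^1$ bound of the data, so it suffices to propagate a uniform-in-$h$, uniform-in-time bound on $\|u_h(t)\|_{H_h^1}$ and iterate on subintervals of fixed length. Mass \eqref{Conservation:Mass} and energy \eqref{Conservation:Energy} are conserved. I would first check that $M(u_{h,0})$ and $E(u_{h,0})$ are uniformly bounded: the kinetic part is $\sim\|u_{h,0}\|_{\dot{H}_h^1}^2\le R^2$ by Proposition~\ref{norm equivalence}, while the potential part is controlled by a uniform discrete Gagliardo--Nirenberg inequality, valid in the stated $p$-ranges. In the \emph{defocusing} case $\lambda=1$ both energy terms are nonnegative, so $\|\sqrt{-\Delta_h}u_h(t)\|_{L_h^2}^2\le2E(u_{h,0})$ and, with mass conservation, $\sup_t\|u_h(t)\|_{H_h^1}$ is bounded uniformly in $h$. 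In the \emph{focusing} case $\lambda=-1$ with $1<p<1+\frac4d$, the potential term is mass-subcritical, so Gagliardo--Nirenberg and Young's inequality absorb it into a fraction of the kinetic term, again yielding a uniform $H_h^1$ bound. Since this bound never exceeds a fixed $R'$, any compact interval can be partitioned into subintervals of length $T(R')$ on each of which (i) applies, so \eqref{improved uniform bound} holds on every compact subset and therefore $I_{max}=\mathbb{R}$.
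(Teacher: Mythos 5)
Your part (ii) follows the paper's argument essentially verbatim (defocusing: positivity of both energy terms; focusing mass-subcritical: Gagliardo--Nirenberg absorption; then iteration of the local bound with interval length depending only on the uniform $H_h^1$ bound), and is fine. Your part (i), however, takes a different route whose central ingredient is not available: you reduce the Duhamel term to a \emph{discrete fractional chain rule}
$\||\nabla_h|^{\sigma}(|u|^{p-1}u)\|_{L_h^{\rho}}\lesssim\|u\|_{L_h^{\rho_1}}^{p-1}\||\nabla_h|^{\sigma}u\|_{L_h^{\rho_2}}$
with $h$-uniform constants, and assert that it ``follows from the Littlewood--Paley/H\"ormander--Mikhlin machinery of the paper together with Proposition~\ref{norm equivalence}.'' It does not. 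Even on $\mathbb{R}^d$ the Christ--Weinstein chain rule requires paraproduct decompositions and vector-valued maximal function estimates beyond Mikhlin multipliers and square functions; none of that is developed on $\mathbb{Z}_h^d$ in this paper, and establishing it uniformly in $h$ would be a substantial project in itself (with additional headaches in the difference estimate for the contraction when $p$ is small, since $u\mapsto|u|^{p-1}u$ is not smooth). This is a genuine gap, not a routine citation.

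The paper's proof is engineered precisely to avoid this. It does not run a contraction in $S^1(I)$ at all: the solution already exists globally by Proposition~\ref{GWP}, so only an a priori bound is needed. Applying \eqref{Strichartz estimate 2} with the dual pair $(\tilde q,\tilde r)=(\infty,2)$ (so the derivative loss $|\nabla_h|^{1/\tilde q}$ vanishes, and $|\xi|^{1/q}\langle\xi\rangle^{1-1/q}\lesssim\langle\xi\rangle$ handles the rest) puts the nonlinearity in $L_t^1(I;H_h^1)$, i.e.\ an \emph{integer-order} Sobolev norm. By Proposition~\ref{norm equivalence} this norm is equivalent to $L_h^2$ plus first differences $D_{j;h}^+$, and then the nonlinear estimate \eqref{nonlinear estimate 1} follows from the elementary fundamental-theorem-of-calculus identity applied to difference quotients --- no fractional Leibniz rule anywhere. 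The remaining factor $\|u_h\|_{L_t^{p-1}(I;L_h^\infty)}^{p-1}$ is controlled by H\"older in time plus Sobolev embedding on an admissible pair, producing the gain $\tau^{\alpha(p-1)}$, and a continuity argument closes the bound $\|u_h\|_{S^1(I)}\le 2C\|u_{h,0}\|_{H_h^1}$. If you want to salvage your approach, the cleanest fix is to adopt this same reduction: choose $(\tilde q,\tilde r)=(\infty,2)$ so that only $\||u|^{p-1}u\|_{L_t^1 H_h^1}$ is ever needed, and replace your fractional chain rule by the first-difference argument.
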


\begin{rem}
	In spite of the presence of the derivative on the right hand side in Strichartz estimates \eqref{Strichartz estimate 1} and \eqref{Strichartz estimate 2}, we can still recover the optimal local theory in the discrete setting.
\end{rem}

Finally, applying the aforementioned strategy to the discrete Klein-Gordon equation \eqref{eq: discrete KG}, we prove the following.

\begin{thm}[Strichartz estimates for the linear Klein-Gordon equation]\label{thm: Strichartz for KG}
For one dimensional discrete Schr\"odinger admissible pair $(q,r)$, there exist a constant $C$ independent of $h$ such that 
\begin{equation}
	\| e^{it\sqrt{1-\Delta_h}} u_0 \|_{L_t^q L_h^r (\R\times \Z_h)}
	\le C \| |\nabla_h|^\frac13\la \nabla_h \ra u_0 \|_{L_h^2(\Z_h)}.
\end{equation}
\end{thm}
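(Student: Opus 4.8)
The plan is to adapt the proof of Theorem~\ref{thm: Strichartz for Schrodinger} to the relativistic propagator, working directly with its oscillatory-integral representation. Writing $\omega(\xi)=\sqrt{1+\tfrac{4}{h^2}\sin^2(\tfrac{h\xi}{2})}$ for the symbol of $\sqrt{1-\Delta_h}$, we have
\begin{equation*}
e^{it\sqrt{1-\Delta_h}}u_0(x)=\frac{1}{2\pi}\int_{\frac{2\pi}{h}[-\frac{1}{2},\frac{1}{2}]}e^{i(t\omega(\xi)+x\xi)}\hat{u}_0(\xi)\,d\xi,
\end{equation*}
so everything is governed by the phase $\phi(\xi)=t\omega(\xi)+x\xi$. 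First I would record the derivatives of $\omega$; a direct computation gives
\begin{equation*}
\omega''(\xi)=\frac{h\bigl(h^2\cos(h\xi)-(1-\cos(h\xi))^2\bigr)}{\bigl(h^2+2(1-\cos(h\xi))\bigr)^{3/2}},
\end{equation*}
so the Hessian of $\phi$ degenerates exactly on the set $h^2\cos(h\xi)=(1-\cos(h\xi))^2$. Solving this for small $h$ shows that, in contrast to the Schr\"odinger phase whose critical frequencies sit at the edge scale $\xi=\pm\frac{\pi}{2h}$, the degenerate frequency for Klein-Gordon lies at the \emph{interior} scale $|\xi|\sim h^{-1/2}$, while $\omega'''$ does not vanish there. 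Locating this moving degenerate point and quantifying $\omega''$, $\omega'''$ on each frequency annulus is the first task.

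Next I would establish frequency-localized dispersive estimates by stationary phase / van der Corput. Away from the degenerate scale the second derivative is nonvanishing, so van der Corput gives $t^{-1/2}$ decay with constant $|\omega''(\lambda)|^{-1/2}$ on the annulus $|\xi|\sim\lambda$; near $|\xi|\sim h^{-1/2}$ the second derivative vanishes but the third does not, so the third-derivative van der Corput estimate yields the slower $t^{-1/3}$ decay. Since the admissibility in the statement is the discrete Schr\"odinger (that is, $t^{-1/3}$) one, I would combine each $t^{-1/2}$ bound with the trivial Bernstein bound $\|e^{it\sqrt{1-\Delta_h}}P_\lambda f\|_{L_h^\infty}\lesssim\lambda\|P_\lambda f\|_{L_h^1}$, interpolating the two fixed-time estimates so as to extract $t^{-1/3}$ decay at the price of explicit powers of $\lambda$ and $h$. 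For the top dyadic block $|\xi|\sim h^{-1}$ one may alternatively rescale $x=hy$ to the fixed lattice $\Z$, where the propagator becomes $e^{i(t/h)\sqrt{h^2-\Delta_1}}$, and invoke the Stefanov-Kevrekidis estimate \cite{SK2005}; note, however, that the mass rescales to $h$, so their fixed-mass result does not transfer with an $h$-uniform constant, and the small-mass behaviour must be tracked by hand.

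With a uniform $t^{-1/3}$ dispersive bound on each block, I would feed it into the abstract Keel-Tao machinery \cite{KT98} to obtain, for every discrete Schr\"odinger-admissible $(q,r)$,
\begin{equation*}
\|e^{it\sqrt{1-\Delta_h}}P_\lambda u_0\|_{L_t^q L_h^r}\lesssim C(\lambda,h)\,\|P_\lambda u_0\|_{L_h^2},
\end{equation*}
and then sum the blocks using the lattice Littlewood-Paley inequality established in this paper together with Minkowski's inequality (legitimate since $q,r\ge2$). Bookkeeping $C(\lambda,h)$ across the three regimes---the low frequencies $|\xi|\lesssim1$ (no loss), the degenerate scale $|\xi|\sim h^{-1/2}$ (the $|\nabla_h|^{1/3}$ loss coming from the $t^{-1/3}$ decay), and the high frequencies up to $|\xi|\sim h^{-1}$ where the mass term flattens the curvature of $\omega$ (the $\langle\nabla_h\rangle$ loss, mirroring the relativistic derivative loss already present in the continuum)---produces the weight $|\nabla_h|^{1/3}\langle\nabla_h\rangle$ on the right-hand side.

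The main obstacle I anticipate is obtaining the dispersive estimate \emph{uniformly in $h$} near the moving degenerate frequency $|\xi|\sim h^{-1/2}$: the stationary point, the vanishing locus of $\omega''$, and the size of $\omega'''$ all depend on $h$, so the van der Corput constants must be controlled uniformly, and the short-time/long-time interpolation that downgrades $t^{-1/2}$ to $t^{-1/3}$ has to be arranged so that the resulting powers of $h$ are absorbed precisely by $|\nabla_h|^{1/3}\langle\nabla_h\rangle$. This is also the step where the absence of the clean Schr\"odinger scaling (due to the mass) forces a direct stationary-phase analysis rather than a verbatim appeal to \cite{SK2005}.
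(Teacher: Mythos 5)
Your proposal is correct in outline and is essentially the paper's strategy: the oscillatory-integral representation of $e^{it\sqrt{1-\Delta_h}}P_N$, the computation of where $\omega''$ degenerates (your equation $h^2\cos(h\xi)=(1-\cos(h\xi))^2$ is exactly the paper's $\cos(h\xi_h)=1-\frac{h}{h+\sqrt{h^2+2}}$, of which the paper only uses the consequence $|\xi_h|>1$, i.e.\ your observation that the degeneracy sits at the interior scale $|\xi|\sim h^{-1/2}$), a frequency-localized dispersive bound $\|e^{it\sqrt{1-\Delta_h}}P_Nu_0\|_{L_h^\infty}\lesssim |t|^{-1/3}(\tfrac Nh)^{1/3}(1+\tfrac Nh)\|u_0\|_{L_h^1}$, and then Keel--Tao plus the lattice Littlewood--Paley summation exactly as in the Schr\"odinger theorem. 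Where you genuinely diverge is the treatment of the degenerate and high frequencies, and the paper's route is simpler than the one you sketch: it splits into only two cases. For $4N/h\le 1$ one has $|\xi|<1<|\xi_h|$, hence $|\varphi_t''|\gtrsim|t|$, and the Schr\"odinger argument (second-derivative van der Corput giving $t^{-1/2}$, then Bernstein to convert to $t^{-1/3}(\tfrac Nh)^{1/3}$) applies verbatim, as in your plan. For $4N/h\ge 1$, however, the paper does \emph{not} localize around the moving degenerate scale at all: it proves the single lower bound $|\varphi_t^{(3)}(\xi)|\gtrsim |t||\xi|^{-4}$ on the whole region and applies third-derivative van der Corput there, followed by Young's inequality over a block of width $\sim N/h$, yielding $|I_{N,t}|\lesssim t^{-1/3}(\tfrac Nh)^{4/3}$. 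Since van der Corput with the third derivative requires no vanishing of $\varphi''$, this one device dissolves the ``main obstacle'' you anticipate --- no tracking of the $h$-dependent degenerate locus, no annulus-by-annulus size of $\omega''$, and no $t^{-1/2}$-to-$t^{-1/3}$ interpolation near the degeneracy are needed. Likewise, the paper never rescales to $\Z$ and invokes Stefanov--Kevrekidis for Klein--Gordon, consistent with your correct caveat that the mass rescales to $h$. Two cautions should you execute your finer decomposition: first, the $|\nabla_h|^{1/3}$ loss is not attached to the degenerate scale alone --- it appears at every block, since matching the Schr\"odinger ($t^{-1/3}$) admissibility forces the Bernstein downgrade of $t^{-1/2}$ everywhere, so ``no loss'' at low frequencies is not quite the right bookkeeping; second, at the Brillouin-zone edge $h\xi\approx\pm\pi$ the factor $\sin(h\xi)$ makes $\varphi_t^{(3)}$ small while $\varphi_t''$ is non-degenerate there, so second- and third-derivative regions genuinely interlace at \emph{both} ends of the spectrum and your per-annulus analysis must accommodate this (a point the paper's own write-up glosses over as well).
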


\begin{rem}
	The case $d\geq 3$ is an interesting open question.
\end{rem}

\subsection{Organization of the paper}
The organization of this paper is as follows. In Section~2, we provide definition and basic properties about functions on a lattice point. 
In Section~3, we extend the Calderon-Zygmund theory on $\R^d$ to lattice $\Z_h^d$.
Then, in Section~4, H\"ormander-Mikhlin multiplier theorem and its applications, like Littlewood-Paley theorem and Sobolev norm equivalence, are established.
In Section~5, we prove the uniform Strichartz estimates with the help of the tools developed in previous sections, which is our main theorem.
In Section~6, we provide the global well-posedness for Schr\"odinger equations and obtain improved uniform bound for solutions.
In Section~7, we consider the Kelin-Gordon equation and show the uniform Strichartz estimates. 
In appendix we address the sharpness of the uniform Strichartz estimates for Schr\"odinger case.
\subsection{Acknowledgment}
This research of the first author was supported by Basic Science Research Program through the National Research Foundation of Korea(NRF) funded by the Ministry of Education (NRF-2017R1C1B1008215). The second author was supported in part by Samsung Science and Technology Foundation under Project Number SSTF-BA1702-02.

\section{Preliminaries}

In this section, we briefly introduce the preliminary $L^p$ theory, the Fourier transform and some elementary inequalities on a lattice (see also Section 17 in \cite{C-14}).

\subsection{$L_h^p$ spaces and basic inequalities}

Fix $h\in(0,1]$. For $1\leq p\leq\infty$, the $L_h^p$-space is the function space equipped with the norm $\|\cdot\|_{L_h^p}$ (see \eqref{L^p norm}). For $1\leq p<\infty$, the weak $L_h^p$-space, denoted by $L_h^{p,\infty}$, is defined as the collection of complex-valued functions such that
$$\|f\|_{L_h^{p,\infty}}:=\sup_{\lambda>0}\lambda\left|\left\{x\in\mathbb{Z}_h^d: |f(x)|\geq\lambda\right\}\right|^{1/p}<\infty,$$
where for a set $A\in\mathbb{Z}_h^d$, $|A|$ denotes the standard normalized counting measure on a lattice, i.e., $|A|=h^d\sum_{x\in A}1$. On a lattice $\mathbb{Z}_h^d$, we define the inner product by
$$ \la f, g\ra=\la f, g\ra_h:= h^d \sum_{x\in\Z_h^d} f(x)\overline{g(x)}$$
and the convolution by 
$$(f*g)(x)=(f*_hg)(x):= h^d \sum_{y\in\Z_h^d} f(x-y)g(y).$$
In the following propositions, we collect some basic inequalities and the interpolation theorems, whose proofs are omitted here because they follow from the standard arguments.

\begin{prop}
\begin{enumerate}[(i)]
	\item (H\"older inequality) If $\frac{1}{p}=\frac{1}{p_1}+\frac{1}{p_2}$, then
	\begin{align*}
	\|fg\|_{L_h^p} \le \|f\|_{L_h^{p_1}}\|g\|_{L_h^{p_2}}.
	\end{align*} 
	\item (Duality) If $1\leq p\leq\infty$, then
	$$\|f\|_{L_h^p} =\sup_{\|g\|_{L_h^{p'}}\le 1} h^d\sum_{x\in \mathbb{Z}_h^d}f(x)\overline{g(x)}.$$
	Moreover, if $1\leq p<\infty$, then $(L_h^p)^*=L_h^{p'}$, where $\frac{1}{p}+\frac{1}{p'}=1$.
	\item (Young's convolution inequality) If $1+\frac{1}{r}=\frac{1}{p}+\frac{1}{q}$, then
	\begin{align*}
	\|f*g\|_{L_h^r}\leq \|f\|_{L_h^p}\|g\|_{L_h^q}.
	\end{align*}
\end{enumerate}
\end{prop}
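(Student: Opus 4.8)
The plan is to reduce each of the three inequalities to its well-known Euclidean (or rather, classical sequence-space) analogue, tracking the factors of $h^d$ introduced by the normalization in \eqref{L^p norm}. The point is that the $L_h^p$ norm differs from the unnormalized $\ell^p(\mathbb{Z}_h^d)$ norm only by the factor $h^{d/p}$, and the lattice inner product and convolution carry explicit factors of $h^d$; so each estimate follows once the powers of $h$ are checked to match on both sides. I would handle the three parts in the order stated, since later parts reuse the duality in part (ii).

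For part (i), the H\"older inequality, I would first record the pointwise-free statement that for sequences the discrete H\"older inequality gives $\sum_x |f(x)g(x)|^p \le (\sum_x |f(x)|^{p_1})^{p/p_1}(\sum_x|g(x)|^{p_2})^{p/p_2}$ after raising to the $p$-th power; more cleanly, I apply the standard H\"older inequality for the counting measure $h^d\sum_x$ on $\mathbb{Z}_h^d$ with exponents $p_1/p$ and $p_2/p$ (which are conjugate since $\tfrac pp_1+\tfrac pp_2=1$). Concretely, write $\|fg\|_{L_h^p}^p = h^d\sum_x |f(x)|^p|g(x)|^p$ and apply H\"older with the measure $h^d\sum_x$; the factors of $h^d$ from the two factors combine to $h^{d}$, matching the single $h^d$ on the left after taking $p$-th roots. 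The endpoint case with some $p_i=\infty$ is immediate by pulling out the sup. For part (ii), the duality statement, the inequality ``$\le$'' is just H\"older with exponents $p$ and $p'$ applied to $h^d\sum_x f\bar g$, using part (i) with $p_1=p$, $p_2=p'$. For the reverse, I would exhibit the extremizer: for $1\le p<\infty$ take $g(x)=|f(x)|^{p-1}\operatorname{sgn}\overline{f(x)}/\|f\|_{L_h^p}^{p-1}$ (with the usual modification $\operatorname{sgn}\bar z=\bar z/|z|$), check directly that $\|g\|_{L_h^{p'}}=1$ and that $h^d\sum_x f\bar g=\|f\|_{L_h^p}$; the normalization $h^{d/p'}$ in $\|g\|_{L_h^{p'}}$ is exactly what is needed to offset the $h^{d(1-1/p)}=h^{d/p'}$ arising from the $(p-1)$-th power. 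The case $p=\infty$ is handled by testing against point masses $g=\delta_{x_0}/\| \delta_{x_0}\|$. The identification $(L_h^p)^*=L_h^{p'}$ for $1\le p<\infty$ then follows from this pairing together with the fact that $\mathbb{Z}_h^d$ is $\sigma$-finite, so the classical Riesz representation for $\ell^p$ transfers verbatim.

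For part (iii), Young's convolution inequality, I would again reduce to the classical statement by accounting for the $h^d$ in the definition of $*_h$. Writing $\|f*_h g\|_{L_h^r}= h^{d/r}\|f*_h g\|_{\ell^r}$ and $f*_h g = h^d (f*_{\ell} g)$ where $*_\ell$ is the unnormalized convolution on the index lattice, classical Young on $\ell^p(\mathbb{Z}^d)$ (transported to $\mathbb{Z}_h^d$) gives $\|f*_\ell g\|_{\ell^r}\le \|f\|_{\ell^p}\|g\|_{\ell^q}$ under $1+\tfrac1r=\tfrac1p+\tfrac1q$. Converting every $\ell$-norm back to an $L_h$-norm introduces $h^{-d/r}$, $h^{-d/p}$, $h^{-d/q}$ respectively; combined with the explicit $h^d$ from $*_h$, the net power of $h$ is $h^{d}\cdot h^{d/r}\cdot h^{-d/p}\cdot h^{-d/q}=h^{d(1+\frac1r-\frac1p-\frac1q)}=h^0=1$, exactly as required. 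Alternatively, and perhaps more transparently for the write-up, I would derive Young directly from the duality of part (ii) and the H\"older inequality of part (i), by pairing $f*_h g$ against a test function and applying a Minkowski/Schur-type argument, which avoids invoking the classical result as a black box.

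The steps are all routine, and indeed the paper states these proofs are omitted; the only point that requires genuine care—and thus the main (modest) obstacle—is the bookkeeping of the powers of $h$, since the normalization $h^{d/p}$ in the norm and the $h^d$ in the inner product and convolution conspire differently in each inequality. The check that these powers cancel to give $h$-independent constants is precisely what makes the lattice statements scale-covariant and is the reason the inequalities hold with constant $1$ uniformly in $h\in(0,1]$; I would present this cancellation explicitly in each part rather than leaving it implicit.
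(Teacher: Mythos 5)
Your proposal is correct and takes essentially the same route the paper intends: the authors omit the proof precisely because all three parts are the classical H\"older, duality, and Young inequalities for the normalized counting measure $h^d\sum_{x\in\mathbb{Z}_h^d}$ (of which $L_h^p$ is literally the $L^p$ space), and your explicit check that the powers of $h$ cancel — $h^{d(1+\frac1r-\frac1p-\frac1q)}=1$ in Young, and $h^{d/p'}$ offsetting the extremizer's normalization in duality — is exactly the standard bookkeeping. One trivial slip worth fixing: since the pairing is $h^d\sum_x f(x)\overline{g(x)}$, the duality extremizer should be $g=|f|^{p-2}f/\|f\|_{L_h^p}^{p-1}$ (i.e.\ $\operatorname{sgn}f$, not $\operatorname{sgn}\overline{f}$), so that $f\overline{g}=|f|^p$ pointwise; as written you get $f^2|f|^{p-2}$, though replacing $g$ by a unimodular rotation repairs this immediately.
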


\begin{prop}[Real interpolation]
Suppose that $\frac{1}{p_\theta}=\frac{\theta}{p_0}+\frac{1-\theta}{p_1}$, $\frac{1}{q_\theta}=\frac{\theta}{q_0}+\frac{1-\theta}{q_1}$ for some $\theta\in(0,1)$. Then, if a sublinear operator $T$, acting on $L_h^{p_0}+L_h^{p_1}$, satisfies 
$$\|Tf\|_{L_h^{q_j, \infty}}\leq C_j\|f\|_{L_h^{p_j}}\quad\textup{for }j=0,1,$$
then 
$$\|Tf\|_{L_h^{q_\theta}}\leq C_0^\theta C_1^{1-\theta}\|f\|_{L_h^{p_\theta}}.$$
\end{prop}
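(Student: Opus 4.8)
The plan is to run the classical Marcinkiewicz interpolation argument essentially verbatim; the only genuinely lattice-specific point is to observe that $L_h^p$ is nothing but the $L^p$-space over the $\sigma$-finite measure space $(\mathbb{Z}_h^d,\mu_h)$, where $\mu_h$ assigns mass $h^d$ to each point, so that $\mu_h(A)=|A|=h^d\sum_{x\in A}1$ is exactly the normalized counting measure introduced above. Because $\mu_h$ is $\sigma$-finite, both Tonelli's theorem and the layer-cake representation
$$\|g\|_{L_h^q}^q=q\int_0^\infty \alpha^{q-1}d_g(\alpha)\,d\alpha,\qquad d_g(\alpha):=\big|\{x\in\mathbb{Z}_h^d:|g(x)|>\alpha\}\big|,$$
are available, and the weight $h^d$ enters each side uniformly so that it cancels in the final ratio. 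Throughout I assume, as in the classical statement, the tacit hypotheses $q_0\neq q_1$ and $p_j\leq q_j$ ($j=0,1$), which hold in every application made in this paper (for instance $p_0=q_0=1$, $p_1=q_1=2$ in the Calderon-Zygmund theory).

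For the core estimate I would argue as follows. Assume without loss $p_0<p_1$, write $p=p_\theta$, $q=q_\theta$, and fix $g=Tf$. For each height $\alpha>0$ split $f=f^\alpha+f_\alpha$ at a cut level $\lambda=\lambda(\alpha)$, namely $f^\alpha:=f\,\mathbf{1}_{\{|f|>\lambda\}}$ and $f_\alpha:=f\,\mathbf{1}_{\{|f|\leq\lambda\}}$, where $\lambda(\alpha)$ is taken to be the power of $\alpha$ that matches the exponents (for $p_1<\infty$, a fixed power $\lambda=\alpha^{\gamma}$ with $\gamma$ determined by $p_j,q_j$; for the endpoint $p_1=\infty$, simply $\lambda=\alpha/(2C_1)$). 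Sublinearity gives $|Tf|\leq|Tf^\alpha|+|Tf_\alpha|$, hence $d_{Tf}(\alpha)\leq d_{Tf^\alpha}(\alpha/2)+d_{Tf_\alpha}(\alpha/2)$, and the two weak-type hypotheses yield
$$d_{Tf^\alpha}(\alpha/2)\leq\Big(\tfrac{2C_0\|f^\alpha\|_{L_h^{p_0}}}{\alpha}\Big)^{q_0},\qquad d_{Tf_\alpha}(\alpha/2)\leq\Big(\tfrac{2C_1\|f_\alpha\|_{L_h^{p_1}}}{\alpha}\Big)^{q_1}.$$
Inserting these into the layer-cake integral for $\|Tf\|_{L_h^q}^q$, expanding the truncated $L_h^{p_j}$-norms as sums of $|f|^{p_j}$ against $\mu_h$, interchanging the $\alpha$-integral with the lattice summation by Tonelli (all integrands nonnegative), and carrying out the elementary one-variable $\alpha$-integrals, the bookkeeping collapses to $\|f\|_{L_h^p}^p$ precisely because of the linear relations $\frac1p=\frac{\theta}{p_0}+\frac{1-\theta}{p_1}$ and $\frac1q=\frac{\theta}{q_0}+\frac{1-\theta}{q_1}$. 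This produces $\|Tf\|_{L_h^q}\leq A\,C_0^\theta C_1^{1-\theta}\|f\|_{L_h^p}$ with $A=A(p_0,p_1,q_0,q_1,\theta)$, the stated form with the usual convention of suppressing the exponent-dependent constant $A$.

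The endpoint $p_1=\infty$ requires only the standard one-line modification: with $\lambda=\alpha/(2C_1)$ one has $\|f_\alpha\|_{L_h^\infty}\leq\alpha/(2C_1)$, whence $\|Tf_\alpha\|_{L_h^\infty}\leq C_1\|f_\alpha\|_{L_h^\infty}\leq\alpha/2$ and therefore $d_{Tf_\alpha}(\alpha/2)=0$; only the $(p_0,q_0)$ piece survives and the integral simplifies accordingly.

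I expect no genuine analytic obstacle, since the lattice merely furnishes a particularly simple atomic $\sigma$-finite measure. The actual work is bookkeeping at three points: (i) choosing the exponent $\gamma$ in $\lambda(\alpha)$ so that each truncated norm feeds the correct weak-type bound \emph{and} the resulting $\alpha$-integrals converge, which is exactly where $q_0\neq q_1$ is used, the divergence at $q_0=q_1$ being unavoidable; (ii) justifying the Tonelli interchange, immediate here as the lattice sum is a nonnegative series; and (iii) tracking that the dependence on the input constants is through $C_0^\theta C_1^{1-\theta}$. I would write out the $p_1<\infty$ case in full and dispatch $p_1=\infty$ by the remark above.
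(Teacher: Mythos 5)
Your proposal is correct and coincides with what the paper intends: the paper explicitly omits this proof as following ``from the standard arguments,'' and what you carry out is exactly that standard Marcinkiewicz interpolation argument, with the only lattice-specific observation being (as you say) that $L_h^p$ is the $L^p$-space of the atomic $\sigma$-finite measure $\mu_h(A)=h^d\sum_{x\in A}1$, so layer-cake, Tonelli, and the truncation $f=f\mathbf{1}_{\{|f|>\lambda(\alpha)\}}+f\mathbf{1}_{\{|f|\leq\lambda(\alpha)\}}$ go through verbatim. Your tacit hypotheses $q_0\neq q_1$ and $p_j\leq q_j$, together with reading the constant $C_0^\theta C_1^{1-\theta}$ modulo a suppressed exponent-dependent factor (recoverable via the optimized cut level), are the correct interpretation of the statement and hold in every application in the paper (e.g.\ $p_0=q_0=1$, $p_1=q_1=2$ in the Calder\'on--Zygmund proof).
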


\subsection{Fourier transform}

\begin{figure}[b]
	\begin{tikzpicture}
	\foreach \x in {0,...,5}  \filldraw (\x,0) circle (0.8pt);
	\foreach \x in {0,...,5}  \filldraw (\x,1) circle (0.8pt);
	\foreach \x in {0,...,5}  \filldraw (\x,2) circle (0.8pt);
	\foreach \x in {0,...,5}  \filldraw (\x,3) circle (0.8pt);
	\foreach \x in {0,...,5}  \filldraw (\x,4) circle (0.8pt);
	\draw[thick,dashed,<->] (0,0) -- (0,1) node[left, pos=0.5] {h};
	\draw[thick,dashed,<->] (0,0) -- (1,0) node[above, pos=0.5] {h};
    \node at (3,2.7)  {$x$} ;
	\node[thick] at (2.5,-0.8)  {$\Z_h^2$} ;
	\draw[thick,<->] (6.2,2) -- (8.3,2) node[align=center,above, pos=0.5] {Fourier \\ transform};
	\draw (9.5,0) rectangle (13.5,4);
	\draw[thick,dashed,<->] (9.5,-0.2) -- (13.5,-0.2) node[below,pos=0.8] {$\frac{2\pi}{h}$};
	\draw[thick,dashed,<->] (13.7,4) -- (13.7,0) node[right,pos=0.5] {$\frac{2\pi}{h}$};
	\filldraw (11.5,2)  circle(1pt) node[below] {0};
	\node[thick] at (11.5,-0.8)  {$\mathbb{T}_h^2$} ;
	\filldraw (12,3.1) circle (0.5pt) node[below] {$\xi$};
	\end{tikzpicture}
	\caption{Domain in lattice and Fourier side for $d=2$} 	
\end{figure}
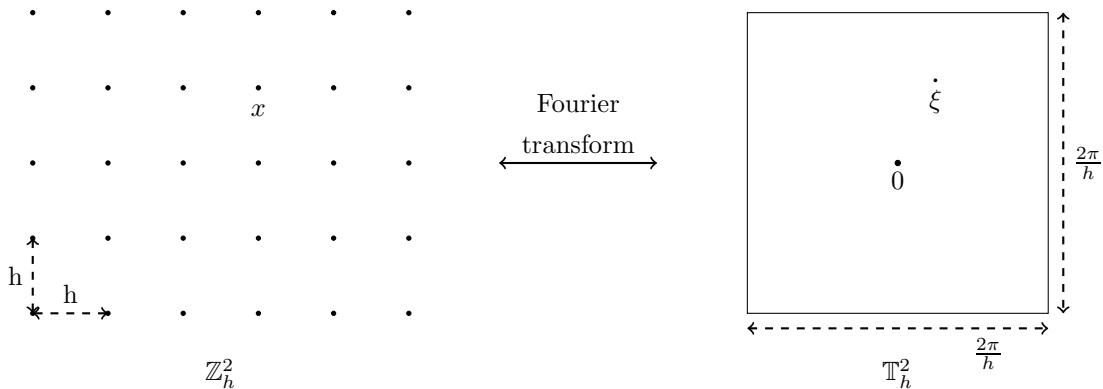

For a rapidly decreasing function $f: \mathbb{Z}_h^d\to \mathbb{C}$, that is, $|x|^k |f(x)|\leq C_k$ for all $k=0,1,2, ...$, we define its Fourier transform by
$$\hat{f}(\xi):=(\mathcal{F}_h f)(\xi)=h^d\sum_{x\in \mathbb{Z}_h^d} f(x)e^{-ix\cdot\xi},\quad\xi\in\mathbb{T}_h^d,$$
where $\tph=\frac{2\pi}{h}[-\frac{1}{2},\frac{1}{2}]^d$. The inverse Fourier transform of a smooth function $f:\mathbb{T}_h^d\to\mathbb{C}$ is defined by
$$\check{f}(x):=(\mathcal{F}_h^{-1} f)(x)=\frac{1}{(2\pi)^d}\int_{\mathbb{T}^d_h} f(\xi) e^{ix\cdot\xi}d\xi,\quad x\in \mathbb{Z}_h^d.$$
Indeed, since we here consider functions on the lattice $\mathbb{Z}_h^d$, the Fourier transform is defined in the opposite way to what is done for periodic functions as Fourier series (see Figure~3). We also note that the Fourier and the inverse Fourier transforms formally converge to those on the whole space $\mathbb{R}^d$ in the continuum limit $h\to 0$:
$$(\mathcal{F}_hf)(\xi)\to \int_{\mathbb{R}^d}f(x)e^{-ix\cdot\xi}dx,\quad\xi\in\mathbb{R}^d,$$
and
$$(\mathcal{F}_h^{-1}f)(x)\to\frac{1}{(2\pi)^d}\int_{\mathbb{R}^d} f(\xi) e^{ix\cdot\xi}d\xi,\quad x\in \mathbb{R}^d.$$

The Fourier transform (respectively, its inversion) can be extended to a larger class of functions, that is, the dual space of rapidly decreasing functions (respectively, that of smooth functions) via the duality relation 
$$\int_{\mathbb{T}_h^d} \hat{f}(\xi)\overline{g(\xi)} d\xi=h^d\sum_{x\in \mathbb{Z}_h^d} f(x)\overline{\check{g}(x)}.$$
Moreover, we have:
\begin{align*}
\widehat{f*g}(\xi)&=\hat{f}(\xi)\hat{g}(\xi)&&\textup{(Fourier transform of convolution)}\\
\|\hat{f}\|_{L^p(\mathbb{T}_h^d)}&\lesssim \|f\|_{L_h^{p'}},\quad\forall p\geq 2&&\textup{(Hausdorff-Young)}\\
\frac{1}{(2\pi)^d}\int_{\mathbb{T}_h^d}\hat{f}(\xi)\overline{\hat{g}(\xi)}d\xi&=h^d\sum_{x\in \mathbb{Z}_h^d} f(x)\overline{g(x)}&&\textup{(Parseval)}
\end{align*}

\subsection{Littlewood-Paley projections}
Let $\phi:\mathbb{R}^d\to[0,1]$ be an axisymmetric smooth bump function such that $\phi(\xi)\equiv 1$ on the square $[-1,1]^d$ but $\phi(\xi)\equiv 0$ on $\mathbb{R}^d\setminus [-2,2]^d$, and let $\varphi:=\phi-\phi(2\cdot)$. For a dyadic number $N\in2^{\mathbb{Z}}$, we denote 
$$\psi_N(\xi)=\psi_{N;h}(\xi):=\varphi\left(\frac{h\xi}{2\pi N}\right).$$
Then, we have
$$\supp\psi_{N}\subset[-\tfrac{4\pi N}{h}, \tfrac{4\pi N}{h}]^d\setminus [-\tfrac{\pi N}{h}, \tfrac{\pi N}{h}]^d$$
and
$$\sum_{N\leq 1}\psi_{N}\equiv 1\quad\textup{on }\mathbb{T}_h^d,$$
where with an abuse of notation, $\psi_{1}$ denotes the function $\psi_{1}$ restricted to the frequency domain $\mathbb{T}_h^d$.

We now define the Littlewood-Paley projection operator $P_N=P_{N;h}$ as a Fourier multiplier such that
$$\widehat{P_{N}f}(\xi)=\psi_{N}(\xi)\hat{f}(\xi)\quad\textup{on }\mathbb{T}_h^d,$$
where $\hat{\cdot}$ stands for the Fourier transform on the lattice $\mathbb{Z}_h^d$. Note that unlike the usual definition of Littlewood-Paley projections on $\mathbb{R}^d$, $\psi_N$ is not supported on an annulus on the Fourier side, because the entire Frequency space is a cube $[-\frac{\pi}{h},\frac{\pi}{h}]^d$.

As an analogue of the classical theory on the whole space $\mathbb{R}^d$, the Littlewood-Paley projections satisfy the following boundedness property.
\begin{lem}[Bernstein's inequality]\label{lem:Bernstein}
Let $h\in(0,1]$. If $1\le p\leq q\le\infty$, then for any dyadic number $N\in 2^\Z$ with $N\leq 1$, we have
\begin{align}\label{ineq:Bernstein}
\| P_N f\|_{L_h^q}  \ls \left(\frac{N}{h}\right)^{d(\frac1p-\frac1q)}\|f\|_{L_h^p},
\end{align}
where the implicit constant is independent of $N$ and $h$.
\end{lem}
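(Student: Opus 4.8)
The plan is to write $P_N$ as a convolution and apply Young's inequality, after first scaling out $h$. Since $\widehat{P_Nf}=\psi_{N}\hat f$, the operator $P_N$ is convolution on $\Z_h^d$ against the kernel $K_N:=\check\psi_{N}$, so by Young's convolution inequality, $\|P_Nf\|_{L_h^q}=\|K_N*f\|_{L_h^q}\le\|K_N\|_{L_h^s}\|f\|_{L_h^p}$, where $\tfrac1s=1-(\tfrac1p-\tfrac1q)$. As $1\le p\le q\le\infty$ forces $\tfrac1s\in[0,1]$, this is admissible, and since $1-\tfrac1s=\tfrac1p-\tfrac1q$ it suffices to prove the kernel estimate $\|K_N\|_{L_h^s}\ls(N/h)^{d(1-1/s)}$ uniformly in $N\le1$ and $h\in(0,1]$. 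To make the uniformity transparent I would first remove $h$: the rescaling $(S_hf)(n):=f(hn)$ identifies $\Z_h^d$ with $\Zd=\Z_1^d$, satisfies $\|f\|_{L_h^p}=h^{d/p}\|S_hf\|_{L_1^p}$, and (checking on the Fourier side) intertwines the projections, $S_h P_{N;h}=P_{N;1}S_h$. Feeding these two relations into one another reduces \eqref{ineq:Bernstein} to the $h=1$ statement $\|K_N\|_{\ell^s(\Zd)}\ls N^{d(1-1/s)}$ on the lattice $\Zd$, with frequency domain $\mathbb T_1^d=[-\pi,\pi]^d$.

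For the generic scales $N\le\tfrac14$ the support condition gives $\supp\psi_{N;1}\subset\mathbb T_1^d$, so in the inversion integral the domain may be enlarged to all of $\R^d$; substituting $\zeta=2\pi N\eta$ then exhibits $K_N$ as a rescaled copy of one fixed Schwartz function, $K_N(n)=c\,N^d\,\Phi(2\pi N n)$, where $\Phi$ is the inverse Fourier transform of $\varphi$ on $\R^d$. Consequently $\|K_N\|_{\ell^s}^s=c^sN^{ds}\sum_{n\in\Zd}|\Phi(2\pi Nn)|^s$, and since the sample points form a lattice of spacing $2\pi N\le2\pi$ and $\Phi$ is rapidly decreasing, a Riemann-sum bound gives $\sum_{n}|\Phi(2\pi Nn)|^s\ls(2\pi N)^{-d}\!\int_{\R^d}|\Phi|^s\ls N^{-d}$ uniformly for $0<N\le1$. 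Hence $\|K_N\|_{\ell^s}\ls N^{d(1-1/s)}$, which is the required bound.

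It then remains to treat the finitely many top scales with $\tfrac14<N\le1$, i.e. $N=\tfrac12$ and $N=1$. For $N=1$ the support of $\psi_{1;1}$ misses the open cube $(-\pi,\pi)^d$, so $P_{1;1}=0$ and nothing is needed. For $N=\tfrac12$ the truncated symbol $\psi_{1/2;1}|_{\mathbb T_1^d}$ is a single fixed function; because $\varphi$ is smooth and even its periodic extension across the faces $\zeta_j=\pm\pi$ is smooth, so the Fourier coefficients $K_{1/2}$ decay rapidly and $\|K_{1/2}\|_{\ell^s}<\infty$ for every $s\in[1,\infty]$. As $N\sim1$ here, this fixed finite constant is $\ls N^{d(1-1/s)}$. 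Combining the two regimes yields $\|K_N\|_{\ell^s(\Zd)}\ls N^{d(1-1/s)}$ for all dyadic $N\le1$, and undoing the scaling $S_h$ gives \eqref{ineq:Bernstein}.

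The crux, and the only place where uniformity in $h$ is genuinely at stake, is the band of top frequencies: there $\psi_N$ is clipped by $\partial\tph$, the rescaled-Schwartz-kernel formula breaks down, and the kernel really does depend on $h$. Reducing to $\Zd$ at the outset is precisely what converts these into the finitely many $h$-independent scales handled above; the one hands-on computation left is the rapid decay of $K_{1/2}$, i.e. the smoothness on the torus of the clipped symbol, which follows from the evenness of $\varphi$ together with the vanishing of its normal derivatives at $\eta_j=\pm1$.
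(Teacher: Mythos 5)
Your proof is correct and takes essentially the same route as the paper: Young's inequality reduces the claim to the kernel bound $\|(\psi_N)^\vee\|_{L_h^s}\lesssim (N/h)^{d(1-1/s)}$, the lattice spacing $h$ is scaled out by a change of variables, and the kernel decay comes from the smoothness of the bump (the paper's integration by parts yields $|(\psi_N)^\vee|\lesssim N^d(1+|Nx|)^{-(d+1)}$, which is exactly your sampled-rescaled-Schwartz bound), after which both arguments compare the lattice sum with an integral. The only real difference is your explicit treatment of the clipped scales $N\in\{1/2,1\}$ --- noting $P_{1;1}=0$ and deducing rapid decay of $K_{1/2}$ from smoothness of the periodized symbol --- which the paper's uniform integration-by-parts argument glosses over (its boundary terms on $\partial\mathbb{T}_h^d$ vanish for precisely the reason you give), so your version is if anything slightly more careful at that point.
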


\begin{proof}
It follows from Young's inequality that
$$\| P_N f\|_{L_h^q} =\| (\psi_{N})^\vee *f \|_{L_h^q}\lesssim \| (\psi_{N})^\vee\|_{L_h^r} \|f\|_{L_h^p},$$
where $1+\frac1q=\frac1r+\frac1p$. Thus, it suffices to show that $\| (\psi_{N})^\vee\|_{L_h^r}\lesssim \left(\frac{N}{h}\right)^{d(\frac1p-\frac1q)}=\left(\frac{N}{h}\right)^{\frac{d}{r'}}$. Indeed, by change of variables, we have
\begin{align*}
 \| (\psi_{N})^\vee\|_{L_h^r}& =\left\| \int_{\mathbb{T}_h^d} e^{ix\cdot \xi} \varphi\left(\frac{h\xi}{2\pi N}\right) d\xi \right\|_{L_h^r} =h^{\frac{d}{r}}\left(\frac{2\pi}{h}\right)^d\left\{\sum_{x\in\mathbb{Z}^d}\left|\int_{\mathbb{T}^d} e^{2\pi ix\cdot\xi} \varphi\left(\frac{\xi}{N}\right)d\xi\right|^r\right\}^{1/r}.
\end{align*}
Then, a simple integration by parts using $e^{2\pi ix\cdot\xi}=\frac{1}{2\pi ix_j}\partial_{\xi_j}e^{2\pi ix\cdot\xi}$ deduces that 
$$\left|\int_{\mathbb{T}^d} e^{2\pi ix\cdot\xi} \varphi\left(\frac{\xi}{N}\right)d\xi\right|\lesssim\frac{N^d}{(1+|Nx|)^{d+1}}$$
Therefore, we conclude that 
\begin{align*}
 \| (\psi_{N;h})^\vee\|_{L_h^r}&\lesssim h^{-\frac{d}{r'}}\left\{\int_{\R^d} \frac{N^{dr}}{(1+|Nx|)^{(d+1)r}}dx\right\}^{1/r}\sim \left(\frac{N}{h}\right)^{\frac{d}{r'}}.
\end{align*}
\end{proof}

\subsection{Sobolev embedding and Gagliardo-Nirenberg inequality}
Using Bernstein's inequality, we deduce the Gagliardo-Nirenberg inequality.

\begin{prop}[Gagliardo-Nirenberg inequality]
	Let $h\in(0,1]$. If $1\leq p\leq q\leq\infty$, $0<\theta<1$ and $\frac{1}{q}=\frac{1}{p}-\frac{\theta s}{d}$, then
	\begin{equation}\label{Inequality: Gagliardo-Nireberg}
	\|f\|_{L_h^q}\lesssim \|f\|_{L_h^p}^{1-\theta}\|f\|_{\dot{W}_h^{s,p}}^\theta,
	\end{equation}
	where the implicit constant is independent of $h$.
\end{prop}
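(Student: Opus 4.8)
The plan is to run the standard Littlewood--Paley argument for Gagliardo--Nirenberg inequalities, adapted to the lattice where the frequency variable ranges only over the bounded cube $\mathbb{T}_h^d$ and the decomposition $\sum_{N\le 1}\psi_N\equiv 1$ involves only $N\le 1$. Writing $f=\sum_{N\le 1}P_Nf$ and using the triangle inequality, it suffices to bound each $\|P_Nf\|_{L_h^q}$ by two competing quantities --- one favorable at low frequency and one favorable at high frequency --- and then to sum the dyadic pieces after choosing an optimal splitting scale.

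First I would record the low-frequency bound, which is just Bernstein's inequality (Lemma~\ref{lem:Bernstein}): since $d(\tfrac1p-\tfrac1q)=\theta s$ by the scaling relation $\tfrac1q=\tfrac1p-\tfrac{\theta s}{d}$, we get
\[
\|P_Nf\|_{L_h^q}\lesssim\Big(\tfrac{N}{h}\Big)^{\theta s}\|f\|_{L_h^p}.
\]
For the high-frequency bound I would factor $P_N=(P_N|\nabla_h|^{-s})\circ|\nabla_h|^s$. The operator $P_N|\nabla_h|^{-s}$ is a Fourier multiplier with symbol $\psi_N(\xi)|\xi|^{-s}$, supported on the annulus $|\xi|\sim N/h$ and hence nonsingular; rescaling $\xi=\tfrac{2\pi N}{h}\eta$ writes this symbol as $(N/h)^{-s}$ times a fixed smooth bump supported away from the origin. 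Repeating verbatim the Young-plus-integration-by-parts computation in the proof of Lemma~\ref{lem:Bernstein}, the kernel obeys the same $L_h^r$ estimate gaining the extra factor $(N/h)^{-s}$, so $P_N|\nabla_h|^{-s}\colon L_h^p\to L_h^q$ with norm $\lesssim(N/h)^{d(1/p-1/q)-s}=(N/h)^{-(1-\theta)s}$. Applying it to $|\nabla_h|^sf$ gives
\[
\|P_Nf\|_{L_h^q}\lesssim\Big(\tfrac{N}{h}\Big)^{-(1-\theta)s}\|f\|_{\dot{W}_h^{s,p}}.
\]

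Finally I would sum. Splitting the dyadic sum at a scale $N_0\le 1$ and using the first bound for $N\le N_0$ and the second for $N_0<N\le 1$, both geometric series converge (exponents $+\theta s>0$ and $-(1-\theta)s<0$) and are each dominated by their $N\approx N_0$ term, yielding
\[
\|f\|_{L_h^q}\lesssim\Big(\tfrac{N_0}{h}\Big)^{\theta s}\|f\|_{L_h^p}+\Big(\tfrac{N_0}{h}\Big)^{-(1-\theta)s}\|f\|_{\dot{W}_h^{s,p}}.
\]
Choosing the dyadic $N_0$ nearest to the balancing value $(N_0/h)^s\approx\|f\|_{\dot{W}_h^{s,p}}/\|f\|_{L_h^p}$ equalizes the two terms and produces $\|f\|_{L_h^p}^{1-\theta}\|f\|_{\dot{W}_h^{s,p}}^\theta$. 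The one point needing care --- and the main obstacle to uniformity in $h$ --- is that $N_0$ must lie in the admissible dyadic range $(0,1]$; since the lattice frequencies satisfy $|\xi|\lesssim h^{-1}$ one has $\|f\|_{\dot{W}_h^{s,p}}\lesssim h^{-s}\|f\|_{L_h^p}$, so the balancing scale is automatically $\lesssim 1$, and in the borderline case one simply takes $N_0=1$ and checks that the low-frequency bound already suffices. Throughout, every implicit constant comes from the $h$-uniform Bernstein estimate, so the final bound is uniform in $h\in(0,1]$.
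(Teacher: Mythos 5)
Your proof is correct and takes essentially the same route as the paper's: Littlewood--Paley decomposition, Bernstein at low frequencies, the $(N/h)^{-s}$ gain from the multiplier $\psi_N(\xi)|\xi|^{-s}$ at high frequencies, and a split at the balancing dyadic scale, with the degenerate case (balancing scale above $1$) handled by Bernstein alone --- exactly the paper's case $\|f\|_{\dot{W}_h^{s,p}}\geq h^{-s}\|f\|_{L_h^p}$ after its normalization $\|f\|_{L_h^p}=1$. One small caveat: your side remark that $\|f\|_{\dot{W}_h^{s,p}}\lesssim h^{-s}\|f\|_{L_h^p}$ always holds is not justified at the endpoints $p=1,\infty$ (it requires a uniform multiplier bound), but it is not load-bearing, since your $N_0=1$ fallback covers precisely the complementary case $\|f\|_{\dot{W}_h^{s,p}}\geq h^{-s}\|f\|_{L_h^p}$.
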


\begin{proof}
Replacing $f$ by $\frac{1}{\|f\|_{L_h^p}}f$, we may assume that $\|f\|_{L_h^p}=1$. Suppose that $\|f\|_{\dot{W}_h^{s,p}}\leq h^{-s}$. Let $R=h\|f\|_{\dot{W}_h^{s,p}}^{1/s}$. Then, we write 
	\begin{align*}
	\|f\|_{L_h^q}&\leq \sum_{N\leq R}\|P_Nf\|_{L_h^q}+\sum_{R<N\leq 1}\|P_Nf\|_{L_h^q}\\
	&=\sum_{N\leq R}\|P_Nf\|_{L_h^q}+\sum_{R<N\leq 1}\left(\frac{N}{h}\right)^{-s}\|\tilde{P}_N(|\nabla|^sf)\|_{L_h^q}
	\end{align*}
	where $\tilde{P}_N$ is the Fourier multiplier of the symbol $\widetilde{\psi_N}=\psi_{N-1}+\psi_N+\psi_{N+1}$ which is identity on the support of $\psi_N$ and supported near $\frac Nh$.
	Hence, by Bernstein's inequality (Lemma \ref{lem:Bernstein}), we prove that 
	\begin{align*}
	\|f\|_{L_h^q}&\lesssim\sum_{N\leq R}\left(\frac{N}{h}\right)^{s\theta}\|f\|_{L_h^p}+ \sum_{R\leq N\leq 1}\left(\frac{N}{h}\right)^{-s(1-\theta)} \|f\|_{\dot{W}_h^{s,p}}\\
	&\sim \left(\frac{R}{h}\right)^{s\theta}+\left(\frac{R}{h}\right)^{-s(1-\theta)} \|f\|_{\dot{W}_h^{s,p}}\sim \|f\|_{\dot{W}_h^{s,p}}^{\theta}.
	\end{align*}
	Similarly, if $\|f\|_{\dot{W}_h^{s,p}}\geq h^{-s}$, then 
	$$\|f\|_{L_h^q}\leq \sum_{N\leq 1}\|P_Nf\|_{L_h^q}\lesssim\sum_{N\leq 1} \left(\frac{N}{h}\right)^{s\theta}\|f\|_{L_h^p}\sim h^{-s\theta}=\|f\|_{\dot{W}_h^{s,p}}^\theta.$$
\end{proof}

As a consequence, we derive the Sobolev inequality except the sharp exponent.
\begin{prop}[Non-endpoint Sobolev inequality]\label{Ineq: Sobolev infinity}
	Let $h\in(0,1]$. If $1\leq p<q\leq\infty$ and $\frac{1}{q}>\frac{1}{p}-\frac{s}{d}$, then
	$$\|f\|_{L_h^q}\lesssim \|f\|_{W_h^{s,p}},$$
	where the implicit constant is independent of $h$.
\end{prop}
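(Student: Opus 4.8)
The plan is to deduce this directly from the Gagliardo--Nirenberg inequality \eqref{Inequality: Gagliardo-Nireberg} already established. First I would choose the interpolation parameter
\[
\theta:=\frac{d}{s}\left(\frac1p-\frac1q\right).
\]
The assumption $p<q$ forces $\frac1p-\frac1q>0$, while the non-endpoint hypothesis $\frac1q>\frac1p-\frac sd$ rearranges to $\frac1p-\frac1q<\frac sd$ (so in particular $s>0$); together these give $\theta\in(0,1)$ and $\frac1q=\frac1p-\frac{\theta s}{d}$, which is exactly the relation required to apply \eqref{Inequality: Gagliardo-Nireberg}. This yields
\[
\|f\|_{L_h^q}\lesssim\|f\|_{L_h^p}^{1-\theta}\|f\|_{\dot W_h^{s,p}}^{\theta}.
\]

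Next, by the weighted arithmetic--geometric mean inequality $a^{1-\theta}b^{\theta}\le (1-\theta)a+\theta b\le a+b$, the right-hand side is controlled by $\|f\|_{L_h^p}+\|f\|_{\dot W_h^{s,p}}$ with an $h$-independent constant. It therefore remains to bound each of these two norms by $\|f\|_{W_h^{s,p}}=\|\langle\nabla_h\rangle^s f\|_{L_h^p}$. Writing $f=\langle\nabla_h\rangle^{-s}(\langle\nabla_h\rangle^s f)$ and $|\nabla_h|^s f=\big(|\nabla_h|^s\langle\nabla_h\rangle^{-s}\big)(\langle\nabla_h\rangle^s f)$, this reduces to the $L_h^p$-boundedness, uniform in $h\in(0,1]$, of the two Fourier multipliers with symbols $\langle\xi\rangle^{-s}$ and $|\xi|^s\langle\xi\rangle^{-s}$ on $\mathbb T_h^d$.

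Both symbols satisfy Mikhlin-type derivative bounds uniformly in $h$, so the cleanest route is to invoke the discrete H\"ormander--Mikhlin multiplier theorem developed later in the paper; this gives $\|f\|_{L_h^p}+\|f\|_{\dot W_h^{s,p}}\lesssim\|f\|_{W_h^{s,p}}$ and completes the argument. If one prefers to stay self-contained within this section, one can instead run the estimate entirely through Littlewood--Paley: decompose $\|f\|_{L_h^q}\le\sum_{N\le 1}\|P_Nf\|_{L_h^q}$, apply Bernstein's inequality \eqref{ineq:Bernstein} to each piece, and use that on $\supp\psi_N$ one has $\langle\xi\rangle^{-s}\sim\langle N/h\rangle^{-s}$ to obtain $\|P_Nf\|_{L_h^p}\lesssim\langle N/h\rangle^{-s}\|f\|_{W_h^{s,p}}$. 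Summing, the series $\sum_{N\le1}(N/h)^{d(\frac1p-\frac1q)}\langle N/h\rangle^{-s}$ converges uniformly in $h$ precisely because $0<d(\frac1p-\frac1q)<s$.

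The main obstacle is the uniform-in-$h$ control of these multipliers, i.e. ensuring the constant does not blow up as $h\to0$. The symbol $|\xi|^s\langle\xi\rangle^{-s}$ is the delicate one, since it interpolates between the homogeneous scaling $|\xi|^s$ near the origin and bounded behaviour at the frequency cut-off; on the lattice torus $\mathbb T_h^d$ one must verify that the associated kernel has $L_h^1$-norm bounded independently of $h$ (equivalently, that the Mikhlin constants are $h$-uniform), which is exactly the kind of kernel/$L^1$ estimate carried out in the proof of Bernstein's lemma and, more systematically, in the H\"ormander--Mikhlin theory of Sections~3--4.
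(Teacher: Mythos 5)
Your proposal is correct and takes essentially the same route as the paper: the paper's proof likewise picks $\theta\in(0,1)$ with $\frac1q=\frac1p-\frac{\theta s}{d}$, applies the Gagliardo--Nirenberg inequality \eqref{Inequality: Gagliardo-Nireberg}, and then passes from $\|f\|_{L_h^p}$ and $\|f\|_{\dot W_h^{s,p}}$ to $\|f\|_{W_h^{s,p}}$ by citing \eqref{ineq:bound by inhomogeneous}, which is exactly the uniform-in-$h$ multiplier bound (via the discrete H\"ormander--Mikhlin theorem) that you invoke. The only cosmetic difference is that the paper bounds each factor of the product $\|f\|_{L_h^p}^{1-\theta}\|f\|_{\dot W_h^{s,p}}^{\theta}$ by $\|f\|_{W_h^{s,p}}$ directly instead of first using the arithmetic--geometric mean inequality.
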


\begin{proof}
	By the assumption, there exists $\theta\in(0,1)$ such that $\frac{1}{q}=\frac{1}{p}-\frac{\theta s}{d}$. Hence, it follows from the Gagliardo-Nirenberg inequality \eqref{Inequality: Gagliardo-Nireberg} that $$\|f\|_{L_h^q}\lesssim \|f\|_{L_h^p}^{1-\theta}\|f\|_{\dot{W}_h^{s,p}}^\theta\lesssim \|f\|_{W_h^{s,p}}, $$
	where the last inequality we used \eqref{ineq:bound by inhomogeneous}.
\end{proof}

\section{Calderon-Zygmund theory on a lattice}

We consider convolution operators on a lattice of the form
$$T_{K_h}f(x):=h^d\sum_{y\in\mathbb{Z}_h^d} K_h(x-y)f(y).$$
Such operators are of course bounded on $L_h^p$ by Young's inequality, because there is no singular kernel on a lattice. However, getting a uniform-in-$h$ bound is not so obvious. In this section, we show boundedness of convolution operators satisfying the hypotheses similar to those for Calderon-Zygmund operators in the formal limit $h\to 0$, extending the Calderon-Zygmund theory on the Euclidean space $\mathbb{R}^d$ 

\begin{thm}[Calder\'on-Zygmund]\label{thm:CZ}
Suppose that for all $h\in(0,1]$,  
\begin{equation}\label{kernel1}
|\hat{K}_h(\xi)| \le A\quad\textup{for all }\xi\in\mathbb{T}_h^d,
\end{equation}
\begin{equation}\label{kernel2}
h^d\sum_{y\in\mathbb{Z}_h^d, |y|\geq 2|x|} |K_h(y-x)-K_h(y)| \le B.
\end{equation}
Then, for $1<p<\infty$, there exists $C_p>0,$ independent of $h\in(0,1]$, such that
\begin{equation}\label{eq: CZ thm}
\|T_{K_h}f\|_{L_h^p}\le C_p\|f\|_{L_h^p}.
\end{equation}
\end{thm}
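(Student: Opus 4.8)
The plan is to run the classical Calder\'on--Zygmund machine adapted to the lattice, taking care that every constant is independent of $h$. Throughout I write $\int_A\,=\,h^d\sum_{x\in A}$ for the normalized counting measure, so that $\int_A|f|=\|f\mathbf{1}_A\|_{L_h^1}$. First I would record the $L_h^2$ bound: since $\widehat{T_{K_h}f}=\hat K_h\hat f$, Parseval together with \eqref{kernel1} gives $\|T_{K_h}f\|_{L_h^2}\le A\|f\|_{L_h^2}$, uniformly in $h$. The rest of the strategy is standard: establish a weak-type $(1,1)$ bound with an $h$-independent constant, interpolate it with the $L_h^2$ bound using the real interpolation theorem to obtain \eqref{eq: CZ thm} for $1<p\le 2$, and then dualize to reach $2<p<\infty$.

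For the Calder\'on--Zygmund decomposition I would use dyadic cubes on $\mathbb{Z}_h^d=h\mathbb{Z}^d$ of side length $2^kh$ with $k\ge 0$; the smallest ones are the singletons $\{x\}$, of measure $h^d$. Given $f\in L_h^1$ and $\lambda>0$, since averages over growing cubes tend to $0$, I select the maximal dyadic cubes $\{Q_j\}$ with $\frac{1}{|Q_j|}\int_{Q_j}|f|>\lambda$; maximality and the parent estimate give $\lambda<\frac{1}{|Q_j|}\int_{Q_j}|f|\le 2^d\lambda$. A pleasant simplification peculiar to the lattice is that for $x\notin\bigcup_jQ_j$ the singleton $\{x\}$ is itself an unselected dyadic cube, whence $|f(x)|=\frac{1}{|\{x\}|}\int_{\{x\}}|f|\le\lambda$ with no appeal to a differentiation theorem. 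I then write $f=g+b$, where $g=f$ off $\bigcup_jQ_j$ and $g=\frac{1}{|Q_j|}\int_{Q_j}f$ on each $Q_j$, and $b=\sum_jb_j$ with $b_j=\big(f-\tfrac{1}{|Q_j|}\int_{Q_j}f\big)\mathbf{1}_{Q_j}$. This yields $\|g\|_{L_h^\infty}\le 2^d\lambda$, $\|g\|_{L_h^1}\le\|f\|_{L_h^1}$, each $b_j$ of zero mean, and $\sum_j|Q_j|\le\lambda^{-1}\|f\|_{L_h^1}$.

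Next I split $|\{|T_{K_h}f|>\lambda\}|$ into the contributions of $g$ and $b$. For $g$, Chebyshev, the $L_h^2$ bound, and $\|g\|_{L_h^2}^2\le\|g\|_{L_h^\infty}\|g\|_{L_h^1}\le 2^d\lambda\|f\|_{L_h^1}$ give a bound $\lesssim\lambda^{-1}\|f\|_{L_h^1}$. For $b$, let $Q_j^*$ be a fixed dilate of $Q_j$ chosen so that $x\notin Q_j^*$ forces $|x-y_j|\ge 2|y-y_j|$ for every $y\in Q_j$, where $y_j$ is the center; since $|Q_j^*|\lesssim|Q_j|$ uniformly in $h$, the set $\Omega^*=\bigcup_jQ_j^*$ satisfies $|\Omega^*|\lesssim\lambda^{-1}\|f\|_{L_h^1}$. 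Off $\Omega^*$ the zero-mean property lets me insert $K_h(x-y_j)$ and write $T_{K_h}b_j(x)=h^d\sum_{y\in Q_j}[K_h(x-y)-K_h(x-y_j)]b_j(y)$; summing $|K_h(x-y)-K_h(x-y_j)|$ over $x\notin Q_j^*$ is then controlled, via the substitution matching the shift $y-y_j$ to the hypothesis, exactly by the H\"ormander condition \eqref{kernel2}, so that $\int_{(\Omega^*)^c}|T_{K_h}b_j|\le B\|b_j\|_{L_h^1}$. Summing in $j$ and applying Chebyshev on $(\Omega^*)^c$, together with the trivial bound $|\Omega^*|$, closes the weak-$(1,1)$ estimate. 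I expect this bad-part step to be the main obstacle: the whole argument rests on the cancellation from $\int b_j=0$ combined with \eqref{kernel2}, and the delicate point is to align the lattice geometry of the dilated cubes $Q_j^*$ with the precise form of \eqref{kernel2} so that the resulting constant is genuinely independent of $h$.

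Finally, real interpolation between the weak-$(1,1)$ and the strong $L_h^2$ bounds yields \eqref{eq: CZ thm} for $1<p\le 2$, uniformly in $h$. To reach $2<p<\infty$ I would note that $T_{K_h}^*$ is convolution against $\tilde K_h(x)=\overline{K_h(-x)}$, whose multiplier is $\overline{\hat K_h}$, so that \eqref{kernel1} holds with the same $A$; moreover the H\"ormander sum for $\tilde K_h$ equals $h^d\sum_{|y|\ge 2|x|}|K_h(x-y)-K_h(-y)|$, which under $y\mapsto -y$, $x\mapsto -x$ reduces to \eqref{kernel2} with the same $B$. Hence $T_{K_h}^*$ is bounded on $L_h^{p'}$ for $1<p'\le 2$, and by duality $T_{K_h}$ is bounded on $L_h^p$ for $2\le p<\infty$, completing the proof.
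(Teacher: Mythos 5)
Your proposal is correct and follows essentially the same route as the paper's proof: the $L_h^2$ bound from Plancherel and \eqref{kernel1}, a Calder\'on--Zygmund decomposition by maximal dyadic lattice cubes (with the same observation that singleton cubes replace the differentiation theorem), the good/bad split with the zero-mean cancellation and \eqref{kernel2} applied off dilated cubes, and finally real interpolation plus duality. If anything, you are slightly more careful than the paper at two points --- choosing the dilate $Q_j^*$ so that $x\notin Q_j^*$ genuinely forces $|x-y_j|\ge 2|y-y_j|$, and verifying that the adjoint kernel satisfies both hypotheses with the same constants --- which the paper leaves implicit.
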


We prove Theorem \ref{thm:CZ} following the standard argument for instance in \cite{D-01}, involving the dyadic maximal function. We fix $h\in(0,1]$. For each dyadic number $N\in 2^{\mathbb{Z}}$ with $N\ge1$, let $\mathcal{Q}_N=\mathcal{Q}_{N;h}$ be the family of cubes, open on the right, whose vertices are adjacent points of the lattice $hN\Z^d$. Given $f\in L_h^1$, averaging over each cube in $\mathcal{Q}_N$, we introduce the average function  
$$E_Nf(x)=E_{N;h} f(x):=\sum_{Q\in \mathcal{Q}_{N} } \left( \frac{1}{N^d}\sum_{y\in Q}f(y) \right)\mathbf{1}_{Q}(x).$$
Note that $\mathcal{Q}_{1}\approx\mathbb{Z}_h^d$ and $E_{1}f(x)=f(x)$. Next, we define the dyadic maximal function by 
\begin{align*}
M f(x)=M_hf(x):=\sup_{N\ge 1} |E_N f(x)|.
\end{align*}
Using this maximal function, we decompose the domain of a function. 

\begin{thm}[Calder\'on-Zygmund decomposition]
Given non-negative $f \in L_h^1$ and $\lambda>0$, there exists a collection $\{ Q_k\}_k$ of disjoint dyadic cubes such that  	
\begin{enumerate}\label{C-Z decomposition}
\item $f(x)\le \lambda$ for every $\displaystyle x \notin \bigsqcup_k Q_k$;
\item $\displaystyle\big| \bigsqcup_k Q_k \big| \le \frac1\lambda \| f \|_{L_h^1}$;
\item $\displaystyle\lambda <\frac{1}{|Q_k|}\cdot h^d\sum_{x\in Q_{k}}f(x) \le 2^d\lambda.$
\end{enumerate}
\end{thm}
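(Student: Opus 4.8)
The plan is to run the standard dyadic stopping-time selection, using the lattice scale $N=1$ as a natural bottom level in place of the ``letting cubes shrink to points'' step on $\mathbb{R}^d$. First I would record two structural facts about the grids $\mathcal{Q}_N$. Since the vertex sets $h(2N)\mathbb{Z}^d\subset hN\mathbb{Z}^d$ are nested, each cube of $\mathcal{Q}_{2N}$ is the disjoint union of exactly $2^d$ cubes of $\mathcal{Q}_N$; hence any two dyadic cubes (of possibly different scales) are either nested or disjoint, and every $Q\in\mathcal{Q}_N$ has a unique parent $\tilde Q\in\mathcal{Q}_{2N}$ with $Q\subset\tilde Q$ and $|\tilde Q|=2^d|Q|$. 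Second, for $f\geq 0$ in $L_h^1$ and any $Q\in\mathcal{Q}_N$ one has the crude bound
$$\frac{1}{N^d}\sum_{y\in Q}f(y)\leq \frac{1}{N^d}\,h^{-d}\|f\|_{L_h^1}=\frac{1}{(hN)^d}\|f\|_{L_h^1},$$
so averages tend to $0$ as $N\to\infty$; in particular, once $(hN)^d>\lambda^{-1}\|f\|_{L_h^1}$, every cube of $\mathcal{Q}_N$ has average at most $\lambda$.

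Given these, I would declare a dyadic cube $Q$ \emph{selected} if its average $\frac{1}{|Q|}h^d\sum_{x\in Q}f(x)$ exceeds $\lambda$ and $Q$ is maximal with this property, i.e.\ no strictly larger dyadic cube containing it has average exceeding $\lambda$. The decay bound guarantees that every cube with average exceeding $\lambda$ is contained in such a maximal one (its chain of ancestors eventually drops below $\lambda$), so the union of selected cubes is precisely $\{M f>\lambda\}$; call the selected family $\{Q_k\}$. These are pairwise disjoint, since two maximal cubes that were nested would coincide. This is the collection claimed in the statement.

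It then remains to verify the three properties, each a direct consequence of the selection. For property (3), the lower bound is the defining inequality $\frac{1}{|Q_k|}h^d\sum_{x\in Q_k}f(x)>\lambda$; for the upper bound I use that the parent $\tilde Q_k$ is not selected, hence has average at most $\lambda$, so (using $f\geq 0$)
$$\frac{1}{|Q_k|}h^d\sum_{x\in Q_k}f(x)\leq \frac{1}{|Q_k|}h^d\sum_{x\in\tilde Q_k}f(x)=2^d\cdot\frac{1}{|\tilde Q_k|}h^d\sum_{x\in\tilde Q_k}f(x)\leq 2^d\lambda.$$
For property (2), the lower bound in (3) rearranges to $|Q_k|<\lambda^{-1}h^d\sum_{x\in Q_k}f(x)$, and summing over the disjoint family gives $\big|\bigsqcup_k Q_k\big|<\lambda^{-1}h^d\sum_{x\in\mathbb{Z}_h^d}f(x)=\lambda^{-1}\|f\|_{L_h^1}$. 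For property (1), if $x\notin\bigsqcup_k Q_k$ then no cube containing $x$ has average exceeding $\lambda$ (otherwise $x$ would lie in a maximal such cube); applying this at the bottom scale $N=1$, where the cube through $x$ is the singleton with average $E_1f(x)=f(x)$, yields $f(x)\leq\lambda$.

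The argument is essentially routine; the only genuinely lattice-specific points, and the places I would be most careful, are the existence of a largest scale at which an average can exceed $\lambda$ (which rests on $f\in L_h^1$ and is exactly what makes ``maximal selected cube'' well defined, replacing the $\mathbb{R}^d$ stopping time) and the clean identification $E_1f=f$ at the bottom scale $N=1$, which is what forces property (1) to hold pointwise for \emph{every} $x$ rather than merely almost everywhere.
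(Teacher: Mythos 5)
Your proposal is correct and follows essentially the same route as the paper: the maximal-dyadic-cube (stopping-time) selection, which the paper merely packages differently by writing $\{Mf>\lambda\}=\bigsqcup_N\Omega_N$ (the level sets of the stopping scale) and then splitting each $\Omega_N$ into cubes of $\mathcal{Q}_N$, after which the verification of (1)--(3) — including $E_1f=f$ at the bottom scale for (1) and the parent cube for the upper bound in (3) — is identical to yours. One wording nit: the parent's average is $\le\lambda$ because of the maximality of $Q_k$ (which your definition of ``selected'' does provide), not because the parent is ``not selected,'' since non-selection alone would also be consistent with a large average at a non-maximal cube.
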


\begin{proof}
In order to construct the desired dyadic cubes, we claim that 
\begin{equation}\label{CZ decomposition proof}
\{ x \in \Z_h^d : M f(x) >\lambda \} = \bigsqcup_N \Omega_N,
\end{equation}
where 
\begin{align*}
\Omega_N=\{ x\in \Z_h^d : E_{N}f(x) >\lambda \ \text{and} \ E_{M}f (x) \le \lambda  \ \text{if} \ M>N \}.
\end{align*}
Note that for $x\in\Omega_N$, $N$ is the smallest dyadic numbers such that $E_Nf(x)>\lambda$. Hence, it is obvious that $\Omega_N$'s are disjoint each other. The inclusion $\supset$ in \eqref{CZ decomposition proof} is trivial by the definition of the maximal function. For the opposite inclusion, we observe that $E_{N}f \rightarrow 0$ as $N \rightarrow\infty$, because $f \in L_h^1$. Thus, for any $x\in\mathbb{Z}_h^d$ with $Mf(x)>\lambda$, there exists $N$ such that $x\in\Omega_N$.

By the definition, $E_Nf(x)$ has the same value on a dyadic cube containing $x$. Thus, each $\Omega_N$ can be decomposed into disjoint cubes contained in $\mathcal Q_{N,h}$. Thus, collecting all disjoint dyadic cubes, we may write 
$$\{ x \in \Z_h^d : M f(x) >\lambda \} = \bigsqcup_k Q_k.$$

For $(1)$, we observe that if $x\notin \bigsqcup_k Q_k$, then $E_Nf(x)\leq\lambda $. In particular, $E_{1}f(x)=f(x)\leq\lambda$. For $(2)$, we use the decomposition \eqref{CZ decomposition proof} to get 
$$|\{ x\in \Z_h^d : M f(x)>\lambda \}|=\sum_{N} |\Omega_N| =\sum_{N}h^d\sum_{x\in \Omega_N}1
\le \sum_{N} \frac{h^d}{\lambda}\sum_{x\in\Omega_N} E_{N} f (x).$$
If $\Omega_N=\bigsqcup_{k_N} Q_{k_N}$, then $E_Nf(x)$ has the same value on $Q_{k_N}$, which is $\frac{1}{N^d}\sum_{y\in Q_{k_N}}f(y)$. Thus, 
\begin{align*}
|\{ x\in \Z_h^d : M f(x)>\lambda \}|&\le \frac{h^d}{\lambda}\sum_{N} \sum_{k_N}\sum_{x\in Q_{k_N}} E_{N} f (x)= \frac{h^d}{\lambda}\sum_{N} \sum_{k_N}\sum_{x\in Q_{k_N}} \frac{1}{N^d}\sum_{y\in Q_{k_N}}f(y)\\
&= \frac{h^d}{\lambda}\sum_{N} \sum_{k_N}\sum_{y\in Q_{k_N}}f(y)= \frac{h^d}{\lambda}\sum_{y\in \bigsqcup \Omega_N}f(y)\leq\frac{1}{\lambda}\|f\|_{L_h^1}.
\end{align*}
It remains to show $(3)$. By the definition of the sets $\Omega_N$, the average of $f$ over $Q_k$ is greater than $\lambda$. Let $2Q_k$ be the dyadic cube containing $Q_k$ whose sides are twice as long. Then, the average of $f$ over $2Q_k$ is at most $\lambda$. Therefore, we prove that
\begin{align*}
\frac{h^d}{|Q_k|}\sum_{x\in Q_{k}}f(x)
\le \frac{h^d}{|Q_k|}\sum_{x\in 2Q_k}f(x)=2^d\cdot\frac{h^d}{|2Q_k|}\sum_{x\in 2Q_k}f(x)
\le 2^{d}\lambda.
\end{align*}
\end{proof}

Now we are ready to show Theorem \ref{thm:CZ}.

\begin{proof}[Proof of Theorem \ref{thm:CZ}]
By the Plancherel theorem with the bound \eqref{kernel1}, $T_{K_h}$ is bounded on $L_h^2$. Thus, it suffices to show that for arbitrary non-negative $f\in L_h^1$ and $\lambda>0$, 
\begin{equation}\label{CZ proof'}
\left|\left\{x\in\mathbb{Z}_h^d: T_{K_h}f(x)\geq\lambda\right\}\right|\leq\frac{C}{\lambda}\|f\|_{L_h^1}.
\end{equation}
Indeed, \eqref{CZ proof'} implies that $\|T_{K_h} f\|_{L_h^{1,\infty}}\leq C\|f\|_{L_h^1}$. Consequently by interpolation, \eqref{eq: CZ thm} holds for $1<p\leq 2$, and then for $2<p<\infty$ by duality. 

To show \eqref{CZ proof'}, applying the Calder\'on-Zygmund decomposition (Theorem~\ref{C-Z decomposition}) to given $f\in L_h^1$ and $\lambda>0$, we obtain the collection of disjoint dyadic cubes $\{Q_k\}_k$ with the desired properties, and then we decompose $f$ into the good function $g$ and the bad functions $b_k$'s:
$$f=g+b=g+\sum_k b_k$$
such that 
$$g(x)=\left\{\begin{aligned}
&f(x)&& \text{if} \  x\notin \bigsqcup_k Q_k,\\
&\frac{1}{|Q_k|} h^d\sum_{y\in Q_k} f (y) &&\text{if} \ x\in Q_k,
\end{aligned}\right.$$
and
$$b_k(x)=\left( f(x) - \frac{1}{|Q_k|} h^d\sum_{ y\in Q_k} f(y) \right)\mathbf{1}_{Q_k}(x).$$
For the good function, we observe from Theorem \ref{C-Z decomposition} that $g(x)\leq 2^d\lambda$. Hence, it follows from $L_h^2$ boundedness that 
\begin{align*}
\left|\left\{ x\in \Z_h^d: |T_{K_h}g(x)| >\frac{\lambda}{2} \right\}\right|&\leq\left(\frac{2}{\lambda}\right)^2 \|T_{K_h}g\|_{L_h^2}^2\\
&\leq \left(\frac{2}{\lambda}\right)^2C_2^2\|g\|_{L_h^2}^2=\left(\frac{2}{\lambda}\right)^2C_2^2h^d\sum_{x\in\mathbb{Z}_h^d}|g(x)|^2\\
&\le\left(\frac{2}{\lambda}\right)^2 C_2^2\cdot2^d\lambda\cdot h^d\sum_{x\in\mathbb{Z}_h^d}g(x)= \frac{2^{d+2}C_2^2}{\lambda}\|g\|_{L_h^1}.
\end{align*}
For the bad function, by a trivial estimate, we have
$$\left|\left\{ x\in\Z_h^d: |T_{K_h}b(x)|>\frac{\lambda}{2} \right\}\right|\le \big|\bigsqcup 2Q_k\big|+\left|\left\{ x\notin \bigsqcup 2Q_k: |T_{K_h}b(x)|>\frac{\lambda}{2} \right\}\right|,$$
where $2Q_k$ is the cube with the same center as $Q_k$ and twice the length. For the first term, by Theorem \ref{C-Z decomposition},
$$\big|\bigsqcup 2Q_k\big|=2^d\big|\bigsqcup Q_k\big|\leq\frac{2^d}{\lambda}\|f\|_{L_h^1}.$$
On the other hand, for the second term, we write
\begin{equation}\label{bad set estimate 1}
\begin{aligned}
\frac{\lambda}{2}\left|\left\{ x\notin \bigsqcup 2Q_k: |T_{K_h}b(x)|>\frac{\lambda}{2} \right\}\right|&\leq h^d\sum_{x\notin \bigsqcup 2Q_k}|T_{K_h}b(x)|\\
&\leq h^d\sum_k\sum_{x\notin \bigsqcup 2Q_k}|T_{K_h}b_k(x)|.
\end{aligned}
\end{equation}
We now recall that each $b_k$ is supported on $Q_k$ and that its average is zero, i.e., $h^d\sum b_k=0$. Moreover, by the triangle inequality, 
$$\|b_k\|_{L_h^1}\leq \|f\mathbf{1}_{Q_k}\|_{L_h^1}+\|f\mathbf{1}_{Q_k}\|_{L_h^1}\frac{1}{|Q_k|}\|\mathbf{1}_{Q_k}\|_{L_h^1}=2\|f\mathbf{1}_{Q_k}\|_{L_h^1}=2h^d\sum_{x\in Q_k}f(x).$$
Hence, we have
\begin{equation}\label{bad set estimate 2}
\begin{aligned}
h^d\sum_{x\notin \bigsqcup 2Q_k}|T_{K_h}b_k(x)|&=h^d\sum_{x\notin \bigsqcup 2Q_k} h^d\Big|\sum_{y\in Q_k} K(x-y)b_k(y)\Big|\\
&=h^d\sum_{x\notin \bigsqcup 2Q_k} h^d\Big|\sum_{y\in Q_k} \left(K(x-y)-K(x-y_k)\right)b_k(y)\Big|\\
&\leq h^{2d}\sum_{x\notin \bigsqcup 2Q_k} \sum_{y\in Q_k} |K(x-y)-K(x-y_k)||b_k(y)|\\
&=h^{2d} \sum_{y\in Q_k}\sum_{x\notin \bigsqcup 2Q_k} |K(x-y)-K(x-y_k)||b_k(y)|\\
&\leq B h^{d} \sum_{y\in Q_k}|b_k(y)|=B\|b_k\|_{L_h^1},
\end{aligned}
\end{equation}
where $y_k$ is the center of $Q_k$. Here, the property $h^d\sum_{k} b_k=0$ is used in the second identity, and the assumption \eqref{kernel2} is used in the last inequality. Therefore, going back to \eqref{bad set estimate 1} and summing \eqref{bad set estimate 2} in $k$, we prove that 
\begin{align*}
\left|\left\{ x\notin \bigsqcup 2Q_k: |T_{K_h}b(x)|>\frac{\lambda}{2} \right\}\right|\leq \frac{2B}{\lambda}\sum_k\|b_k\|_{L_h^1}\leq\frac{4B}{\lambda}\cdot h^d\sum_{x\in\cup_k Q_k} f(x)\leq \frac{4B}{\lambda}\|f\|_{L_h^1}.
\end{align*}
Finally, collecting all, we conclude that
\begin{align*}
\left|\left\{x\in\mathbb{Z}_h^d: T_{K_h}f(x)\geq\lambda\right\}\right|&\leq \left|\left\{ x\in \Z_h^d: |T_{K_h}g(x)| >\frac{\lambda}{2} \right\}\right|+\left|\left\{ x\in \Z_h^d: |T_{K_h}b(x)| >\frac{\lambda}{2} \right\}\right|\\
&\lesssim\frac{1}{\lambda}\|f\|_{L_h^1}.
\end{align*}
\end{proof}


\section{H\"ormander-Mikhlin theorem and its applications}

In this section, we present the H\"ormander-Mikhlin multiplier theorem on a lattice and its applications.

\subsection{H\"ormander-Mikhlin theorem}
Given a symbol function $m=m_h$ on $\tph$, we consider the Fourier multiplier operator $T_m$ defined by
$$\widehat{T_mf}(\xi)=m(\xi)\hat{f}(\xi)\quad\textup{on }\tph.$$
We show that this multiplier operator is uniformly bounded if the symbol satisfies the assumption completely analogous to that in the multiplier theorem on $\mathbb{R}^d$.

\begin{thm}[Hormander-Mikhlin]\label{thm: Hormander Mikhlin}
Let $h\in(0,1]$. Suppose that $m:\mathbb{Z}_h^d\to\mathbb{C}$ satisfies 
\begin{align}\label{Hormander condition}
| \nabla^\alpha m (\xi)| \le c_\alpha|\xi|^{-|\alpha|}\quad \textup{on }\mathbb{T}_h^d\setminus\{0\}
\end{align}for all multi-index $|\alpha|\le d+2$. Then, for $1<p<\infty$, there exists $C_p>0$, independent of $h$, such that 
$$\| T_m f\|_{L_h^p} \le C_p \|f\|_{L_h^p}.$$
\end{thm}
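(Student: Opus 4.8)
The plan is to deduce the theorem from the Calder\'on--Zygmund theorem (Theorem~\ref{thm:CZ}): it suffices to exhibit a convolution kernel for $T_m$ satisfying the two hypotheses \eqref{kernel1}--\eqref{kernel2} with constants uniform in $h$. Writing $K_h:=\check m$, so that $\widehat{K_h}=m$ and $T_m=T_{K_h}$, condition \eqref{kernel1} is immediate from the $\alpha=0$ case of \eqref{Hormander condition}, namely $|m(\xi)|\le c_0$ on $\tph$. The whole difficulty is thus the H\"ormander regularity bound \eqref{kernel2}. Before estimating the kernel, I would record a scaling reduction that isolates the uniformity issue: under the dilation $x\mapsto x/h$ the space $L_h^p$ is mapped isometrically (up to the constant $h^{d/p}$, which cancels in operator norms) onto $\ell^p(\mathbb{Z}^d)$, and $T_m$ is conjugated to the multiplier on $\mathbb{Z}^d$ with symbol $\tilde m(\eta):=m(\eta/h)$ on $\mathbb T^d=[-\pi,\pi]^d$. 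Since $|\nabla^\alpha\tilde m(\eta)|=h^{-|\alpha|}|(\nabla^\alpha m)(\eta/h)|\le c_\alpha|\eta|^{-|\alpha|}$, the hypothesis \eqref{Hormander condition} is scale invariant with the \emph{same} constants $c_\alpha$. Hence it suffices to verify \eqref{kernel2} at $h=1$ with a bound depending only on $\{c_\alpha\}_{|\alpha|\le d+2}$; the resulting $C_p$ is then automatically independent of $h$.

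To bound $\sum_{|y|\ge 2|x|}|\tilde K(y-x)-\tilde K(y)|$ uniformly in $x\in\mathbb{Z}^d$ (the $h=1$ form of \eqref{kernel2}), I would run the standard Littlewood--Paley argument. Decompose $\tilde m=\sum_{N\le 1}\tilde m\,\psi_N$ using the dyadic partition of Section~2 and set $\tilde K=\sum_{N\le1}\tilde K_N$ with $\tilde K_N:=(\tilde m\,\psi_N)^\vee$. On the support of $\psi_N$ one has $|\eta|\sim N$, so Leibniz's rule combined with \eqref{Hormander condition} and $|\nabla^\gamma\psi_N|\ls N^{-|\gamma|}$ gives $|\nabla^\alpha(\tilde m\,\psi_N)|\ls N^{-|\alpha|}$ for $|\alpha|\le d+2$; repeated integration by parts then yields, for the Schwartz extension of $\tilde K_N$ to $\R^d$, the pointwise bounds $|\tilde K_N(z)|\ls N^d(1+N|z|)^{-(d+2)}$ and $|\nabla \tilde K_N(z)|\ls N^{d+1}(1+N|z|)^{-(d+2)}$. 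I would then split the sum over $N$ into two regimes: when $N|x|\le1$, use the mean value theorem $\tilde K_N(y-x)-\tilde K_N(y)=-\int_0^1 x\cdot\nabla\tilde K_N(y-tx)\,dt$ and the gradient bound to get a contribution $\ls N|x|$; when $N|x|\ge1$, discard the cancellation and use the size bound to get $\ls(N|x|)^{-2}$. Summing the two geometric series $\sum_{N|x|\le1}N|x|$ and $\sum_{N|x|\ge1}(N|x|)^{-2}$ over dyadic $N\le 1$ produces a bound independent of $x$, which is \eqref{kernel2}; an appeal to Theorem~\ref{thm:CZ} then finishes the proof.

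The step I expect to be the main obstacle is the kernel estimate for the top frequency scales $N\sim1$. Unlike on $\R^d$, the frequency domain is the torus, and the bump $\psi_N$ for the largest admissible $N$ need not vanish at the boundary $|\eta|_\infty=\pi$; there the periodized symbol $\tilde m\,\psi_N$ may fail to be smooth, so the naive integration by parts does not deliver the decay $(1+N|z|)^{-(d+2)}$. Handling this requires either choosing a torus-adapted (periodic) partition of unity for the top scale, or exploiting that near $|\eta|\sim1$ the Mikhlin bounds force $\tilde m$ to be smooth across the boundary while the offending piece is concentrated at unit spatial scale and hence contributes only an $O(1)$ amount to the difference sum. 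A secondary technical point is the passage between the discrete sums in \eqref{kernel2} and the $\R^d$ integrals used to evaluate the geometric series, which is legitimate because $\tilde K_N$ and $\nabla\tilde K_N$ vary on the unit scale and are thus faithfully sampled by $\mathbb{Z}^d$; the derivative count $d+2$ in \eqref{Hormander condition} is exactly what makes the high-frequency series summable, since the exponent $d+2$ exceeds $d$.
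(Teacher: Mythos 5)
Your proposal is correct and takes essentially the same route as the paper: reduction to the lattice Calder\'on--Zygmund theorem (Theorem~\ref{thm:CZ}), with \eqref{kernel1} immediate and \eqref{kernel2} verified via a Littlewood--Paley decomposition of the symbol and $(d+2)$-fold integration by parts summed over dyadic scales; your preliminary rescaling to $h=1$ and your per-scale two-regime difference estimates are only cosmetic reorganizations of the paper's argument, which instead establishes the single pointwise bound $|\nabla\check m(x)|\lesssim|x|^{-d-1}$ by summing the trivial and integrated-by-parts bounds over $N$ and then applies the mean value theorem once. The top-scale boundary subtlety you flag is genuine but equally present (and unaddressed) in the paper's proof, and it is disposed of essentially as you suggest: the finitely many pieces whose supports meet $\partial\mathbb{T}_h^d$ have all derivatives of $\psi_N$ vanishing on the boundary (so integration by parts is clean at lattice points), and these $O(1)$-many scales can be absorbed into \eqref{kernel2} by size bounds alone.
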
 

\begin{proof}
Since $|m(\xi)|$ is bounded, it suffices to show that the integral kernel $\check{m}$ satisfies \eqref{kernel2}. We can naturally extend the kernel $\check{m}$ on $\Z_h^d$ to a function on $\R^d$.
By the Littlewood-Paley projections, we decompose
$$\nabla\check{m}(x)=\frac{1}{(2\pi)^d}\int_{\mathbb{T}_h^d} i\xi m(\xi) e^{ix\cdot\xi}d\xi=\sum_{N\leq 1}\frac{1}{(2\pi)^d}\int_{\mathbb{T}_h^d} i\xi m(\xi)\psi_N(\xi) e^{ix\cdot\xi}d\xi.$$
It is obvious that 
$$\left|\int_{\mathbb{T}_h^d} i\xi m(\xi)\psi_N(\xi) e^{ix\cdot\xi}d\xi\right|\lesssim\left(\frac{N}{h}\right)^{d+1}.$$
On the other hand, by integration by parts $(d+2)$ times with $e^{ix\cdot\xi}=\frac{1}{ix_j}\frac{\partial}{\partial\xi_j}e^{ix\cdot\xi}$, one can show that
$$\left|\int_{\mathbb{T}_h^d} i\xi m(\xi)\psi_N(\xi) e^{ix\cdot\xi}d\xi\right|\lesssim \frac{h}{N|x|^{d+2}}.$$
Summing in $N$, we get the bound,
$$|\nabla\check{m}(x)|\leq\min_{N\leq\frac{h}{|x|}}\left(\frac{N}{h}\right)^{d+1}+\min_{N>\frac{h}{|x|}}\frac{h}{N|x|^{d+2}} \sim \frac{1}{|x|^{d+1}}.$$
Using this, we finally check 
\begin{align*}
h^d\sum_{y\in\mathbb{Z}_h^d, |y|\geq 2|x|} |\check{m}(y-x)-\check{m}(y)| 
&\le h^d\sum_{y\in\mathbb{Z}_h^d, |y|\geq 2|x|} |\nabla \check{m} (y-xt) | |x| , \text{ for some } t\in[0,1] \\ 
&\ls h^d|x|\sum_{y\in\mathbb{Z}_h^d, |y|\geq 2|x|} |y|^{-d-1} \\
&\ls |x| \int_{|x|}^\infty |y|^{-d-1} dy = B,
\end{align*}
where the third one follows considering the Riemann summation and $B>0$ is independent of $h>0$.
\end{proof}

\subsection{Littlewood-Paley decomposition}
As a first application of the H\"ormander-Mikhlin theorem we show the Littlewood-Paley theorem is also valid for functions on lattice with uniform bound in $h$. 
\begin{thm}\label{thm:LP}
	Let $1<p<\infty$. For $f \in L_h^p$, there exist positive constants $c_p, C_p>0$, independent of $h\in(0,1]$, such that
	\begin{align}\label{ineq:LP}
	c_p\|f\|_{L_h^p}\le
	\Big\| \Big( \sum_N |P_N f|^2 \Big)^\frac12 \Big\|_{L_h^p} \le C_p \| f\|_{L_h^p}.
	\end{align}
\end{thm}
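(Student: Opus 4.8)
The plan is to follow the classical randomization argument, combining Khintchine's inequality with the H\"ormander-Mikhlin theorem (Theorem \ref{thm: Hormander Mikhlin}) already established. The one structural fact that makes everything go through uniformly in $h$ is that the cutoffs $\psi_N$ have \emph{bounded overlap}: since $\supp\psi_N\subset[-\tfrac{4\pi N}{h},\tfrac{4\pi N}{h}]^d\setminus[-\tfrac{\pi N}{h},\tfrac{\pi N}{h}]^d$, any fixed $\xi\in\mathbb{T}_h^d$ lies in the support of at most $O(1)$ of the $\psi_N$, namely those with $N\sim h|\xi|$.

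For the upper bound I would introduce Rademacher signs $\{\epsilon_N(\omega)\}_{N\le 1}$ and consider the randomized multiplier $m_\omega(\xi):=\sum_{N\leq 1}\epsilon_N(\omega)\psi_N(\xi)$. The key point is that $m_\omega$ satisfies the H\"ormander condition \eqref{Hormander condition} \emph{uniformly} in $\omega$ and $h$. Indeed, from $\psi_N(\xi)=\varphi(\tfrac{h\xi}{2\pi N})$ one computes $|\nabla^\alpha\psi_N(\xi)|\lesssim(\tfrac{h}{N})^{|\alpha|}$, and on $\supp\psi_N$ one has $|\xi|\sim\tfrac{N}{h}$, so $|\nabla^\alpha\psi_N(\xi)|\lesssim|\xi|^{-|\alpha|}$; by bounded overlap this bound survives summation in $N$, giving $|\nabla^\alpha m_\omega(\xi)|\lesssim|\xi|^{-|\alpha|}$ for all $|\alpha|\le d+2$ with constants independent of $\omega,h$. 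Hence Theorem \ref{thm: Hormander Mikhlin} yields $\|T_{m_\omega}f\|_{L_h^p}=\|\sum_N\epsilon_N(\omega)P_Nf\|_{L_h^p}\leq C_p\|f\|_{L_h^p}$. Raising to the $p$-th power, integrating in $\omega$, exchanging the integral with the (nonnegative, hence Tonelli-justified) sum over $x$, and applying Khintchine's inequality pointwise in $x$, I obtain
\[
\Big\|\Big(\sum_N|P_Nf|^2\Big)^{1/2}\Big\|_{L_h^p}^p\sim\int_\Omega\Big\|\sum_N\epsilon_N(\omega)P_Nf\Big\|_{L_h^p}^p\,d\omega\leq C_p^p\|f\|_{L_h^p}^p,
\]
which is the right-hand inequality in \eqref{ineq:LP}.

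For the lower bound I would argue by duality using the reproducing identity $\sum_{N\leq 1}\psi_N\equiv1$ on $\mathbb{T}_h^d$ together with the fattened projection $\tilde{P}_N$ (multiplier $\psi_{N-1}+\psi_N+\psi_{N+1}$, identity on $\supp\psi_N$), so that $P_N=\tilde{P}_NP_N$. For $g\in L_h^{p'}$ with $\|g\|_{L_h^{p'}}\leq1$, I write $\langle f,g\rangle=\sum_N\langle P_Nf,\tilde{P}_Ng\rangle$, then apply Cauchy-Schwarz in $N$ followed by H\"older in $x$ to get
\[
|\langle f,g\rangle|\leq\Big\|\Big(\sum_N|P_Nf|^2\Big)^{1/2}\Big\|_{L_h^p}\Big\|\Big(\sum_N|\tilde{P}_Ng|^2\Big)^{1/2}\Big\|_{L_h^{p'}}.
\]
The second factor is controlled by the already-proven upper bound, now at the conjugate exponent $p'$ (the fattened cutoffs $\widetilde{\psi_N}$ enjoy the same scaling and bounded overlap), giving $\lesssim\|g\|_{L_h^{p'}}\leq1$. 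Taking the supremum over such $g$ and invoking the duality $(L_h^p)^*=L_h^{p'}$ yields $\|f\|_{L_h^p}\lesssim\|(\sum_N|P_Nf|^2)^{1/2}\|_{L_h^p}$.

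The main obstacle, and the only step where the lattice setting genuinely intervenes, is the uniform-in-$h$ verification of the H\"ormander condition for the randomized multiplier; the rest (Khintchine, the duality pairing) is insensitive to $h$ because it is pointwise in $x$. The bounded-overlap property together with the scaling $|\nabla^\alpha\psi_N|\lesssim(h/N)^{|\alpha|}\sim|\xi|^{-|\alpha|}$ on the support is precisely what forces the constant in Theorem \ref{thm: Hormander Mikhlin} to be independent of $h$, and this independence then propagates to $c_p$ and $C_p$.
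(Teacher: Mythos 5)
Your proposal is correct and follows essentially the same strategy as the paper: for the upper bound, randomized sign multipliers $\sum_N \epsilon_N \psi_N$ verified against the H\"ormander--Mikhlin condition \eqref{Hormander condition} uniformly in $h$, combined with Khinchine's inequality; for the lower bound, duality together with the upper bound at the conjugate exponent. The only notable difference is the duality pairing: the paper writes $\la f,g\ra_h = \sum_N \la P_N f, P_N g\ra_h$ via polarization of a claimed $L_h^2$ identity, while you write $\la f,g\ra = \sum_N \la P_N f, \tilde{P}_N g\ra$ using $P_N = \tilde{P}_N P_N$ and $\sum_N P_N = \textup{Id}$ --- a variant that is in fact slightly cleaner, since the paper's exact identity would require $\sum_N \psi_N^2 \equiv 1$ rather than the available $\sum_N \psi_N \equiv 1$.
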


Our proof is based on the following randomization technique.
\begin{lem}[Khinchine's inequality for scalars \cite{Khinchiine1981best}]
Let $z_1,\cdots, z_N$ be complex numbers, and let $\epsilon_1,\cdots,\epsilon_N\in \{ -1, 1\}$ be independent random signs, drawn from $\{ -1, 1\}$ with the uniform distributions. Then for any $0<p<\infty$ 
\begin{align}\label{Khinchine's inequality}
\big( \mathbb E|\sum_{j=1}^N \epsilon_{j}z_{j}|^{p} \big)^{\frac1p}
\sim_{p} \big( \sum_{j=1}^N |z_j|^2 \big)^\frac12.
\end{align}
\end{lem}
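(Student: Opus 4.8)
The plan is to prove the two-sided bound by reducing to real scalars and then treating the exponent ranges $p=2$, $p>2$, and $0<p<2$ in turn. Writing $z_j=a_j+ib_j$ with $a_j,b_j\in\R$, we have $\re\big(\sum_j\epsilon_jz_j\big)=\sum_j\epsilon_ja_j$ and $\im\big(\sum_j\epsilon_jz_j\big)=\sum_j\epsilon_jb_j$. Since $\tfrac12(|\re w|^p+|\im w|^p)\le|w|^p\le c_p(|\re w|^p+|\im w|^p)$ for a $p$-dependent constant $c_p$, the complex moment $\mathbb{E}\big|\sum_j\epsilon_jz_j\big|^p$ is comparable to $\mathbb{E}\big|\sum_j\epsilon_ja_j\big|^p+\mathbb{E}\big|\sum_j\epsilon_jb_j\big|^p$; and the equivalence of the $\ell^p$ and $\ell^2$ norms on $\R^2$ gives $(\sum_j a_j^2)^{p/2}+(\sum_j b_j^2)^{p/2}\sim_p(\sum_j|z_j|^2)^{p/2}$. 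Hence it suffices to prove \eqref{Khinchine's inequality} for real $z_j$, and I set $S:=\sum_{j=1}^N\epsilon_jz_j$ with $z_j\in\R$ and $\sigma^2:=\sum_{j=1}^N z_j^2$.

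The anchor is $p=2$, which is an exact identity: by independence and $\mathbb{E}\epsilon_j=0$, $\mathbb{E}\epsilon_j\epsilon_k=\delta_{jk}$, so $\mathbb{E}|S|^2=\sum_{j,k}z_jz_k\,\mathbb{E}\epsilon_j\epsilon_k=\sigma^2$. For the upper bound when $p>2$, I would exploit the subgaussian nature of $S$. Using $\cosh t\le e^{t^2/2}$ and independence, the moment generating function satisfies
\begin{equation}
\mathbb{E}\,e^{\lambda S}=\prod_{j=1}^N\cosh(\lambda z_j)\le e^{\lambda^2\sigma^2/2},\qquad\lambda\in\R,
\end{equation}
and the standard Chernoff argument yields the tail bound $\mathbb{P}(|S|\ge t)\le 2e^{-t^2/(2\sigma^2)}$. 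Integrating $\mathbb{E}|S|^p=\int_0^\infty pt^{p-1}\mathbb{P}(|S|\ge t)\,dt$ then gives $\mathbb{E}|S|^p\le C_p\sigma^p$, i.e. $\|S\|_{L^p}\lesssim_p\|S\|_{L^2}$; the matching lower bound $\|S\|_{L^2}\le\|S\|_{L^p}$ is immediate from Jensen's inequality.

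It remains to treat $0<p<2$, where the upper bound $\|S\|_{L^p}\le\|S\|_{L^2}$ is again Jensen, so the only substantive point---and the main obstacle---is the lower bound $\|S\|_{L^2}\lesssim_p\|S\|_{L^p}$. I would derive it from the $p>2$ estimate by a log-convexity (three-line) interpolation of $L^p$ norms: fix an exponent $q>2$ and choose $\theta\in(0,1)$ with $\frac12=\frac{\theta}{p}+\frac{1-\theta}{q}$, so that $\|S\|_{L^2}\le\|S\|_{L^p}^{\theta}\|S\|_{L^q}^{1-\theta}$. Combining this with the already-established $\|S\|_{L^q}\le C_q\|S\|_{L^2}$ and absorbing the factor $\|S\|_{L^2}^{1-\theta}$ into the left-hand side (legitimate since $\|S\|_{L^2}=\sigma<\infty$, and the case $\sigma=0$ is trivial) produces $\|S\|_{L^2}\le C_q^{(1-\theta)/\theta}\|S\|_{L^p}$, as desired.

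Assembling the real-valued two-sided bounds through the reduction of the first paragraph yields \eqref{Khinchine's inequality} for complex scalars, with constants depending only on $p$. The only delicate step is the lower bound for $0<p<2$; everything else is either an exact computation ($p=2$), a Gaussian-type moment estimate ($p>2$ upper bound), or a direct application of Jensen's inequality.
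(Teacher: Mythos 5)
Your proof is correct, but note that the paper itself offers no argument for this lemma: it is stated with a bare citation to the literature on best constants in Khinchine's inequality, so there is no in-paper proof to compare against. Your write-up supplies the standard self-contained argument, and every step checks out: the reduction from complex to real scalars via $\tfrac12(|\re w|^p+|\im w|^p)\le |w|^p\le c_p(|\re w|^p+|\im w|^p)$ together with $(\sum_j a_j^2)^{p/2}+(\sum_j b_j^2)^{p/2}\sim_p(\sum_j|z_j|^2)^{p/2}$ is sound; the identity $\mathbb{E}|S|^2=\sigma^2$ follows from orthogonality of the signs; the bound $\mathbb{E}e^{\lambda S}=\prod_j\cosh(\lambda z_j)\le e^{\lambda^2\sigma^2/2}$ with Chernoff and the layer-cake formula gives the subgaussian moment bound $\|S\|_{L^p}\lesssim_p\sigma$ for $p>2$; and the extrapolation for $0<p<2$ via log-convexity $\|S\|_{L^2}\le\|S\|_{L^p}^\theta\|S\|_{L^q}^{1-\theta}$ (valid for exponents below $1$, since it is just H\"older applied to $|S|^{2\theta}\cdot|S|^{2(1-\theta)}$) with absorption of the finite factor $\|S\|_{L^2}^{1-\theta}$ is the classical trick. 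Two small remarks: your approach yields constants that are far from optimal (the cited reference is precisely about best constants, which the paper never needs, so this costs nothing for the application to Lemma~\ref{Lem:Khinchine's inequality for functions} and Theorem~\ref{thm:LP}); and an equally standard alternative for the delicate $0<p<2$ lower bound is the Paley--Zygmund inequality combined with $\mathbb{E}|S|^4\le 3\sigma^4$, which avoids invoking the full $p>2$ case --- but your interpolation route is equally legitimate and arguably cleaner given that you have already established the subgaussian bound.
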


\begin{lem}[Khinchine's inequality for functions on $\Z_h$]\label{Lem:Khinchine's inequality for functions} 
Let $f_1,\cdots,f_N\in L_h^p$ for some $1<p<\infty$, and let $\epsilon_1,\cdots,\epsilon_N\in \{ -1,1 \}$ be independent of random signs, drawn from 
$\{ -1, 1\}$ with the uniform distributions. Then we have
\begin{align}\label{Khinchine's inequality for functions}
\big( \mathbb E \| \sum_{j=1}^N \epsilon_j f_j \|_{L_h^p}^p \big)^\frac1p
\sim_{p} \big\| (\sum_{j=1}^N |f_j|^2)^\frac12 \big\|_{L_h^p} 
\end{align}
\end{lem}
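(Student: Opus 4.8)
The plan is to derive the function-valued inequality \eqref{Khinchine's inequality for functions} from the scalar version \eqref{Khinchine's inequality} by applying the latter pointwise in $x$ and then interchanging the expectation with the spatial sum via Fubini--Tonelli. The key structural observation is that the $L_h^p$-norm is built from a single spatial average, so raising it to the $p$-th power turns the left-hand side into an iterated sum over $\mathbb{Z}_h^d$ and an expectation, to which the scalar estimate applies independently at each lattice point.

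First I would unfold the definition of the norm and use Tonelli's theorem, legitimate since all integrands are non-negative, to interchange the expectation over the random signs with the summation over $x$:
\begin{equation*}
\mathbb{E}\left\| \sum_{j=1}^N \epsilon_j f_j \right\|_{L_h^p}^p = h^d\sum_{x\in\mathbb{Z}_h^d} \mathbb{E}\left| \sum_{j=1}^N \epsilon_j f_j(x) \right|^p.
\end{equation*}
For each fixed $x\in\mathbb{Z}_h^d$, I would then apply the scalar Khinchine inequality \eqref{Khinchine's inequality} with the choice of complex numbers $z_j = f_j(x)$, obtaining
\begin{equation*}
\mathbb{E}\left| \sum_{j=1}^N \epsilon_j f_j(x) \right|^p \sim_p \left( \sum_{j=1}^N |f_j(x)|^2 \right)^{p/2},
\end{equation*}
with a two-sided constant depending only on $p$. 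Substituting this back into the previous identity, recognizing the resulting expression $h^d\sum_x \big(\sum_j |f_j(x)|^2\big)^{p/2}$ as the $p$-th power of the desired norm, and finally taking $p$-th roots yields \eqref{Khinchine's inequality for functions}.

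The only point requiring care, and the reason the conclusion is genuinely uniform in $h$, is that the constants in the scalar inequality \eqref{Khinchine's inequality} depend on $p$ alone and not on $x$, $N$, or the values $f_j(x)$. Consequently the same two-sided constant factors out of the sum over $x$, and no $h$-dependence is introduced at any stage, since $h$ enters only through the fixed prefactor $h^d$ and the index set $\mathbb{Z}_h^d$, both of which are untouched by the pointwise estimate. I therefore do not expect any substantive analytic obstacle: the argument is a direct transfer of the scalar estimate through the norm, and the hypothesis $1<p<\infty$ is used only to ensure that $\|\cdot\|_{L_h^p}$ is a genuine norm, the scalar inequality itself being available for all $0<p<\infty$.
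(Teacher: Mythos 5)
Your proposal is correct and is essentially identical to the paper's own proof: both apply the scalar Khinchine inequality pointwise at each lattice point $x$ with $z_j=f_j(x)$ and then use Fubini--Tonelli to exchange the expectation with the sum defining the $L_h^p$-norm. Your write-up simply makes explicit the interchange step that the paper compresses into a single displayed identity.
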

\begin{proof}
For $x\in \Z_h^d$ we apply \eqref{Khinchine's inequality} to the sequences $f_1(x),\cdots,f_N(x)$ and then take $L_h^p$ norms
\begin{align*}
\| \big( \mathbb E|\sum_{j=1}^N \epsilon_{j}f_{j}|^{p} \big)^{\frac1p} \|_{L_h^p}^p
= \mathbb E \| \sum_{j=1}^N \epsilon_j f_j \|_{L_h^p}^p.
\end{align*} 
\end{proof}

\begin{proof}[Proof of Theorem~\ref{thm:LP}]
We first prove the second inequality in \eqref{ineq:LP}. By monotone convergence, it suffices to prove it assuming that the summation runs over finitely many $N$. That is, we suffices to show that for fixed $M\ll 1$,
\begin{align*}
\Big\| \Big( \sum_{M\le N \le 1} |P_N f|^2 \Big)^\frac12 \Big\|_{L_h^p} \le C_p \| f\|_{L_h^p}.
\end{align*}
A key observation is that for arbitrary $\epsilon_N\in\{ -1,1\}$ the multiplier $\sum_{K\le N\le1} \epsilon_N \psi_N$ obeys the assumption \eqref{Hormander condition} in the Hormander-Mikhlin Theorem. Thus we have
\begin{align*}
\| \sum_{M\le N\le1} \epsilon_N P_N f \|_{L_h^p}^p \le C_p \|f\|_{L_h^p}^p.
\end{align*}
Taking expectations on both sides and applying Khinchine's inequality \eqref{Khinchine's inequality for functions} we get the desired result.

Now we prove the first inequality in \eqref{ineq:LP}.
Note that \begin{align*}
	\|\big(\sum_{N } |P_N f |^2\big)^\frac12\|_{L_h^2} = \| f\|_{L_h^2}.
	\end{align*} Plugging $f+g$ into above identity we obtain
	\begin{align*}
	\| \sum_{N}P_N f \cdot \overline{P_N g} \|_{L_h^1} 
	= \la f, g \ra_h.
	\end{align*}
Then we have by duality
	\begin{align*}
	\|f\|_{L_h^p} &=\sup_{\| g \|_{L_h^q}\le 1} \la f, g \ra_h 
		=\sup_{\| g \|_{L_h^q}\le 1}  \| \sum_{N}P_N f \cdot \overline{P_N g} \|_{L_h^1}  \\
	&\le \sup_{\| g \|_{L_h^q}\le 1} 
	\| (\sum_{N} |P_N f|^2)^\frac12 \|_{L_h^p} \|(\sum_{N} |P_N g|^2)^\frac12 \|_{L_h^q} \\
	& \le \sup_{\| g \|_{L_h^q}\le 1}  \| (\sum_{N} |P_N f|^2)^\frac12 \|_{L_h^p}  \| g\|_{L_h^q } 
	\le C_p \| (\sum_{N} |P_N f|^2)^\frac12 \|_{L_h^p}.
	\end{align*}
\end{proof}

\subsection{Sobolev spaces and and norm equivalence (Proof of Proposition \ref{norm equivalence})}
By the definitions, differential operators $|\nabla|^s=|\nabla_h|^s$, $(-\Delta_h)^{\frac{s}{2}}$ and $D_{j;h}^+$ are Fourier multiplier operators of symbols $|\xi|^s$, $(\frac{4}{h^2} \sum_{j=1}^d \sin^2(\frac{h\xi_j}{2}))^{\frac{s}{2}}$ and $\frac{e^{ih\xi_j}-1}{h}$, respectively. Thus, applying the H\"ormander-Mikhlin theorem to the symbols, we prove the norm equivalence among Sobolev norms.

\begin{proof}[Proof of Proposition \ref{norm equivalence}]
By direct calculations, one can show that the symbols $\frac{\left(\frac{4}{h^2} \sum_{j=1}^d \sin^2(\frac{h\xi_j}{2})\right)^{\frac{s}{2}}}{|\xi|^s}$, $\frac{|\xi|^s}{\left(\frac{4}{h^2} \sum_{j=1}^d \sin^2(\frac{h\xi_j}{2})\right)^{\frac{s}{2}}}$, $\frac{\frac{e^{ih\xi_j}-1}{h}}{|\xi|}$
satisfy \eqref{Hormander condition}.  Therefore, it follows from Theorem \ref{thm: Hormander Mikhlin} that for all $1<p<\infty$, $\|(-\Delta_h)^{\frac{s}{2}}f\|_{L_h^p}\sim \|f\|_{\dot{W}_h^{s,p}}$ and $\sum_{j=1}^d\|D_{j;h}^+f\|_{L^p}\lesssim \|f\|_{\dot{W}_h^{1,p}}$.

We introduce a partition of unity $\{\chi_j\}_{j=1}^{d}$ on the unit sphere $\mathbb{S}^{d-1}$ such that $\chi_j(\xi)\equiv 1$ on $\{|\xi_j|\geq 3|\xi|\}$ but $\chi_j(\xi)\equiv 0$ on $\{|\xi_j|\leq \frac{1}{3}|\xi|\}$, and define the projection operator $\Gamma_j$ by $\widehat{\Gamma_j f}(\xi)=\chi_j(\xi) \hat{f}(\xi)$. By direct calculations again, one can show that $\chi_j(\xi)|\xi|(\frac{e^{ih\xi_j}-1}{h})^{-1}$ satisfies \eqref{Hormander condition}. As a result, we obtain $\|f\|_{\dot{W}_h^{1,p}}\le \sum_{j=1}^d \|\Gamma_j f\| _{\dot{W}_h^{1,p}}\lesssim \sum_{j=1}^d\|D_{j;h}^+f\|_{L^p}$ for all $1<p<\infty$.
\end{proof}

By the same way, one can show norm equivalence for inhomogeneous Sobolev norms (see \eqref{Sobolev norm}).

\begin{prop}
	For any $1<p<\infty$, we have
	$$\|f\|_{W_h^{s,p}}\sim \|(1-\Delta_h)^{\frac{s}{2}}f\|_{L_h^p}\quad\forall s\in\mathbb{R}$$
	and
	$$\|f\|_{W_h^{1,p}}\sim \|f\|_{L_h^p}+\sum_{j=1}^d\|D_{j;h}^+f\|_{L_h^p},$$
\end{prop}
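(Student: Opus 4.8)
The plan is to reproduce the Fourier-multiplier strategy already used for the homogeneous norm equivalence (Proposition~\ref{norm equivalence}), now comparing the discrete inhomogeneous symbols against their continuous counterparts and checking the H\"ormander--Mikhlin condition \eqref{Hormander condition} \emph{uniformly} in $h$. Write $\sigma_h(\xi):=1+\frac{4}{h^2}\sum_{j=1}^d\sin^2(\frac{h\xi_j}{2})$ for the symbol of $(1-\Delta_h)$. The elementary bound $\frac{4}{\pi^2}|\xi_j|^2\le\frac{4}{h^2}\sin^2(\frac{h\xi_j}{2})\le|\xi_j|^2$, valid for $\xi\in\mathbb{T}_h^d$, gives the two-sided comparison $\sigma_h(\xi)\sim\langle\xi\rangle^2$ with constants independent of $h$, and this is the mechanism that replaces the exact homogeneity used before.

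For the first statement I would show that the two ratio symbols $\sigma_h(\xi)^{s/2}\langle\xi\rangle^{-s}$ and $\langle\xi\rangle^s\sigma_h(\xi)^{-s/2}$ satisfy \eqref{Hormander condition} uniformly in $h$, so that Theorem~\ref{thm: Hormander Mikhlin} yields $\|(1-\Delta_h)^{s/2}f\|_{L_h^p}\sim\|\langle\nabla_h\rangle^s f\|_{L_h^p}=\|f\|_{W_h^{s,p}}$. Both symbols are smooth on all of $\mathbb{T}_h^d$ (the denominators never vanish) and are bounded above and below by $\sigma_h\sim\langle\xi\rangle^2$, so the case $\alpha=0$ is immediate, and near the origin the required bound $|\xi|^{-|\alpha|}$ is automatic for $|\alpha|\ge1$. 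The only real content is the decay away from the origin, which follows from the chain rule together with the derivative estimates $|\partial_{\xi_k}\sigma_h|=\frac{2}{h}|\sin(h\xi_k)|\le2|\xi|$, $|\partial_{\xi_k}^2\sigma_h|=2|\cos(h\xi_k)|\le2$, and $|\partial^\beta\sigma_h|\lesssim1$ for $|\beta|\ge2$ (all uniform in $h\le1$, using separability of $\sigma_h$); these match the scaling of $\langle\xi\rangle^2$ and its derivatives, so each $\nabla^\alpha$ of the ratio is $\lesssim|\xi|^{-|\alpha|}$ uniformly. This is the routine direct calculation already invoked in Proposition~\ref{norm equivalence}, carried out with $\sigma_h$ in place of its homogeneous analogue.

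Having the first equivalence with $s=1$, it remains to compare $\langle\nabla_h\rangle$ with the family $\{1,D_{1;h}^+,\dots,D_{d;h}^+\}$. The bound in which the right side controls the left is easy: the multiplier $\langle\xi\rangle^{-1}\le1$ and the multipliers $\frac{e^{ih\xi_j}-1}{h}\langle\xi\rangle^{-1}$, bounded using $|\frac{e^{ih\xi_j}-1}{h}|=\frac{2}{h}|\sin(\frac{h\xi_j}{2})|\le|\xi|\le\langle\xi\rangle$, all satisfy \eqref{Hormander condition} uniformly, giving $\|f\|_{L_h^p}+\sum_j\|D_{j;h}^+f\|_{L_h^p}\lesssim\|f\|_{W_h^{1,p}}$. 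For the reverse bound I would split $\langle\xi\rangle$ with a fixed ($h$-independent) cutoff $\phi_0$ equal to $1$ on $|\xi|\le1$ and supported in $|\xi|\le2$. The piece $\phi_0(\xi)\langle\xi\rangle$ is a smooth compactly supported symbol, hence an $L_h^p$-bounded multiplier contributing $\lesssim\|f\|_{L_h^p}$. On $|\xi|\ge1$, where $\langle\xi\rangle\sim|\xi|$, I would insert the partition of unity $\{\chi_j\}$ from the proof of Proposition~\ref{norm equivalence} and write $(1-\phi_0(\xi))\langle\xi\rangle=\sum_j m_j(\xi)\frac{e^{ih\xi_j}-1}{h}$ with $m_j(\xi)=(1-\phi_0(\xi))\langle\xi\rangle\chi_j(\xi)(\frac{e^{ih\xi_j}-1}{h})^{-1}$. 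On $\supp\chi_j$ one has $|\xi_j|\gtrsim|\xi|$, and the lower bound $\frac{2}{h}|\sin(\frac{h\xi_j}{2})|\ge\frac{2}{\pi}|\xi_j|$ on $\mathbb{T}_h^d$ forces $|m_j|\lesssim\langle\xi\rangle/|\xi_j|\lesssim1$, with the analogous derivative bounds; thus each $m_j$ satisfies \eqref{Hormander condition} uniformly and $\|(1-\phi_0(\nabla_h))\langle\nabla_h\rangle f\|_{L_h^p}\le\sum_j\|T_{m_j}D_{j;h}^+f\|_{L_h^p}\lesssim\sum_j\|D_{j;h}^+f\|_{L_h^p}$. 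Adding the two pieces completes the equivalence.

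I expect the only genuine difficulty to be bookkeeping rather than conceptual: verifying that all the ratio symbols meet \eqref{Hormander condition} with constants independent of $h$, in particular controlling the competition between the oscillatory factors $\sin(\frac{h\xi_j}{2})$ (which degenerate as $h\to0$ and also vanish at $\xi=0$) and the smooth weights $\langle\xi\rangle^s$. The vanishing of each $D_{j;h}^+$ at the origin is precisely what prevents $\sum_j\|D_{j;h}^+f\|_{L_h^p}$ alone from controlling $\|f\|_{W_h^{1,p}}$, and is what necessitates the extra $\|f\|_{L_h^p}$ term, which the low-frequency cutoff $\phi_0$ is designed to absorb.
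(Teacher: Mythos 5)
Your overall strategy is exactly the paper's: the paper proves this proposition simply by remarking that it follows ``by the same way'' as Proposition~\ref{norm equivalence}, i.e., by checking the H\"ormander--Mikhlin condition \eqref{Hormander condition} for the ratio symbols and invoking Theorem~\ref{thm: Hormander Mikhlin}. Your additions — the fixed low-frequency cutoff $\phi_0$, and the observation that the vanishing of the symbols $\frac{e^{ih\xi_j}-1}{h}$ at $\xi=0$ is precisely why the $\|f\|_{L_h^p}$ term cannot be dropped — are the correct adaptation to the inhomogeneous setting.

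However, one step as written would fail. You claim that the bounds $|\partial_{\xi_k}\sigma_h|\le 2|\xi|$ and $|\partial^\beta\sigma_h|\lesssim 1$ for $|\beta|\ge 2$ ``match the scaling of $\langle\xi\rangle^2$ and its derivatives'' and hence yield $|\nabla^\alpha(\sigma_h^{s/2}\langle\xi\rangle^{-s})|\lesssim|\xi|^{-|\alpha|}$. They do not: derivatives of $\langle\xi\rangle^2$ of order $\ge 3$ vanish identically, whereas $\partial_{\xi_k}^3\sigma_h=-2h\sin(h\xi_k)\not\equiv 0$, and feeding only $|\partial^3\sigma_h|\lesssim 1$ into the chain rule produces, for $|\alpha|=3$ and large $|\xi|$, terms like $\sigma_h^{s/2-1}(\partial^3\sigma_h)\langle\xi\rangle^{-s}\lesssim|\xi|^{-2}$, which falls short of the required $|\xi|^{-3}$. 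Since \eqref{Hormander condition} must hold for all $|\alpha|\le d+2\ge 3$, this matters in every dimension. The repair is short but essential, and it is exactly where the restriction to the torus enters: for $|\beta|\ge 2$ one has $|\partial^\beta\sigma_h|\le 2h^{|\beta|-2}$, and since $|\xi|\le\sqrt{d}\,\pi/h$ on $\mathbb{T}_h^d$ this gives $|\partial^\beta\sigma_h|\lesssim_d|\xi|^{2-|\beta|}$; in other words, $\sigma_h$ is uniformly a symbol of order $2$ \emph{on $\mathbb{T}_h^d$}, which is the estimate the chain-rule computation actually needs. The same caveat applies in your second equivalence to the derivatives of order $\ge 2$ of $\frac{e^{ih\xi_j}-1}{h}$ and of its reciprocal on $\supp\chi_j$, which carry factors of $h$ that must likewise be traded for negative powers of $|\xi|$ via the torus restriction. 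With this correction your argument is complete and coincides with the proof the paper intends.
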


We can also prove the relation between homogeneous and  inhomogeneous norm by the same argument.
\begin{cor}
For $1<p<\infty$ and $s\ge0$ we have 
\begin{equation}\label{ineq:bound by inhomogeneous}
\|f\|_{L_h^p} \ls \|f\|_{W_h^{s,p}} \text{ and }
\|f\|_{\dot W_h^{s,p}} \ls \|f\|_{W_h^{s,p}}.
\end{equation}
\end{cor}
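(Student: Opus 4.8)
The plan is to read off both estimates from the Hörmander--Mikhlin multiplier theorem (Theorem~\ref{thm: Hormander Mikhlin}), in the same spirit as the proof of Proposition~\ref{norm equivalence}. Set $g:=\la\nabla_h\ra^s f$, so that $\hat g(\xi)=\la\xi\ra^s\hat f(\xi)$ and $\|g\|_{L_h^p}=\|f\|_{W_h^{s,p}}$. The crucial point for the uniformity in $h$ is that both quantities to be bounded arise from $g$ through a Fourier multiplier whose symbol is a fixed function on $\R^d$ (merely restricted to the enlarging domain $\tph$ as $h\to0$); hence the constants $c_\alpha$ in \eqref{Hormander condition} do not depend on $h\in(0,1]$, and neither does the resulting operator norm.

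For the first inequality I would write $\hat f(\xi)=\la\xi\ra^{-s}\hat g(\xi)$, i.e. $f=T_{m_1}g$ with $m_1(\xi):=\la\xi\ra^{-s}=(1+|\xi|^2)^{-s/2}$. Since $s\ge0$, $m_1$ is smooth on all of $\R^d$ and $0<m_1\le1$, which gives the $|\alpha|=0$ case of \eqref{Hormander condition}. For $1\le|\alpha|\le d+2$ a direct computation yields $|\nabla^\alpha m_1(\xi)|\ls\la\xi\ra^{-s-|\alpha|}\le\la\xi\ra^{-|\alpha|}\ls|\xi|^{-|\alpha|}$, the last step being trivial where $|\xi|\ls1$ (the right-hand side is then $\gtrsim1$) and following from $\la\xi\ra\sim|\xi|$ where $|\xi|\gtrsim1$. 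Theorem~\ref{thm: Hormander Mikhlin} then gives $\|f\|_{L_h^p}=\|T_{m_1}g\|_{L_h^p}\ls\|g\|_{L_h^p}=\|f\|_{W_h^{s,p}}$.

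For the second inequality I would note $\widehat{|\nabla_h|^sf}(\xi)=|\xi|^s\hat f(\xi)=m_2(\xi)\hat g(\xi)$ with
$$m_2(\xi):=|\xi|^s\la\xi\ra^{-s}=\Big(\tfrac{|\xi|^2}{1+|\xi|^2}\Big)^{s/2},$$
so $|\nabla_h|^sf=T_{m_2}g$ and it remains to verify \eqref{Hormander condition} for $m_2$. Again $0\le m_2\le1$ handles $|\alpha|=0$. The remaining derivative bounds are the only place requiring genuine care: $m_2$ is smooth away from the origin, but the factor $|\xi|^s$ is not smooth at $\xi=0$ for non-integer $s$. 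Treating $m_2$ as a radial symbol $F(|\xi|^2)$ with $F(t)=t^{s/2}(1+t)^{-s/2}$, one checks that near the origin $|\nabla^\alpha m_2(\xi)|\ls|\xi|^{s-|\alpha|}$, while for $|\xi|\gtrsim1$ the derivatives decay even faster, $|\nabla^\alpha m_2(\xi)|\ls|\xi|^{-1-|\alpha|}$; in both regimes the right-hand side is $\ls|\xi|^{-|\alpha|}$ precisely because $s\ge0$. Thus \eqref{Hormander condition} holds for all $|\alpha|\le d+2$, and Theorem~\ref{thm: Hormander Mikhlin} gives $\|f\|_{\dot W_h^{s,p}}=\|T_{m_2}g\|_{L_h^p}\ls\|g\|_{L_h^p}=\|f\|_{W_h^{s,p}}$.

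I expect the verification of \eqref{Hormander condition} for $m_2$ near $\xi=0$ to be the main (minor) obstacle, since it is where the non-smoothness of $|\xi|^s$ enters; the sign condition $s\ge0$ is exactly what makes the singular factor $|\xi|^{s-|\alpha|}$ dominated by the allowed $|\xi|^{-|\alpha|}$. Everything else is the routine radial-symbol derivative bookkeeping already invoked in Proposition~\ref{norm equivalence}.
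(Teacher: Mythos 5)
Your proposal is correct and is essentially the paper's own argument: the paper proves this corollary ``by the same argument'' as Proposition~\ref{norm equivalence}, namely by applying the H\"ormander--Mikhlin theorem (Theorem~\ref{thm: Hormander Mikhlin}) to the quotient symbols $\la\xi\ra^{-s}$ and $|\xi|^s\la\xi\ra^{-s}$, exactly as you do, with the $h$-uniformity coming for free since these are fixed symbols on $\R^d$ restricted to $\tph$. Your verification of \eqref{Hormander condition}, including the observation that $s\ge0$ tames the singularity of $|\xi|^s$ at the origin, fills in the details the paper leaves implicit.
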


\subsection{Endpoint Sobolev inequality}
We close this section deriving the endpoint Sobolev inequality, which improves Proposition \ref{Ineq: Sobolev infinity}, by the Littlewood-Paley inequality.

\begin{prop}[Endpoint Sobolev inequality]
	Let $h\in(0,1]$. Suppose that $1<p<q<\infty$ and $\frac{1}{q}=\frac{1}{p}-\frac{s}{d}$. Then, we have
	\begin{align*}
	\|f\|_{L_h^{q}} \ls \|f\|_{\dot{W}_h^{s,p}}.
	\end{align*}
\end{prop}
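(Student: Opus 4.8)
The plan is to derive this endpoint embedding from the Littlewood--Paley inequality (Theorem~\ref{thm:LP}) and Bernstein's inequality (Lemma~\ref{lem:Bernstein}), which between them trade the $s$ derivatives for exactly the change of Lebesgue exponent prescribed by $\frac1q=\frac1p-\frac sd$. Setting $g:=|\nabla_h|^sf$, the assertion is the uniform bound $\|f\|_{L_h^q}\lesssim\|g\|_{L_h^p}$. On the support of $\psi_N$ one has $|\xi|^s\sim(N/h)^s$, and the dyadically localized multiplier $\frac{|\xi|^s}{(N/h)^s}\psi_N$ satisfies the H\"ormander condition \eqref{Hormander condition} uniformly in $N$ and $h$; hence by Theorem~\ref{thm: Hormander Mikhlin} each frequency piece obeys $\|P_Ng\|_{L_h^r}\sim(N/h)^s\|P_Nf\|_{L_h^r}$ for every $1<r<\infty$, which is how the derivative $|\nabla_h|^s$ is converted into the gain supplied by Bernstein's inequality.

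The core of the argument uses the square function to sandwich an $\ell^2$-sum between the two Lebesgue norms. By Theorem~\ref{thm:LP},
\begin{equation*}
\|f\|_{L_h^q}\sim\Big\|\Big(\sum_N|P_Nf|^2\Big)^{1/2}\Big\|_{L_h^q}.
\end{equation*}
When $q\ge2$, Minkowski's inequality in $L_h^{q/2}$ gives $\big\|(\sum_N|P_Nf|^2)^{1/2}\big\|_{L_h^q}\le(\sum_N\|P_Nf\|_{L_h^q}^2)^{1/2}$, and Bernstein's inequality (Lemma~\ref{lem:Bernstein}) bounds each term by $\|P_Nf\|_{L_h^q}\lesssim(N/h)^{d(\frac1p-\frac1q)}\|P_Nf\|_{L_h^p}=(N/h)^s\|P_Nf\|_{L_h^p}\sim\|P_Ng\|_{L_h^p}$. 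When in addition $p\le2$, the reverse Minkowski inequality $(\sum_N\|P_Ng\|_{L_h^p}^2)^{1/2}\le\big\|(\sum_N|P_Ng|^2)^{1/2}\big\|_{L_h^p}$ together with Theorem~\ref{thm:LP} yields $(\sum_N\|P_Ng\|_{L_h^p}^2)^{1/2}\lesssim\|g\|_{L_h^p}$. Chaining these estimates proves $\|f\|_{L_h^q}\lesssim\|g\|_{L_h^p}=\|f\|_{\dot W_h^{s,p}}$ in the range $p\le2\le q$.

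The principal difficulty is the complementary range in which $p$ and $q$ lie on the same side of $2$, where the two Minkowski steps point in opposite directions; since the admissible exponents trace a segment of the fixed line $\frac1q=\frac1p-\frac sd$ lying outside the convex hull of the middle segment, this case cannot be recovered by interpolation from $p\le2\le q$. To treat all $1<p<q<\infty$ at once I would instead argue pointwise in the spirit of Hedberg. Writing $K_N:=(|\xi|^{-s}\psi_N)^\vee$, so that $P_Nf=K_N*g$, the scaling computations behind Lemma~\ref{lem:Bernstein} give both $\|K_N\|_{L_h^{p'}}\sim(N/h)^{d/q}$ and the pointwise domination $|(K_N*g)(x)|\lesssim(N/h)^{-s}\mathcal Mg(x)$ by a maximal function. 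Splitting $f=\sum_NP_Nf$ at a scale $N_0$ and summing the resulting geometric series yields $|f(x)|\lesssim(N_0/h)^{-s}\mathcal Mg(x)+(N_0/h)^{d/q}\|g\|_{L_h^p}$; optimizing in $N_0$ and using $s+\frac dq=\frac dp$ gives the Hedberg bound $|f(x)|\lesssim(\mathcal Mg(x))^{p/q}\|g\|_{L_h^p}^{1-p/q}$, after which raising to the power $q$, integrating, and invoking the $L_h^p$-boundedness of $\mathcal M$ (for $p>1$, via its weak-$(1,1)$ bound from Section~3 and the trivial $L_h^\infty$ bound) closes the estimate. The delicate point is precisely this maximal-function domination: the dyadic maximal function of Section~3 controls only dyadic averages, so one must either dominate the smooth centered kernel $K_N$ by its dyadic shells or set up the Hardy--Littlewood maximal function on $\Z_h^d$ and verify that its weak-$(1,1)$ bound is independent of $h$ after rescaling to $\Z^d$.
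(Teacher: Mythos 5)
Your proof is correct, and its first half is essentially the paper's own argument: the paper proves the case $p=2$ by exactly your chain (Littlewood--Paley, Minkowski for $q\ge 2$, Bernstein, then orthogonality), obtains $q=2$ by duality, and composes the two to cover $q>2>p$; your reverse-Minkowski step for $p\le 2$ simply packages those three cases into one computation. The genuine divergence is in the remaining range, where $p$ and $q$ lie on the same side of $2$. There the paper interpolates the already-established estimate against the trivial bound $\|f\|_{L_h^p}\lesssim\|f\|_{L_h^p}$. Your objection that interpolation cannot reach this range is correct only for Riesz--Thorin applied to the fixed operator $|\nabla_h|^{-s}$ (the admissible exponents lie on a slope-one line, which is preserved by convex combinations and by duality); the paper's interpolation also varies $s$, the trivial estimate being the point $s=0$, so it is really a Stein-type interpolation for the analytic family $|\nabla_h|^{-sz}$, whose justification on $\mathbb{Z}_h^d$ needs uniform $L_h^p$ bounds for the imaginary powers $|\nabla_h|^{i\tau}$ --- available from Theorem~\ref{thm: Hormander Mikhlin}, though the paper leaves this implicit. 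Your Hedberg route is a legitimate alternative that avoids interpolation altogether: the bounds $\|K_N\|_{L_h^{p'}}\lesssim (N/h)^{d/q}$ and $|(K_N*g)(x)|\lesssim (N/h)^{-s}\mathcal{M}g(x)$ follow from the integration-by-parts computation in Lemma~\ref{lem:Bernstein}, the two geometric series and the optimization in $N_0$ work as you say (the optimal $N_0$ is automatically $\lesssim 1$ because $\mathcal{M}g\le\|g\|_{L_h^\infty}\le h^{-d/p}\|g\|_{L_h^p}$), and raising $|f|\lesssim(\mathcal{M}g)^{p/q}\|g\|_{L_h^p}^{1-p/q}$ to the $q$-th power closes the estimate. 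The one ingredient you must supply --- and you correctly flag it --- is the Hardy--Littlewood maximal inequality on $\mathbb{Z}_h^d$ with constant independent of $h$: the dyadic maximal function of Section~3 does not dominate centered averages, so one proves the weak $(1,1)$ bound for the centered maximal function by the Vitali covering argument (rescaling to $\mathbb{Z}^d$ shows the constant is $h$-independent) and interpolates with the trivial $L_h^\infty$ bound. In exchange for this extra machinery, your argument treats all $1<p<q<\infty$ uniformly and yields a pointwise inequality stronger than the norm estimate; the paper's proof is shorter given its multiplier theorem, but rests on an interpolation step it does not spell out.
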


\begin{proof}
We first consider the case $p=2$.
By the Littlewood-Paley inequality (Theorem ~\ref{thm:LP}) and Bernstein's inequality (Lemma \ref{lem:Bernstein}), we get
\begin{align*}
\|f\|_{L_h^{q}} &\sim \left\| \left\{\sum_{N\leq 1} |P_N f|^2\right\}^{1/2} \right\|_{L_h^q}
\ls \left\{\sum_{N\leq 1} \|P_N f\|_{L_h^q}^2\right\}^{1/2}\\
&\ls \left\{\sum_{N\leq 1} \left(\frac{N}{h}\right)^s\|P_N f\|_{L_h^2}^2\right\}^{1/2}\sim \left\{\sum_{N\leq 1} \|P_N(|\nabla|^sf)\|_{L_h^2}^2\right\}^{1/2}\\
&\sim \||\nabla|^sf\|_{L_h^2}.
\end{align*}
Then the case $q=2$ follows from the standard duality arguments and Parseval's identity.

If $q>2>p$, by above two cases we have
$\| f\|_{L_h^{q}} \ls \| f\|_{\dot W_h^{s,p}}.$
By interpolating this with trivial estimate $ \| f\|_{L_h^p} \ls \| f\|_{L_h^p}$, we get the desired result.
\end{proof}

\section{Strichartz Estimates for Discrete Schr\"odinger Equations (Proof of Theorem \ref{thm: Strichartz for Schrodinger})}

In this section, we show Strichartz estimates for discrete Schr\"odinger equations. Now that harmonic analysis tools are at hand, their proof is reduced to the proof of the following frequency localized estimates.

\begin{prop}[Frequency localized dispersive estimate for discrete Schr\"odinger equations]\label{dispersive estimate for schrodinger}
Let $h\in(0,1]$. Then, for any dyadic number $N\in 2^{\mathbb{Z}}$ with $N\leq 1$, we have
\begin{equation}\label{eq: dispersive estimate for schrodinger}
\|e^{it\Delta_h} P_N u_0\|_{L_h^\infty} \lesssim \left(\frac{N}{h|t|}\right)^{d/3}\|u_0\|_{L_h^1}.
\end{equation}
\end{prop}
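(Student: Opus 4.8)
The plan is to write the frequency-localized solution as the oscillatory integral
\begin{equation*}
e^{it\Delta_h}P_Nu_0(x) = \big(I_{N;h}(t,\cdot)*u_0\big)(x),\quad
I_{N;h}(t,x) = \frac{1}{(2\pi)^d}\int_{\tph} e^{i(-\frac{4t}{h^2}\sum_j\sin^2(\frac{h\xi_j}{2})+x\cdot\xi)}\psi_N(\xi)\,d\xi,
\end{equation*}
so that by Young's inequality it suffices to establish the pointwise kernel bound $\|I_{N;h}(t,\cdot)\|_{L_h^\infty}\lesssim (\frac{N}{h|t|})^{d/3}$. Because the symbol $\psi_N$ tensorizes and the phase is a sum over coordinates, the integral factors as a product of $d$ one-dimensional oscillatory integrals, so I would reduce to the scalar estimate $|\int e^{i(-\frac{4t}{h^2}\sin^2(\frac{h\eta}{2})+x_j\eta)}\varphi(\frac{h\eta}{2\pi N})\,d\eta|\lesssim (\frac{N}{h|t|})^{1/3}$ and multiply the $d$ factors.

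First I would rescale to remove $h$. Setting $\eta = \frac{2\pi N}{h}\zeta$ (equivalently pulling out the support scale of $\psi_N$), the one-dimensional integral becomes $\frac{N}{h}\int e^{i\lambda\Phi(\zeta)}\varphi(\zeta)\,d\zeta$ with a large parameter $\lambda \sim \frac{N t}{h}\cdot\frac{N}{h}$—more precisely the amplitude $\frac{t}{h^2}$ multiplied by the $\xi$-scale—so that the effective phase is $\Phi(\zeta) = -\sin^2(\pi N\zeta) + (\text{linear})$ and $\varphi$ is supported where $|\zeta|\sim 1$, i.e. $|\eta|\sim \frac{N}{h}$. The key point is that on $\supp\psi_N$ with $N\leq 1$ we have $|\xi|\lesssim \frac{1}{h}$ and the Hessian of the phase $\frac{2t}{h^2}(1-\cos(h\xi_j))$ has second derivative $t\cos(h\xi_j)$ and third derivative $-th\sin(h\xi_j)$; the degeneracy $\xi_j=\pm\frac{\pi}{2h}$ that kills the $|t|^{-1/2}$ stationary-phase decay is excluded here only approximately, but the uniform $d/3$-type decay already present in \eqref{SK dispersion estimate} comes from the van der Corput lemma applied with a third-derivative lower bound.

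The main technical step is therefore a van der Corput estimate adapted to the $N$-localization. I would invoke the van der Corput lemma: if $|\psi''|\gtrsim\mu_2$ one gets decay $\mu_2^{-1/2}$, and if $|\psi'''|\gtrsim\mu_3$ one gets $\mu_3^{-1/3}$, with constants controlled by the total variation of the amplitude. On $\supp\varphi(\frac{h\cdot}{2\pi N})$ the relevant derivatives of $\phi(\xi)=-\frac{2t}{h^2}(1-\cos(h\xi_j))$ are $\partial_{\xi_j}^2\phi = -t\cos(h\xi_j)$ and $\partial_{\xi_j}^3\phi = th\sin(h\xi_j)$, which are of respective sizes $\sim|t|$ and $\sim|t|hN$ on the localized region. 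Balancing the worst case—where the second derivative vanishes—against the third-derivative bound yields decay of order $(\frac{N}{h}|t|h\cdot\frac{N}{h})^{-1/3}$ type; after carefully tracking the scaling factor $\frac{N}{h}$ coming out front from the measure of the support, the product over $d$ coordinates gives exactly $(\frac{N}{h|t|})^{d/3}$. The hard part will be doing this bookkeeping uniformly in both $N$ and $h$ so that no hidden $h$-dependence survives, and ensuring the van der Corput constants are genuinely independent of the localization parameters; I expect the cleanest route is to rescale to $\mathbb Z^d$ first (the scaling used for the high-frequency reduction in the introduction) and apply the classical one-dimensional stationary-phase/van der Corput bound with the third derivative on the fixed lattice, then scale back.
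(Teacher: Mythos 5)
Your overall route is genuinely different from the paper's, and it is viable in outline, but two steps need repair before it closes. For comparison: the paper never factorizes into one-dimensional integrals. It treats $N\in\{1,\tfrac12,\tfrac14\}$ by rescaling $h\xi\mapsto\xi$ to $\mathbb{Z}^d$ and invoking the Stefanov--Kevrekidis bound \eqref{SK dispersion estimate} as a black box; for $N\le\tfrac18$ the Hessian is non-degenerate on $\supp\psi_N$, so multidimensional stationary phase gives the \emph{stronger} decay $|I_{N,t}(x)|\lesssim|t|^{-d/2}$, and the factor $(N/h)^{d/3}$ is then produced at the operator level by interpolating with unitarity to an $L_h^{6/5}\to L_h^6$ bound with decay $|t|^{-d/3}$ and applying Bernstein on both sides, each application costing $(N/h)^{d/6}$. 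Your plan re-proves a frequency-localized Stefanov--Kevrekidis estimate from scratch, which is more self-contained and treats all $N$ in one framework.

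The first gap: the factorization step fails as stated, because $\psi_N$ does \emph{not} tensorize. By construction $\psi_N(\xi)=\varphi(\frac{h\xi}{2\pi N})$ with $\varphi=\phi-\phi(2\cdot)$, and a difference of two tensor products is not a tensor product. The fix is standard: choose $\phi(\xi)=\prod_{j=1}^d\phi_1(\xi_j)$ and write $I_{N,t}$ as the difference of the two integrals with cube cutoffs $\phi(\frac{h\xi}{2\pi N})$ and $\phi(\frac{h\xi}{\pi N})$; each of those integrals does split into a product of $d$ one-dimensional integrals, and it suffices to bound each factor by $(\frac{N}{h|t|})^{1/3}$.

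The second gap is the van der Corput bookkeeping itself: the third derivative cannot produce the $N^{1/3}$ gain when $N\ll1$. On the cube support $|\xi_j|\le 4\pi N/h$ the third derivative $|\partial_{\xi_j}^3\varphi_t|=2|t|h\,|\sin(h\xi_j)|$ has no uniform lower bound (it vanishes at $\xi_j=0$), and even where $|\sin(h\xi_j)|\sim N$, van der Corput with the third derivative yields only $(|t|hN)^{-1/3}$, which misses the target $(\frac{N}{h|t|})^{1/3}=N^{1/3}(h|t|)^{-1/3}$ by the factor $N^{-2/3}$. The correct mechanism for small $N$ (say $N\le\tfrac1{16}$, so that $h|\xi_j|\le\tfrac\pi4$ on the support) is: the second derivative satisfies $|\partial_{\xi_j}^2\varphi_t|=2|t||\cos(h\xi_j)|\gtrsim|t|$, so van der Corput gives $|t|^{-1/2}$ per coordinate, and the gain comes from interpolating this against the trivial support bound,
\begin{equation*}
\min\Bigl(\tfrac{N}{h},\,|t|^{-1/2}\Bigr)\le\Bigl(\tfrac{N}{h}\Bigr)^{1/3}|t|^{-1/3}=\Bigl(\tfrac{N}{h|t|}\Bigr)^{1/3}.
\end{equation*}
The third derivative is needed only for the finitely many remaining scales $N\in\{\tfrac18,\tfrac14,\tfrac12,1\}$, where the support reaches the degenerate points $h\xi_j=\pm\tfrac\pi2$: there one splits the support into $O(1)$ intervals on which either $|\cos(h\xi_j)|\gtrsim1$ (use the second derivative plus the bound above) or $|\sin(h\xi_j)|\gtrsim1$ (use the third derivative, giving $(h|t|)^{-1/3}$, which equals the target since $N\sim1$). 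With these two corrections, your reduction via Young's inequality and the rescaling to $\mathbb{Z}^d$ go through and the product over coordinates gives \eqref{eq: dispersive estimate for schrodinger}; note that this kernel-level $\min$-interpolation is precisely the counterpart of the paper's operator-level Bernstein-plus-interpolation trick.
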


\begin{proof}[Proof of Theorem \ref{thm: Strichartz for Schrodinger}, assuming Proposition \ref{dispersive estimate for schrodinger}]
By Proposition \ref{dispersive estimate for schrodinger} and the trivial inequality
$$\|e^{it\Delta_h} P_N u_0\|_{L_h^2}=\|P_N u_0\|_{L_h^2}\leq\|u_0\|_{L_h^2},$$
it follows from Keel-Tao \cite{KT98} that the frequency localized Strichartz estimate
$$\| e^{it\Delta_h}P_N u_0\|_{L_t^q(\mathbb{R};L_h^r)}\ls \left(\frac{N}{h}\right)^{\frac{1}{q}}\|u_0\|_{L_h^2}$$
holds for all admissible pairs $(q,r)$ (see \eqref{S-admissible}). Let $\tilde{\psi}$ be a smooth function such that $\tilde{\psi}\equiv 1$ on $\supp \psi$, and define the operator $\tilde{P}_N$ as the Fourier multiplier of symbol $\tilde{\psi}(\frac{h}{N}\cdot)$. Then, by the Littlewood-Paley inequality and the Minkowskii inequality with $q,r\geq 2$, we prove that
\begin{align*}
\|e^{it\Delta_h}u_0\|_{L_t^q(\mathbb{R};L_h^r)}&\sim \left\|\left\{\sum_{N\leq 1} |e^{it\Delta_h}P_Nu_0|^2\right\}^{1/2}\right\|_{L_t^q(\mathbb{R};L_h^r)}\\
&\leq \left\{\sum_{N\leq 1} \|e^{it\Delta_h}P_Nu_0\|_{L_t^q(\mathbb{R};L_h^r)}^2\right\}^{1/2}=\left\{\sum_{N\leq 1} \|e^{it\Delta_h}P_N\tilde{P}_Nu_0\|_{L_t^q(\mathbb{R};L_h^r)}^2\right\}^{1/2}\\
&\lesssim\left\{\sum_{N\leq 1}\left(\frac{N}{h}\right)^{\frac{2}{q}} \|\tilde{P}_Nu_0\|_{L_h^2}^2\right\}^{1/2}\sim\left\{\sum_{N\leq 1} \|\tilde{P}_N(|\nabla|^{\frac{1}{q}}u_0)\|_{L_h^2}^2\right\}^{1/2}\\
&\sim \||\nabla|^{\frac{1}{q}}u_0 \|_{L_h^2}.
\end{align*}
The second inequality in the theorem can be proved by the same way.
\end{proof}

\begin{proof}[Proof of Proposition~\ref{dispersive estimate for schrodinger}]
By the Fourier transform, a solution to a discrete Schr\"odinger equation is represented as 
\begin{equation}\label{eq:propagator}
\begin{aligned}
e^{it\Delta_h} P_N u_0 (x)
 &=\frac{1}{(2\pi)^d}\int_{\tph} e^{-\frac{4it}{h^2} \sum_{j=1}^d \sin^2 \frac{h\xi_j}{2}}\psi(\tfrac{h\xi}{N})\hat{u}_0(\xi) e^{ix \cdot\xi}d\xi \\
 &=h^d\sum_{y\in\Z_h^d} u_0(y) \left\{\frac{1}{(2\pi)^d} \int_{\tph} e^{-i(\frac{4t}{h^2}\sum_{i=1}^d  \sin^2 \frac{h\xi_j}{2}-(x-y) \cdot \xi)}\psi(\tfrac{h\xi}{N})d\xi\right\}\\
&=(I_{N,t}*u_0)(x)
\end{aligned}
\end{equation}
for all $x\in\mathbb{Z}_h^d$, where
\begin{equation}\label{I_N}
I_{N,t}(x):=\frac{1}{(2\pi)^d} \int_{\tph} e^{-i\varphi_t(\xi)}\psi(\tfrac{h\xi}{N}) d\xi,
\end{equation}
where
$$\varphi_t(\xi):=\frac{4t}{h^2}\sum_{i=1}^d \sin^2 \frac{h\xi_j}{2}-x \cdot \xi=\frac{2t}{h^2}\sum_{i=1}^d (1-\cos h\xi_j)-x \cdot \xi.$$
We observe that 
$$\partial_{\xi_j}\partial_{\xi_k}\varphi_t(\xi)=2t\cos h\xi_j\cdot\delta_{jk},$$
and so the Hessian $H\varphi_t$ is degenerate if and only if $\xi_j=\pm\frac{\pi}{2h}$ for some $j$.

Suppose that $N=1$ or $\frac{1}{2}$ or $\frac14$. Then, by scaling $h\xi\mapsto\xi$, we have 
\begin{align*}
I_{N,t}(hx)&=\frac{1}{(2\pi)^d} \int_{\tph} e^{-i(\frac{2t}{h^2}\sum_{i=1}^d  (1-\cos h\xi_j)-hx \cdot \xi)}\psi_{N}(\xi) d\xi\\
&=\frac{1}{(2\pi h)^d} \int_{\mathbb{T}^d} e^{-i(\frac{2t}{h^2}\sum_{i=1}^d  (1-\cos \xi_j)-x \cdot \xi)}\psi(\tfrac{\xi}{N}) d\xi\\
&=\frac{1}{h^d} \left(e^{i\frac{t}{h^2}\Delta_1}(\mathcal{F}_1^{-1}\psi(\tfrac{\cdot}{N}))\right)(x)
\end{align*}
for any $x\in\mathbb{Z}^d$, where $\mathcal{F}_1^{-1}$ is the inverse Fourier transform on $\mathbb{Z}^d$. Hence, it follows from the dispersion estimate on $\mathbb{Z}^d$ (see \eqref{SK dispersion estimate}) that for all $x\in\mathbb{Z}_h^d$, 
$$|I_{N,t}(x)|\lesssim\frac{1}{h^d}\cdot\left(\frac{h^2}{|t|}\right)^{d/3}\|\mathcal{F}_1^{-1}\psi(\tfrac{\cdot}{N})\|_{L_h^1}\lesssim\frac{1}{(h|t|)^{d/3}}.$$
Therefore, going back to \eqref{eq:propagator}, we obtain \eqref{eq: dispersive estimate for schrodinger}.

If $N\leq\frac{1}{8}$, then on the support of $\psi(\tfrac{h}{N}\cdot)$, the Hessian of the phase function is non-degenerate and moreover
\begin{equation}\label{Hessian lower bound}
|\textup{det}H\varphi_t|=\left|\prod_{j=1}^d 2t\cos h\xi_j\right|\gtrsim |t|^{d}.
\end{equation}
Therefore, it follows from the standard oscillatory integral estimate that $|I_{N,t}(x)|\lesssim |t|^{-d/2}$ and 
$$\|e^{it\Delta_h} P_N u_0\|_{L_h^\infty} \lesssim \frac{1}{|t|^{d/2}}\|u_0\|_{L_h^1}\quad\underset{\textup{interpolation}}\Longrightarrow\quad\|e^{it\Delta_h} P_N u_0\|_{L_h^r} \lesssim \frac{1}{|t|^{d(\frac{1}{2}-\frac{1}{r})}}\|u_0\|_{L_h^{r'}}\quad r\geq 2.$$
Finally, inserting $\tilde{P}_N$ defined in the proof of Theorem \ref{thm: Strichartz for Schrodinger} and then employing Bernstein's inequality, we prove that
\begin{align*}
\|e^{it\Delta_h} P_N u_0\|_{L_h^\infty}&=\|\tilde{P}_N(e^{it\Delta_h} P_N u_0)\|_{L_h^\infty}\\
&\lesssim \left(\frac{N}{h}\right)^{\frac{d}{6}}\|e^{it\Delta_h} P_N u_0\|_{L_h^6}= \left(\frac{N}{h}\right)^{\frac{d}{6}}\|e^{it\Delta_h} P_N\tilde{P}_N u_0\|_{L_h^6}\\
&\lesssim \left(\frac{N}{h}\right)^{\frac{d}{6}}\cdot\frac{1}{|t|^{d/3}}\|\tilde{P}_N u_0\|_{L_h^{6/5}}\lesssim \left(\frac{N}{h}\right)^{\frac{d}{3}}\cdot\frac{1}{|t|^{d/3}}\|u_0\|_{L_h^1}.
\end{align*}
\end{proof}

\section{Uniform boundedness for Discrete Nonlinear Schr\"odinger Equations (Proof of Theorem \ref{thm: uniform bound})}

In this section, we give a simple proof of global well-posedness for the discrete nonlinear Schr\"odinger equation \eqref{eq: discrete NLS}, and then establish improved uniform boundedness of solutions employing Strichartz estimates and harmonic analysis tools developed in the previous sections.

\begin{prop}[Global well-posedness]\label{GWP}
Let $h>0$ and $p>1$. For any initial data $u_{h,0}\in L_h^2$, there exists a unique global strong solution to the discrete nonlinear Schr\"odinger equation \eqref{eq: discrete NLS},
\begin{equation}\label{Duhamel}
u_h(t)=e^{it\Delta_h}u_{h,0}- i\lambda\int_0^t e^{i(t-t_1)\Delta_h}(|u_h|^{p-1}u_h)(t_1)dt_1\in C_t(\mathbb{R}; L_h^2).
\end{equation}
Moreover, it conserves the mass \eqref{Conservation:Mass} and the energy \eqref{Conservation:Energy}.
\end{prop}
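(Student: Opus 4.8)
The plan is to exploit the fact that, for a \emph{fixed} $h>0$, the discrete Laplacian $\Delta_h$ is a \emph{bounded} operator on $L_h^2$. Indeed, its Fourier symbol $-\frac{4}{h^2}\sum_{j=1}^d\sin^2\!\big(\frac{h\xi_j}{2}\big)$ is bounded in absolute value by $\frac{4d}{h^2}$, so $\|\Delta_h\|_{L_h^2\to L_h^2}\leq\frac{4d}{h^2}$ by Plancherel. Consequently \eqref{eq: discrete NLS} is nothing but an ordinary differential equation
\[
\partial_t u_h=F(u_h):=i\Delta_h u_h+i\lambda|u_h|^{p-1}u_h
\]
posed in the Banach space $L_h^2$, and the Duhamel formulation \eqref{Duhamel} is equivalent to it. I would therefore run the Cauchy--Lipschitz (Picard) scheme for Banach-space-valued ODEs, which has the added benefit of producing a solution that is automatically of class $C^1$ in $t$ with values in $L_h^2$; this is exactly the regularity that legitimizes the conservation-law computations below.

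The only estimate that must be checked is that $F$ is Lipschitz on bounded subsets of $L_h^2$. The linear part $i\Delta_h$ is globally Lipschitz by the boundedness just noted. For the nonlinearity, recall first the elementary embedding $\|v\|_{L_h^\infty}\leq h^{-d/2}\|v\|_{L_h^2}$, which holds simply because $h^d|v(x)|^2\leq\|v\|_{L_h^2}^2$ for each $x$. Next, since $p>1$, the map $g(z)=|z|^{p-1}z$ is $C^1$ on $\mathbb{C}$ with $|Dg(z)|\lesssim_p|z|^{p-1}$, whence the pointwise bound $|g(a)-g(b)|\lesssim_p\max(|a|,|b|)^{p-1}|a-b|$. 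Summing this over the lattice and inserting the $L_h^\infty$-embedding gives, for $u,v$ in the ball of radius $R$ in $L_h^2$,
\[
\big\||u|^{p-1}u-|v|^{p-1}v\big\|_{L_h^2}\lesssim_p\big(\|u\|_{L_h^\infty}^{p-1}+\|v\|_{L_h^\infty}^{p-1}\big)\|u-v\|_{L_h^2}\lesssim_{p,h}R^{p-1}\|u-v\|_{L_h^2}.
\]
Thus $F$ is Lipschitz on balls with a constant depending only on $h$ and $R$, and Picard iteration produces a unique maximal solution $u_h\in C^1_t L_h^2$ whose lifespan is bounded below in terms of $h$ and $\|u_{h,0}\|_{L_h^2}$ alone.

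Conservation of mass I would obtain by differentiating, using the $C^1$ regularity,
\[
\tfrac{d}{dt}\tfrac12\|u_h\|_{L_h^2}^2=\re\la\partial_t u_h,u_h\ra=\re\big(i\la\Delta_h u_h,u_h\ra+i\lambda\la|u_h|^{p-1}u_h,u_h\ra\big)=0,
\]
because $\Delta_h$ is self-adjoint, so $\la\Delta_h u_h,u_h\ra\in\mathbb{R}$, while $\la|u_h|^{p-1}u_h,u_h\ra=h^d\sum_x|u_h|^{p+1}\in\mathbb{R}$; multiplying by $i$ and taking the real part annihilates both. Global existence is then immediate from the blow-up alternative: the local lifespan depends only on $h$ and $\|u_{h,0}\|_{L_h^2}$, and mass conservation keeps $\|u_h(t)\|_{L_h^2}$ equal to $\|u_{h,0}\|_{L_h^2}$, so the solution may be continued by a fixed time step indefinitely, forcing the maximal interval to be all of $\mathbb{R}$.

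Energy conservation follows in the same spirit: differentiating \eqref{Conservation:Energy} and pairing against $\partial_t u_h$ (using self-adjointness of $\Delta_h$ for the kinetic term and $\frac{d}{dt}|u_h|^{p+1}=(p+1)|u_h|^{p-1}\re(\overline{u_h}\,\partial_t u_h)$ for the potential term), then substituting the equation $\partial_t u_h=i(\Delta_h u_h+\lambda|u_h|^{p-1}u_h)$; the kinetic and nonlinear contributions cancel, exactly as in the continuum case. I expect no serious obstacle in any of this, precisely because $h$ is fixed and the equation is genuinely ``soft'' (a semilinear ODE in $L_h^2$). The only two points that require care are the $C^1$-in-time regularity needed to differentiate the conserved quantities, which the ODE formulation supplies for free, and the verification that the local existence time depends solely on the conserved $L_h^2$-norm, which is what upgrades local well-posedness to global.
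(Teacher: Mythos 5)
Your proposal is correct and rests on the same two pillars as the paper's proof: the $h$-dependent embedding $\|v\|_{L_h^\infty}\le h^{-d/2}\|v\|_{L_h^2}$, which makes the nonlinearity Lipschitz on balls of $L_h^2$, and mass conservation, which upgrades local to global existence. The only genuine difference is where the fixed point is taken. The paper contracts the Duhamel map $\Phi(u)=e^{it\Delta_h}u_{h,0}-i\lambda\int_0^te^{i(t-t_1)\Delta_h}(|u|^{p-1}u)(t_1)\,dt_1$ on a ball of $C(I;L_h^2)$, using unitarity of $e^{it\Delta_h}$ together with a fundamental-theorem-of-calculus version of your pointwise bound for $z\mapsto|z|^{p-1}z$; you instead absorb the bounded operator $i\Delta_h$ into the vector field and invoke Cauchy--Lipschitz for ODEs in a Banach space. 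For fixed $h$ these formulations are equivalent (variation of constants for a bounded generator), but since the proposition asserts the solution precisely in the form \eqref{Duhamel}, you should record that equivalence explicitly rather than in passing. Your route buys $C^1_t L_h^2$ regularity for free, which honestly justifies differentiating the mass and the energy --- a step the paper disposes of with ``the conservation laws can be proved as usual''; the paper's route has the advantage of being the formulation that survives in the rest of the paper, where uniform-in-$h$ bounds are sought and the operator bound $\|\Delta_h\|_{L_h^2\to L_h^2}\lesssim h^{-2}$ is useless. One caveat, affecting your sketch and the paper equally: with the signs of \eqref{eq: discrete NLS} and \eqref{Conservation:Energy} exactly as written, the kinetic and nonlinear contributions do not cancel; the conserved quantity is $\frac12\|\sqrt{-\Delta_h}\,u\|_{L_h^2}^2-\frac{\lambda}{p+1}\|u\|_{L_h^{p+1}}^{p+1}$ (equivalently, replace $\lambda$ by $-\lambda$ in the equation), so this is a sign typo inherited from the paper rather than a gap introduced by your argument.
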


\begin{proof}
We prove local well-posedness by a standard contraction mapping argument and the trivial embedding $L_h^2\hookrightarrow L_h^\infty$, that is, nothing but $\ell^2\hookrightarrow \ell^\infty$ for sequences however whose implicit constant depends on $h>0$.

Let $I=[-T,T]$ with small $T>0$ to be chosen later. We define the nonlinear mapping
$$\Phi(u)=\Phi_{u_{h,0}}({ u }):=e^{it\Delta_h}u_{h,0}-i\lambda\int_0^t e^{i(t-t_1)\Delta_h}(|u|^{p-1}u)(t_1)dt_1.$$
Then, by unitarity of the linear propagator $e^{it\Delta_h}$ and the inequality $\|u\|_{L_h^\infty}\leq C_h\|u\|_{L_h^2}$, we get
\begin{align*}
\|\Phi(u)\|_{C(I;L_h^2)}&\leq\|u_{h,0}\|_{L_h^2}+\lambda\||u|^{p-1}u\|_{L_t^1(I; L_h^2)}\\
&\leq \|u_{h,0}\|_{L_h^2}+2\lambda T\|u\|_{C(I;L_h^\infty)}^{p-1}\|u\|_{C(I;L_h^2)}\\
&\leq \|u_{h,0}\|_{L_h^2}+2\lambda C_h^{p-1}T\|u\|_{C(I;L_h^2)}^{p}.
\end{align*}
Similarly for the difference, using the fundamental theorem of calculus
\begin{equation}\label{FTC application}
\begin{aligned}
|u|^{p-1}u-|v|^{p-1}v&=\int_0^1\frac{d}{ds}\left(|su+(1-s)v|^{p-1}(su+(1-s)v)\right)ds\\
&=\frac{p+1}{2}\int_0^1|su+(1-s)v|^{p-1}ds\cdot(u-v)\\
&\quad+\frac{p-1}{2}\int_0^1|su+(1-s)v|^{p-3}(su+(1-s)v)^2ds\cdot\overline{u-v},
\end{aligned}
\end{equation}
we show that 
\begin{align*}
\|\Phi(u)-\Phi(v)\|_{C(I;L_h^2)}&\leq \lambda\||u|^{p-1}u-|v|^{p-1}v\|_{L_t^1(I; L_h^2)}\\
&\leq 2\lambda p T\int_0^1\|su+(1-s)v\|_{C(I;L_h^\infty)}^{p-1}ds\cdot \|u-v\|_{C(I;L_h^2)}\\
&\leq 2\lambda pC_h^{p-1}T\left(\|u\|_{C(I;L_h^2)}+\|v\|_{C(I;L_h^2)}\right)^{p-1}\|u-v\|_{C(I;L_h^2)}.
\end{align*}
Let $R\geq 2\|u_{h,0}\|_{L_h^2}$. Then, taking small $T>0$ depending on $R$ and $C_h$, we prove that $\Phi$ is contractive on a ball of radius $R$ centered at zero in $C(I;L_h^2)$. Thus, the equation \eqref{eq: discrete NLS} has a unique strong solution, denoted by $u_h(t)$.

The conservation laws can be proved as usual by differentiating the mass and the energy, substituting $\partial_t u_h$ by the equation and then doing summation by parts. Note that unlike the Euclidean domain, the Laplacian $\Delta_h$ is bounded on $L_h^2$, and thus the energy is properly defined for $L_h^2$-solutions.

The mass conservation prevents a solution to blow up in $L_h^2$ in finite time. Therefore, $u_h(t)$ exists globally in time.
\end{proof}

Next, we will show the improved uniform boundedness (Theorem \ref{thm: uniform bound}). To this end, we need the following nonlinear estimate.

\begin{lem}
Suppose $p>1$. Then,
\begin{equation}\label{nonlinear estimate 1}
\||u_h|^{p-1}u_h\|_{L_t^1(I;H_h^1)}\lesssim \|u_h\|_{L_t^{p-1}(I;L_h^\infty)}^{p-1}\|u_h\|_{C(I;H_h^1)}.
\end{equation}
\end{lem}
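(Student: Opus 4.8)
The plan is to reduce everything to a pointwise-in-time bound of the form $\||u_h|^{p-1}u_h\|_{H_h^1}\ls \|u_h\|_{L_h^\infty}^{p-1}\|u_h\|_{H_h^1}$ and then integrate in time. Write $F(z)=|z|^{p-1}z$. By the inhomogeneous Sobolev norm equivalence established in Section~4 (which gives $\|f\|_{H_h^1}\sim\|f\|_{L_h^2}+\sum_{j=1}^d\|D_{j;h}^+f\|_{L_h^2}$), it suffices to control $\|F(u_h)\|_{L_h^2}$ and $\sum_{j=1}^d\|D_{j;h}^+F(u_h)\|_{L_h^2}$ separately. The zeroth-order term is immediate from H\"older: since $|F(u_h)|=|u_h|^p$,
\begin{equation*}
\|F(u_h)\|_{L_h^2}=\||u_h|^p\|_{L_h^2}\le \|u_h\|_{L_h^\infty}^{p-1}\|u_h\|_{L_h^2}.
\end{equation*}

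The heart of the matter is a discrete Leibniz-type estimate for the first-order difference. Because the nonlinearity is local (no fractional derivative is involved), $D_{j;h}^+$ acts on $F(u_h)$ simply as a difference quotient, $D_{j;h}^+F(u_h)(x)=h^{-1}\big(F(u_h(x+he_j))-F(u_h(x))\big)$. I would apply the fundamental theorem of calculus identity \eqref{FTC application} with $a=u_h(x+he_j)$ and $b=u_h(x)$, which yields the pointwise bound
\begin{equation*}
|F(a)-F(b)|\le p\int_0^1|sa+(1-s)b|^{p-1}\,ds\,|a-b|\ls \big(|a|^{p-1}+|b|^{p-1}\big)|a-b|.
\end{equation*}
For $1<p<3$ the factor $|sa+(1-s)b|^{p-3}(sa+(1-s)b)^2$ is read off through its modulus $|sa+(1-s)b|^{p-1}$, so no singularity arises. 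Dividing by $h$ gives $|D_{j;h}^+F(u_h)(x)|\ls \big(|u_h(x+he_j)|^{p-1}+|u_h(x)|^{p-1}\big)|D_{j;h}^+u_h(x)|$.

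Taking the $L_h^2$ norm in $x$, pulling out the $L_h^\infty$ factor by H\"older, and using translation invariance of the $L_h^\infty$ norm (so that $\||u_h(\cdot+he_j)|^{p-1}\|_{L_h^\infty}=\|u_h\|_{L_h^\infty}^{p-1}$), I obtain $\|D_{j;h}^+F(u_h)\|_{L_h^2}\ls \|u_h\|_{L_h^\infty}^{p-1}\|D_{j;h}^+u_h\|_{L_h^2}$. Summing in $j$ and combining with the zeroth-order bound and the norm equivalence gives the pointwise-in-time inequality $\|F(u_h)\|_{H_h^1}\ls \|u_h\|_{L_h^\infty}^{p-1}\|u_h\|_{H_h^1}$. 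Finally I integrate over $I$ and apply H\"older in time, estimating $\|u_h(t)\|_{H_h^1}$ by its supremum:
\begin{equation*}
\|F(u_h)\|_{L_t^1(I;H_h^1)}\ls \int_I\|u_h(t)\|_{L_h^\infty}^{p-1}\|u_h(t)\|_{H_h^1}\,dt\le \|u_h\|_{C(I;H_h^1)}\int_I\|u_h(t)\|_{L_h^\infty}^{p-1}\,dt,
\end{equation*}
and the last time integral is exactly $\|u_h\|_{L_t^{p-1}(I;L_h^\infty)}^{p-1}$, which is the claim.

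The only genuinely delicate point is the discrete Leibniz estimate of the second paragraph, and even this is mild: locality of the nonlinearity makes $D_{j;h}^+$ an exact difference, so no commutator or fractional Leibniz rule is needed, and the potential non-smoothness of $z\mapsto|z|^{p-1}z$ at the origin for non-integer $p$ is absorbed once we pass to moduli in the integral representation \eqref{FTC application}. Everything else is H\"older in space and time together with the Section~4 norm equivalence.
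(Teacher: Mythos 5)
Your proof is correct and follows essentially the same route as the paper's: the inhomogeneous norm equivalence $\|f\|_{H_h^1}\sim\|f\|_{L_h^2}+\sum_j\|D_{j;h}^+f\|_{L_h^2}$, the fundamental theorem of calculus identity \eqref{FTC application} applied to the exact difference $D_{j;h}^+(|u_h|^{p-1}u_h)$, and H\"older in space and time. The only difference is organizational — you prove the fixed-time bound $\||u_h|^{p-1}u_h\|_{H_h^1}\lesssim\|u_h\|_{L_h^\infty}^{p-1}\|u_h\|_{H_h^1}$ and then integrate, whereas the paper performs the time H\"older directly inside the mixed-norm estimate — which changes nothing of substance.
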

\begin{proof}
By the norm equivalence (Proposition \ref{norm equivalence}), we write 
$$\||u_h|^{p-1}u_h\|_{L_t^1(I;H_h^1)}\sim \||u_h|^{p-1}u_h\|_{L_t^1(I;L_h^2)}+\sum_{j=1}^d\|D_{j;h}^+(|u_h|^{p-1}u_h)\|_{L_t^1(I;L_h^2)}.$$
Then, applying the fundamental theorem of calculus to 
$$D_{j;h}^+(|u|^{p-1}u)=\frac{(|u|^{p-1}u)(x+he_j)-(|u|^{p-1}u)(x)}{h}$$
as in \eqref{FTC application}, one can bound $\|D_{j;h}^+(|u_h|^{p-1}u_h)\|_{L_t^1(I;L_h^2)}$ by 
\begin{align*}
&p \int_0^1\|su_h(x+he_j)+(1-s)u_h(x)\|_{L_t^{p-1}(I;L_h^\infty)}^{p-1}ds\cdot \left\|\frac{u_h(x+he_j)-u_h(x)}{h}\right\|_{C(I;L_h^2)}\\
&\leq p\left(\|u_h(\cdot+he_j)\|_{L_t^{p-1}(I;L_h^\infty)}+\|u_h\|_{L_t^{p-1}(I;L_h^\infty)}\right)^{p-1}\|D_{j;h}^+u_h\|_{C(I;L_h^2)}\\
&= p2^{p-1} \|u_h\|_{L_t^{p-1}(I;L_h^\infty)}^{p-1}\|D_{j;h}^+u_h\|_{C(I;L_h^2)}.
\end{align*}
Therefore, by the norm equivalence again, we obtain 
\begin{equation*}
\||u_h|^{p-1}u_h\|_{L_t^1(I;H_h^1)}\lesssim \|u_h\|_{L_t^{p-1}(I;L_h^\infty)}^{p-1}\|u_h\|_{C(I;H_h^1)},
\end{equation*}
where the implicit constant is independent of $h\in(0,1]$.
\end{proof}

\begin{proof}[Proof of Theorem \ref{thm: uniform bound}]
Let $I=[-\tau,\tau]$ be a sufficiently small interval. We apply Strichartz estimates (Theorem \ref{thm: Strichartz for Schrodinger}) to the solution \eqref{Duhamel} to get 
\begin{equation}\label{uniform bound proof}
\|u_h\|_{S^1(I)}\leq C\|u_{h,0}\|_{H_h^1}+C\lambda\||u_h|^{p-1}u_h\|_{L_t^1(I;H_h^1)},
\end{equation}
where $C>0$ is a uniform constant.

Next, we claim that there is $\alpha>0$ such that 
\begin{equation}\label{improved uniform bound claim}
\|u_h\|_{L_t^{p-1}(I;L_h^\infty)}\lesssim \tau^\alpha \|u_h\|_{S^1(I)}.
\end{equation}
Indeed, if $d=2,3$, then by the assumption $p<1+\frac{4}{d-2}$, there exists small $\delta>0$ such that $\alpha:=\frac{1}{p-1}-\frac{d-2(1-\delta)}{4}>0$. Hence, applying the H\"older inequality and the Sobolev inequality, and then using that $(\frac{4}{d-2(1-\delta))},\frac{4d}{6(1-\delta)-d})$ is admissible, we get
\begin{equation}\label{not high d}
\begin{aligned}
\|u_h\|_{L_t^{p-1}(I;L_h^\infty)}&\leq (2\tau)^\alpha\|u_h\|_{L_t^{\frac{4}{d-2(1-\delta)}}(I; L_h^\infty)}\\
&\lesssim \tau^\alpha\|u_h\|_{L_t^{\frac{4}{d-2(1-\delta)}}(I; W_h^{\frac{6-d-2\delta}{4}, \frac{4d}{6(1-\delta)-d}})}\\
&\leq \tau^\alpha \|u_h\|_{S^1(I)}.
\end{aligned}
\end{equation}
By the same way but with the one-dimensional Sobolev inequality, one can prove the claim \eqref{improved uniform bound claim}.

Inserting the bound \eqref{improved uniform bound claim} in \eqref{nonlinear estimate 1}, we get
$$\||u_h|^{p-1}u_h\|_{L_t^1(I;H_h^1)}\lesssim \tau^{\alpha(p-1)} \|u_h\|_{S^1(I)}^p,$$
and going back to \eqref{uniform bound proof}, we obtain that 
$$\|u_h\|_{S^1(I)}\leq C \|u_{h,0}\|_{H_h^1}+\tilde{C}\tau^{\alpha(p-1)}\|u_h\|_{S^1(I)}^p.$$
Therefore, we may increase $\tau$ up to $\frac{1}{(2C\|u_{h,0}\|_{H_h^1})^{1/\alpha}}\frac{1}{(2\tilde{C})^{1/\alpha(p-1)}}$, keeping the bound
\begin{equation}\label{uniform bound on I}
\|u_h\|_{S^1(I)}\leq 2C \|u_{h,0}\|_{H_h^1}.
\end{equation}

It remains to show global-in-time bound $(ii)$. If $\lambda>0$ and $\frac{1}{p}>\max\{\frac{d-2}{d+2},0\}$, then by the energy conservation laws, any solution $u_h(t)$ satisfies 
$$\frac{1}{2}\|u_h(t)\|_{\dot{H}_h^1}^2\leq E_h(u_h(t))=E_h(u_{h,0}).$$
On the other hand, if $\lambda<0$ and $p<1+\frac{4}{d}$, then it follows from the Gagliardo-Nirenberg inequality and the mass and the energy conservation laws that
	\begin{align*}
	E_h(u_{h,0})&=E_h(u_h(t))\\
	&\ge \frac12 \|u_h(t) \|_{\dot{H}_h^1}^2 -C\left(\| u_{h}(t)\|_{L_h^2}^{1-\frac{d(p-1)}{2(p+1)}} \| u_h(t) \|_{\dot{H}_h^1}^{\frac{d(p-1)}{2(p+1)}}\right)^{p+1} \\
	&\ge \frac12 \|u_h(t) \|_{\dot{H}_h^1}^2 -C\| u_{h,0}\|_{L_h^2}^{p+1-\frac{d(p-1)}{2}} \| u_h(t) \|_{\dot{H}_h^1}^{\frac{d(p-1)}{2}}.
	\end{align*}
Since $\frac{d(p-1)}{2}<2$, it proves that $\|u_h(t)\|_{\dot{H}_h^1}$ is bounded uniformly in $h\in(0,1]$. Therefore, in both cases, the a priori bound allows to iterate \eqref{uniform bound on I} with the uniform size of intervals so that $I_{max}=\mathbb{R}$.
\end{proof}

\begin{rem}
In our analysis, high dimensions $d\geq 4$ are not included due to lack of admissible pairs. In \eqref{not high d}, the admissible pair $(\frac{4}{d-2(1-\delta))},\frac{4d}{6(1-\delta)-d})$ is employed, however $\frac{4}{d-2(1-\delta))}$ is less than 2 when $d\geq 4$.
\end{rem}

\section{Strichartz Estimates for Discrete Klein-Gordon Equations \\ (Proof of Theorem \ref{thm: Strichartz for KG})}
For discrete Klein-Gordon equation, it behaves like Schr\"odinger equation near the origin in the fourier side and we indeed get the same derivative loss as Schr\"odinger case. But in the large frequency region it is much like a wave equation. Thus higher regularity loss is required to compensate weak dispersion. As shown before, the Strichartz estimates follows from the dispersive estimates. 
\begin{prop}
Let $h\in(0,1]$. Then for any dyadic number $N\in2^{\Z}$ with $N\le1$, we have
\begin{equation}\label{dispersive:Klein}
\| e^{it\sqrt{1-\Delta_h}}P_N u_0\|_{L_h^\infty} \le C t^{-\frac13} (\frac Nh)^{\frac13} \big( 1+ \frac Nh \big)
\|u_0\|_{L_h^1}.
\end{equation}
\end{prop}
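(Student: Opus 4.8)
The plan is to follow the proof of the frequency-localized dispersive estimate for the Schr\"odinger equation (Proposition~\ref{dispersive estimate for schrodinger}): reduce the $L_h^1\to L_h^\infty$ bound to an $L_h^\infty$ bound on a convolution kernel, and then estimate the resulting one-dimensional oscillatory integral by van der Corput's lemma. By the lattice Fourier transform, $e^{it\sqrt{1-\Delta_h}}P_N u_0=K_{N,t}\ast u_0$ with
$$K_{N,t}(x)=\frac{1}{2\pi}\int_{\mathbb{T}_h}e^{i(t\,\omega_h(\xi)+x\xi)}\,\psi(\tfrac{h\xi}{N})\,d\xi,\qquad \omega_h(\xi):=\sqrt{1+\tfrac{4}{h^2}\sin^2\tfrac{h\xi}{2}},$$
so by Young's inequality it suffices to prove $\|K_{N,t}\|_{L_h^\infty}\ls |t|^{-1/3}(\tfrac Nh)^{1/3}(1+\tfrac Nh)$. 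Since $\psi(\tfrac{h\cdot}{N})$ is supported on $\{|\xi|\sim N/h\}$, a set of measure $\sim N/h$, the trivial bound $\|K_{N,t}\|_{L_h^\infty}\ls N/h$ is always available; moreover the phase $\Phi_t(\xi):=t\,\omega_h(\xi)+x\xi$ satisfies $\Phi_t^{(k)}=t\,\omega_h^{(k)}$ for $k\ge 2$, so all van der Corput bounds below will be uniform in $x$.

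The heart of the matter is the behaviour of $\omega_h''$. A direct computation gives
$$\omega_h''(\xi)=\frac{4\cos h\xi-\tfrac{4}{h^2}(1-\cos h\xi)^2}{4\,(1+\tfrac{4}{h^2}\sin^2\tfrac{h\xi}{2})^{3/2}}.$$
The numerator is strictly increasing in $\cos h\xi$ and hence strictly decreasing in $\xi$ on $(0,\pi/h)$, so it vanishes at a single inflection point $\xi_\ast$; a Taylor expansion for small $h\xi$ shows $(1-\cos h\xi)^2\sim \tfrac{h^4}{4}\xi^4$, whence $\xi_\ast\sim h^{-1/2}$. Thus $\omega_h''>0$ for $|\xi|<\xi_\ast$, $\omega_h''<0$ for $|\xi|>\xi_\ast$, and the zero is transversal with $|\omega_h'''(\xi_\ast)|\gtrsim h^2$. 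Away from $\xi_\ast$ one has $|\omega_h''(\xi)|\gtrsim (1+|\xi|)^{-3}\sim(1+\tfrac Nh)^{-3}$ on each annulus $|\xi|\sim N/h$ not meeting $\xi_\ast$: this reproduces the continuum Klein--Gordon decay $(1+\xi^2)^{-3/2}$ for $N/h\ls 1$, and the estimate $|\omega_h''|\sim h^2(N/h)\ge (N/h)^{-3}$ in the wave-like regime $\xi_\ast\ls|\xi|\ls 1/h$.

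With these bounds the estimate splits into two regimes. When the annulus $|\xi|\sim N/h$ stays away from $\xi_\ast$, van der Corput's lemma of order two gives $\|K_{N,t}\|_{L_h^\infty}\ls\big(|t|\,(1+\tfrac Nh)^{-3}\big)^{-1/2}$, the total variation of the amplitude $\psi(\tfrac{h\cdot}{N})$ being $O(1)$; interpolating against the trivial bound $N/h$ through $\min(A,B)\le A^{2/3}B^{1/3}$ bounds this by $|t|^{-1/3}(1+\tfrac Nh)(\tfrac Nh)^{1/3}$, as required. For the finitely many dyadic scales $N\sim h^{1/2}$ whose annulus meets $\xi_\ast$, one subdivides: on the short interval around $\xi_\ast$ where $\omega_h''$ is small, the transversal bound $|\omega_h'''|\gtrsim h^2\sim(\tfrac Nh)^{-1}(1+\tfrac Nh)^{-3}$ lets van der Corput's lemma of order three give $\|K_{N,t}\|_{L_h^\infty}\ls(|t|\,h^2)^{-1/3}$, which coincides with $|t|^{-1/3}(\tfrac Nh)^{1/3}(1+\tfrac Nh)$ at this scale, while on the remaining part order two plus interpolation is controlled as before. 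Combining the two regimes proves \eqref{dispersive:Klein}, after which the Strichartz estimate of Theorem~\ref{thm: Strichartz for KG} follows from the Keel--Tao machinery and the Littlewood--Paley summation exactly as in the Schr\"odinger case.

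The main obstacle is the degenerate inflection point $\xi_\ast\sim h^{-1/2}$, where $\omega_h''$ vanishes and the dispersion genuinely weakens from $|t|^{-1/2}$ to $|t|^{-1/3}$; the crux is to verify that this vanishing is simple and to pin down the matching lower bound $|\omega_h'''(\xi_\ast)|\gtrsim h^2$, which is precisely what forces the extra factor $1+\tfrac Nh$ (the ``wave-like'' loss encoded by $\langle\nabla_h\rangle$ in Theorem~\ref{thm: Strichartz for KG}) beyond the Schr\"odinger-type gain $(\tfrac Nh)^{1/3}$. A secondary technical point is to confirm that these one-dimensional van der Corput estimates hold uniformly in $x$ and that the off-$\xi_\ast$ annuli indeed satisfy the stated second-derivative lower bound throughout, so that the dyadic sum reconstructing the Strichartz norm converges.
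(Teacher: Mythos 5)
Your proof is correct, and it takes a genuinely different route from the paper's. Both arguments reduce \eqref{dispersive:Klein} to an $L_h^\infty$ bound on the convolution kernel via Young's inequality, and both hinge on the same degenerate point: your $\xi_*$ is the paper's $\xi_h$, defined by $\cos(h\xi_h)=1-\frac{h}{h+\sqrt{h^2+2}}$, which indeed sits at scale $h^{-1/2}$. But the paper splits by \emph{frequency}: for $4N/h\le 1$ it proves $|\varphi_t''|\gtrsim |t|$ (the degenerate point being excluded since $|\xi_h|>1$) and then recycles the Schr\"odinger argument (interpolation plus Bernstein through $\tilde P_N$) to convert $|t|^{-1/2}$ into $|t|^{-1/3}(N/h)^{1/3}$; for $4N/h\ge 1$ it abandons the second derivative entirely and runs order-three van der Corput on the whole annulus, using $|\varphi_t^{(3)}|\gtrsim |t||\xi|^{-4}$ to get $|t|^{-1/3}(N/h)^{4/3}$ in one shot. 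You instead split by \emph{distance to the inflection point}: order-two van der Corput with $|\omega_h''|\gtrsim(1+|\xi|)^{-3}$ on every annulus separated from $\xi_*$, upgraded to the stated bound by the elementary kernel-level interpolation $\min(A,B)\le A^{2/3}B^{1/3}$ against the trivial bound $N/h$ (this replaces the paper's Bernstein step), and order-three van der Corput only for the $O(1)$ scales $N\sim h^{1/2}$. Your route buys something real: near the zone edge $|h\xi|=\pi$ the factor $\sin(h\xi)$ in $\varphi_t^{(3)}$ vanishes, so the paper's claimed uniform lower bound $|\varphi_t^{(3)}|\gtrsim|t||\xi|^{-4}$ actually fails for the top dyadic scales, whereas your second-derivative bound is non-degenerate there ($|\omega_h''|\sim h$ at the edge) and covers them; in this respect your argument repairs a soft spot in the paper's proof. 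One point to tighten: for an annulus close to, but not containing, $\xi_*$, one only has $|\omega_h''|\gtrsim h^2\,\mathrm{dist}(\xi,\xi_*)$, so ``not meeting $\xi_*$'' should mean ``separated from $\xi_*$ by a fixed multiplicative factor''; for the remaining $O(1)$ scales the cleanest fix is to apply order-three van der Corput on the \emph{whole} annulus, which is legitimate since $|\omega_h'''|\sim h^2$ uniformly when $|\xi|\sim h^{-1/2}$, or else to optimize the length of your central interval to $\rho\sim(|t|h^2)^{-1/3}$; either way your stated bounds come out.
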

\begin{proof}
By the Fourier transform, a solution to a discrete Klein-Gordon equation is represented as 
\begin{equation}\label{eq}
e^{it\sqrt{1-\Delta_h}} P_N u_0 (x)=(I_{N,t}*u_0)(x)
\end{equation}
for all $x\in\mathbb{Z}_h^d$, where
\begin{equation*}
I_{N,t}(x):=\frac{1}{(2\pi)^d} \int_{\mathbb T_h} e^{-i\varphi_t(\xi)}\psi(\tfrac{h\xi}{N}) d\xi,
\end{equation*}
where
$$\varphi_t(\xi):=t\sqrt{1+\frac{4}{h^2}\sin^2 \frac{h\xi}{2}}-x\xi.$$	
We observe that
\begin{align*}
\varphi_t''(\xi) = \frac{th^{-2}}{\sqrt{1+\frac{4}{h^2}\sin^2(\frac{h\xi}{2})}^3}
\Big(-\cos^2{(h\xi)} +(h^2+2)\cos{(h\xi)}-1 \Big),
\end{align*}	
and so the second order derivative is degenerate if and only if $\cos{(h\xi)}=1-\frac{h}{h+\sqrt{h^2+2}}$. We denote this degenerate point by $\xi_h$. By Taylor's expansion, we have 
$$\cos h > 1-\frac{h^2}{2}> 1-\frac{h}{h+\sqrt{h^2+2}}=\cos{(h\xi_h)},$$ which implies that $1<|\xi_h| $.
	
We first consider the case $\frac{4N}{h}\le1$. In this support we have $|\xi|<1$, where the degenerate point $\xi_h$ is excluded so the lower bound on the second derivative can be obtained.
Since $|\xi|\le 1$ we estimate using Taylor expansion
\begin{align*}
-\cos^2{(h\xi)} +(h^2+2)\cos {(h\xi)}-1
&\ge -\cos^2 h +(h^2+2)\cos h-1  \\
&> 1-\cos^2h-\frac{h^4}{2}=h^2+O(h^4).
\end{align*}
And it holds $ \sqrt{1+\frac{4}{h^2}\sin^2(\frac{h\xi}{2})} \ls 1$.
From these two inequalities we obtain
\begin{equation}\label{lb}
|\varphi''(\xi)|\gtrsim |t|,
\end{equation}
which is the same bound as the Schr\"odinger case \eqref{Hessian lower bound} as expected. Then by the same argument below \eqref{Hessian lower bound} we obtain
$$
\| e^{it\sqrt{1-\Delta_h}}P_N u_0\|_{L_h^\infty} \le C t^{-\frac13} (\frac Nh)^{\frac13} 
\|u_0\|_{L_h^1},
$$
which implies \eqref{dispersive:Klein}.

Next we consider the case $\frac{4N}{h}\ge1$, where it holds $|\xi|\gtrsim 1$.
So, in this support the degenerate point $\xi_h$ can be contained. We find the third derivative of $\varphi_t$ 
\begin{align*}
\varphi_t^{(3)}(\xi)= \frac{-th^{-3}\sin{(h\xi)}}{\sqrt{1+\frac{4}{h^2}\sin^2(\frac{h\xi}{2})}^5}
\Big(\cos^2{(h\xi)} -(h^2+2)\cos{(h\xi)} + h^4+4h^2+1 \Big).
\end{align*}
Note that for all $\xi\in \mathbb{T}_h$ it holds
\begin{align*}
& \cos^2{(h\xi)} -(h^2+2)\cos{(h\xi)} + h^4+4h^2+1 \gtrsim h^2, \\
& \sqrt{1+\frac{4}{h^2}\sin^2(\frac{h\xi}{2})} \ls |\xi|,
\end{align*}
where in the first line it attains minimum value $3h^2+h^4$ at $\cos{(h\xi)}=1$.
Using these, we find the low bound on the third derivative 
\begin{align*}
| \varphi_t^{(3)}(\xi) | \gtrsim |t| |\xi|^{-4},
\end{align*}
which implies by the Van der Corput Lemma,
\begin{align*}
|I_{N,t}(x)| \ls t^{-\frac13} \big( \frac Nh \big)^{\frac43}.
\end{align*}
From this we obtain \eqref{dispersive:Klein} by applying Young's convolution inequality to \eqref{eq}.
\end{proof}

\appendix

\section{Appendix}
In this appendix we consider the optimality of Theorem~\ref{thm: Strichartz for Schrodinger}.
The next proposition says that $(q,r)$ range and loss of derivative in Theorem~\ref{thm: Strichartz for Schrodinger}  can not be improved. 
We prove sharpness by adapting the standard `Knapp' example. 
We modify the example to make it applicable in our setting, i.e., to be defined on $\Z_h^d$. And then we compute its norm and observe how it depends on $h>0$ when $h$ goes to zero.
Our proof follows the mainstream of \cite[Proposition~1]{SK2005}.
\begin{prop}\label{Prop:optimal}
	Suppose for some $s\in \R$ and $2\le p,q\le \infty$ we have
	\begin{align}\label{strichartz2}
	\| e^{it\Delta_h} f \|_{L_t^q L_h^r}
	&\ls \| f \|_{\dot{H}_h^{s}}.
	\end{align} Then it should hold $\frac d2\ge \frac3q+\frac dr$ and $s\ge \frac1q$.	
\end{prop}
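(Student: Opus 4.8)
The plan is to exhibit a single family of \emph{Knapp} wave packets $f=f_{h,\delta}$ concentrated in frequency near the most degenerate point of the phase, compute both sides of \eqref{strichartz2}, and then read off the two necessary conditions by sending, respectively, the frequency scale $\delta\to0$ (with $h$ fixed) and the mesh size $h\to0$ (with $\delta$ fixed). Recall from \eqref{oscillatory integral} that, writing the dispersion symbol $\omega_h(\xi):=\frac4{h^2}\sum_{j}\sin^2(\frac{h\xi_j}{2})=\frac2{h^2}\sum_j(1-\cos h\xi_j)$, the propagator has phase $x\cdot\xi-t\,\omega_h(\xi)$, whose Hessian degenerates exactly when some $\xi_j=\pm\frac{\pi}{2h}$, the slowest decay $|t|^{-d/3}$ in \eqref{SK dispersion estimate} occurring at the corner $\xi^\ast:=\frac{\pi}{2h}(1,\dots,1)$ where all $d$ directions degenerate at once. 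I would therefore take $\hat f=\mathbf 1_R$ (smoothed) with $R:=\xi^\ast+[-\delta,\delta]^d\subset\tph$, where $0<\delta\le c/h$ for a small absolute constant $c$.

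Computing the right-hand side is immediate: since $|\xi|\sim h^{-1}$ on $R$ and $|R|\sim\delta^d$, Parseval gives $\|f\|_{\dot H_h^s}\sim h^{-s}\delta^{d/2}$. For the left-hand side I would expand near the corner: putting $\xi=\xi^\ast+\eta$ and using $1-\cos(h\xi_j)=1+\sin(h\eta_j)$, the quadratic term vanishes and the $\eta$-dependent part of the phase becomes
\[
\sum_{j=1}^d\Big(x_j-\tfrac{2t}{h}\Big)\eta_j+\frac{th}{3}\sum_{j=1}^d\eta_j^3+O(th^3\delta^5).
\]
Choosing the time window $t\in[T,2T]$ with $T\sim(h\delta^3)^{-1}$ makes the cubic term $O(1)$, while the constraint $h\delta\lesssim1$ renders the remainder negligible; at each such $t$ the linear term is $O(1)$ on the spatial cube $C_t$ of side $\sim\delta^{-1}$ centered at the group velocity $(\tfrac{2t}{h},\dots,\tfrac{2t}{h})$, which carries measure $|C_t|\sim\delta^{-d}$. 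On this region $|e^{it\Delta_h}f|\gtrsim\delta^d$, so a pointwise-in-$t$ lower bound (which sidesteps tracking the moving center) yields
\[
\|e^{it\Delta_h}f\|_{L_t^qL_h^r}\ \gtrsim\ \delta^{d}\,|C_t|^{1/r}\,T^{1/q}\ \sim\ h^{-1/q}\,\delta^{\,d/r'-3/q}.
\]

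Feeding these estimates into the hypothesis \eqref{strichartz2} and rearranging gives the single scalar inequality
\[
h^{\,s-\frac1q}\ \lesssim\ \delta^{\,\frac3q+\frac dr-\frac d2},
\]
valid uniformly over all admissible pairs $(h,\delta)$ with $h\delta\lesssim1$ and $\delta^{-1}\gg h$. I then specialize in two ways. Fixing $h$ and letting $\delta\to0^+$, the left side is a fixed positive constant while the right side tends to $0$ unless its exponent is nonpositive; this forces $\frac d2\ge\frac3q+\frac dr$, the lattice analogue of the scaling obstruction recovering the range \eqref{S-admissible}. Fixing $\delta$ and letting $h\to0^+$, the right side is a fixed positive constant while the left side blows up unless $s-\frac1q\ge0$; this forces $s\ge\frac1q$, the genuinely $h$-dependent obstruction pinning down the derivative loss. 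Both specializations are legitimate since the constraints $h\delta\lesssim1$ and $\delta^{-1}\gg h$ are preserved in each limit.

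The main obstacle is the phase analysis underlying the lower bound: verifying that over the long window $T\sim(h\delta^3)^{-1}$ the cubic term genuinely dominates and the higher-order remainder $O(th^3\delta^5)$ stays $O(1)$ \emph{uniformly} in $h$ and $\delta$, and that the cube $C_t$ contains $\sim(\delta h)^{-d}$ lattice points so that indeed $|C_t|\sim\delta^{-d}$. Everything else is bookkeeping; the conceptual point to emphasize is that the two necessary conditions are produced not by one limit but by two distinct degenerations of the same packet — shrinking the frequency cap ($\delta\to0$) exposes the admissible range, while refining the mesh ($h\to0$) exposes the derivative loss.
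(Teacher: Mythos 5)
Your construction is correct, and it is the same core idea as the paper's: a Knapp example concentrated at the fully degenerate corner $\xi^\ast=\frac{\pi}{2h}(1,\dots,1)$, with exactly the scalings the paper uses (your frequency width $\delta$ corresponds to their $\epsilon h^{-1}$, your time scale $T\sim(h\delta^3)^{-1}$ to their temporal thickness $\epsilon^3h^{-2}$, both reflecting the cubic degeneracy of $\sin^2$ at $\pi/4$). The execution differs, though, in a way worth noting. The paper first dualizes: \eqref{strichartz2} is converted to the adjoint bound $\|\int e^{it\Delta_h}|\nabla_h|^{-s}f(t)\,dt\|_{L_h^2}\lesssim\|f\|_{L_t^{q'}L_h^{r'}}$, and the Knapp example is prescribed on the space--time Fourier side as a product of indicator functions adapted to the characteristic surface $\tau=-\omega_h(\xi)$ near $\xi^\ast$. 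Both sides of the dual estimate are then computed essentially exactly --- the left side by Plancherel, the right side via explicit $\frac{\sin(\cdot)}{(\cdot)}$ kernels --- so the paper never needs a pointwise lower bound on an oscillatory integral; the two necessary conditions fall out simultaneously by comparing the exponents of the two independent parameters $\epsilon$ and $h$ in a single inequality. You instead stay on the primal side and prove $|e^{it\Delta_h}f|\gtrsim\delta^d$ on a moving spatial cube over $[T,2T]$, which requires the phase-variation bookkeeping you flag (linear, cubic, and remainder terms all small after adjusting constants --- this does go through under $h\delta\lesssim1$), and you extract the two conditions by two separate degenerations. Your route buys two things: it is self-contained (no duality step), and it makes transparent that the admissibility condition $\frac d2\ge\frac3q+\frac dr$ is necessary even for $h$-dependent constants (fixed $h$, $\delta\to0$) while only the derivative loss $s\ge\frac1q$ genuinely requires uniformity in $h$. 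The paper's route buys rigor at lower cost: the "main obstacle" you identify simply never arises, because the dual computation replaces the stationary-phase lower bound with exact Fourier identities.
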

\begin{proof}
	By duality argument, \eqref{strichartz2} is equivalent to
	\begin{align}\label{dual}
	\Big\| \int e^{it\Delta_h} |\nabla_h|^{-s}f(t) dt \Big\|_{L_h^2}
	\ls \|f\|_{L_t^{q'}L_h^{r'}},
	\end{align}
	where $(q',r')$ is the H\"older conjugate of $(q,r)$.
	Applying Plancherel's theorem we compute the left side
	\begin{align*}
	\Big\| \int e^{it\Delta_h}|\nabla_h|^{-s}f (t)dt\Big\|_{L_h^2}
	&= \Big\| \int e^{it\frac{4}{h^2}\sum_{i=1}^d\sin^2(\frac{h\xi_i}{2})} |\xi|^{-s}\widehat{f}(t,\xi)  dt \Big\|_{L_\xi^2(\tph)} \\
	&=\Big\| |\xi|^{-s} \mathcal{F}_{t,x} f
	\big(-\sum_{i=1}^d\frac{4}{h^2}\sin^2(\frac{h\xi_i}{2}) ,\xi\big)  \Big\|_{L_\xi^2(\tph)}.
	\end{align*}
	 Now we choose $f$ as the 'Knapp' example, that is, 
	\begin{align*}
	\mathcal{F}_{t,x}f(\tau,\xi):=
	\mathbf{1}_{(-1,1)}\bigg( \epsilon^{-3}h^2 \Big(\tau+\frac{d(2-\pi)}{h^2}+\frac{2}{h}\sum_{i=1}^d\xi_i\Big)\bigg)
	\prod_{i=1}^{d}\mathbf{1}_{(-1,1)}\bigg(\epsilon^{-1}(\frac{h\xi_i}{2}-\frac{\pi}{4})\bigg).
	\end{align*}
By Taylor expansion for $\sin^2 y$ around $y=\frac{\pi}{4}$, it holds that 
\begin{align*}
\Big| -\sum_{i=1}^d\sin^2(\frac{h\xi_i}{2})
+\Big( \frac{d}{2} + \sum_{i=1}^d(\frac{\pi}{4}-\frac{h\xi}{2}) \Big) \Big| \le C\big|\frac{\pi}{4}-\frac{h\xi}{2}\big|^3,
\end{align*}
where the constant $C>0$ is independent of $h$.
Thus if $|\frac{h\xi_i}{2}-\frac{\pi}{4}|\le \epsilon$ for $i=1,2,\cdots,d$ we have
	\begin{align*}
	\Big| -\sum_{i=1}^d\frac{4}{h^2}\sin^2(\frac{\xi_i}{2})+\frac{d(2-\pi)}{h^2}+\frac{2}{h}\sum_{i=1}^d\xi_i \Big|\le Ch^{-2}\epsilon^3,
	\end{align*}
which implies
\begin{align*}
\mathcal{F}_{t,x} f
\Big(-\sum_{i=1}^d\frac{4}{h^2}\sin^2(\frac{h\xi_i}{2}) ,\xi\Big)
\approx \prod_{i=1}^{d}\mathbf{1}_{(-1,1)}\Big(\epsilon^{-1}h(\frac{\xi_i}{2}-\frac{\pi}{4h} ) \Big).
\end{align*}	
Inserting this example gives
	\begin{align}\label{leftnorm}
	\Big\| \int e^{it\Delta_h} |\nabla_h|^{-s}f (t)dt \Big\|_{L_h^2}
	\sim h^{s}(\ep h^{-1})^\frac{d}{2}.
	\end{align}
Next we consider the right side of \eqref{dual}. We represent it in terms of $f$ using inversion formula and compute the Fourier transform of characteristic function 
	\begin{align*} 
	| f(t,x) |
	&= \Big| \frac{1}{(2\pi)^d} \int_{\tph} \int_{\R} e^{it\tau} e^{ix\cdot\xi}\mathcal{F}_{t,x}f(\tau,\xi) d\tau  d\xi \Big| \\
	&=C \Big| \frac{\sin(\ep^3h^{-2}t)}{t}\Big| \cdot \prod_{i=1}^d \Big|
	\frac{ \sin\big(\epsilon h^{-1}(x_i-\frac{2t}{h})\big) }{x_i-\frac{2t}{h}} \Big|,
	\end{align*}
whenever $\epsilon h^{-2}\le  \frac{\pi}{2}$. Here we denote $x=(x_1,\cdots,x_d)\in \Z_h^d$ and the constant $C$ is independent of $h$ and $\ep$.
Note that for $1\le p<\infty$ we have
	\begin{align*}
	h\sum_{x_i\in \Z_h} \frac{ |\sin(\delta(x_i-y))|^p }{|x_i-y|^p}
	&= h\sum_{|x_i-y|\le\delta^{-1}}\frac{ |\sin(\delta(x_i-y))|^p }{|x_i-y|^p}
	+ h\sum_{|x_i-y|\ge\delta^{-1}}\frac{ |\sin(\delta(x_i-y))|^p }{|x_i-y|^p}  \\
	&\ls h\sum_{|x_i-y|\le\delta^{-1}}\delta^p+h\sum_{|x_i-y|\ge\delta^{-1}}\frac{1}{|x_i-y|^p}
	\ls \delta^{p-1},
	\end{align*}
	where the implicit constants are independent of $h>0$ and $y\in\R$.
	Using this, we estimate 
	\begin{align}
	\begin{aligned}\label{rightnorm}
	\|f\|_{L_t^{q'}L_h^{r'}(\Z_h^d)} 
	&= \bigg\|   \frac{\sin(\ep^3h^{-2}t)}{t} \cdot \Big\| \prod_{i=1}^d \big|
	\frac{ \sin\big(\epsilon h^{-1}(x_i-\frac{2t}{h})\big) }{x_i-\frac{2t}{h}} \big|  \Big\|_{L_h^{r'}(\Z_h^d)} \bigg\|_{L_t^{q'}} \\
	&\ls (\epsilon h^{-1})^{d(1-\frac{1}{r'})} \Big\| \frac{\sin(\ep^3h^{-2}t)}{t}  \Big\|_{L_t^{q'}} \\
	&\ls (\epsilon h^{-1})^{d(1-\frac{1}{r'})} (\epsilon
	^3 h^{-2})^{(1-\frac{1}{q'})}.
	\end{aligned}\end{align}
	Thus if \eqref{dual} holds true, it should be satisfied by \eqref{leftnorm} and \eqref{rightnorm} 
	\begin{align*}
	h^{-\frac d2+s} \ep^{\frac d2} \ls h^{-2-d+\frac{d}{r'}+\frac{2}{q'}}\epsilon^{3(1-\frac{1}{q'})+d(1-\frac{1}{r'})}.
	\end{align*}
	Letting $\ep$ and $h$ go to zero with $\epsilon h^{-2}\le  \frac{\pi}{2}$, we obtain
	\begin{align*}
	\frac2d \ge 3(1-\frac{1}{q'})+d(1-\frac{1}{r'}) , \quad 
	2-\frac d2+s \ge -d+\frac{d}{r'}+\frac{2}{q'}, 
	\end{align*}
	which implies the desired result.
\end{proof}

\bibliographystyle{amsplain}
\bibliography{uniformStrichartz}

\end{document}